\documentclass[a4paper]{article}
\usepackage[top=3.5cm,bottom=3.5cm,right=3cm,left=3cm]{geometry}
\usepackage[T1]{fontenc}

\usepackage{amsfonts}
\usepackage{amsmath}
\usepackage{amssymb}
\usepackage{amscd}
\usepackage{amsthm}
\usepackage{mathrsfs}

\usepackage[all]{xy}

\def\Z{{\mathbb{Z}}}

\newtheorem{theorem}{Theorem}[subsection]
\newtheorem{lemma}{Lemma}[subsection]
\newtheorem{proposition}{Proposition}[subsection]

\newtheorem{cor}{Corollary}

\newtheorem{defi}{Definition}

\begin{document}

\date{}
\title{\large{A FORMAL APPROACH  ``À LA NEUKIRCH'' OF   $\ell$-ADIC CLASS FIELD THEORY}}

\author{{\bf Stéphanie Reglade}}

\maketitle

{\footnotesize \noindent\textbf{Abstract}: \textit{Neukirch developed abstract class field theory in his famous book ``Class Field Theory''. We show that it is possible to derive Jaulent's $\ell$-adic class field from Neukirch's framework. The proof requires in both cases (local case and global case) to define suitable degree maps,  $G$-modules,  valuations and to prove the class field axiom.}}
\medskip

\noindent \textbf{Key words}: \textbf{class field theory, $\ell$-adic class field theory.}
\medskip

\noindent \textbf{AMS Classification: 11R37}

\tableofcontents

\bigskip\bigskip

\noindent\textbf{\large Introduction}: \medskip

\noindent The $\ell$-adic class field theory, developed by Jaulent \cite{Ja1}, claims, in the local case, the existence of an 
isomorphism between the Galois group of the maximal and abelian pro-$\ell$-extension of a finite extension $K_{\mathfrak{p}}$ of $\mathbb{Q}_{\mathfrak{p}}$ and the $\ell$-adification of the multiplicative group of this local field; in the global case the existence of an isomorphism between the Galois group of the maximal and abelian pro-$\ell$-extension of a number field $K$ and the $\ell$-adification of the group of ideles. 

\noindent{Our goal in this paper is to rederive this theory, following Neukirch's abstract framework. It requires to define the degree map, the $G$-module and the valuation in the local and in the global case. We will have to check that the valuations are henselian with respect to the degree map, and to prove in each case the class field axiom. }

\noindent{We start with the local case in \S 2: we define suitable cohomology groups $\textrm{H}^{i}_{\ell}(G,V)$ in \S 1. After recalling the  keypoints of Neukirch's abstract therory in \S 2.1, we define} the group $G$, the $G$-module,the degree map $ \textrm{deg}: G \mapsto \mathbb{Z}_{\ell}$ and the valuation 
in  \S 2.2 to 2.4. Our main result  is the class field axiom:

\noindent{\textbf{Theorem}}
\noindent{\emph{For all cyclic $\ell$-extension $L_{\mathfrak{P}}$ of a local field $K_{\mathfrak{p}}$ we have
$$  |\textrm{H}^i_{\ell}(G(L_{\mathfrak{P}} / K_{p}),  \mathcal{R}_{L_{\mathfrak{P}}})|= \left\{
    \begin{array}{ll}
     [L_{\mathfrak{P}}:K_{p}] & \mbox{ for } i=0  \\
      1 & \mbox{ for } i=1
    \end{array}
\right.$$}}

\noindent{We then treat  the global case, our main result is: }
\smallskip

\noindent{\textbf{Theorem}}
\noindent{\emph{Let $L/K$ be a cyclic $\ell$-extension of algebraic number fields then we have:
\begin{center}
$  |\textrm{H}^{i}_{\ell}(G(L / K),  \mathcal{C}_{L})|= \left\{
    \begin{array}{ll}
     [L:K] & \mbox{ for } i=0  \\
      1 & \mbox{ for } i=1
    \end{array}
\right.$
\end{center}
}}
\smallskip

\noindent{The proof requires first to compute the Herbrand quotient of the idele class $\ell$-group (theorem 3.2.1). }We again define $G$, the $G$-module, $ \textrm{deg}: G \mapsto \mathbb{Z}_{\ell}$ and the valuation 
in \& 3.3 to 3.5.

\section{Preliminary}

\subsection{Notations}

\noindent{In the following $\ell$ is a fixxed rational prime number. Let's introduce the notations: }
\smallskip

\noindent{\textit{For a local field $K_{\mathfrak{p}}$ with maximal ideal $\mathfrak{p}$ and uniformizer $\pi_{\mathfrak{p}}$, we let}

$\mathcal{R}_{K_{\mathfrak{p}} }=\varprojlim_{k}  K_{\mathfrak{p}}^{\times} \diagup {  K_{\mathfrak{p}}^{\times  \ell^{k}}}$:  
the $\ell$-adification of the multiplicative group of a local field
\smallskip

$\mathcal{U}_{K_{\mathfrak{p}} }=\varprojlim_{k} {U}_{\mathfrak{p}}  \diagup  U_{\mathfrak{p}}^{\ell^k}$: the $\ell$-adification  of the group of units $U_{\mathfrak{p}} $  of $K_{\mathfrak{p}}$
\smallskip

$U_{\mathfrak{p}}^{1} $: the group of principal units of  $K_{\mathfrak{p}}$
\smallskip

$\mu_{\mathfrak{p}}^{0}$: the subgroup of $ U_{\mathfrak{p}}$, whose order is  finite and prime to $\mathfrak{p}$
\smallskip

$\mu_{\mathfrak{p}}$: the $\ell $- Sylow subgroup of $\mu_{\mathfrak{p}}^{0}$

\medskip

\noindent{\textit{For a number field $K$ we define }}

$\mathcal{R}_{K}=\mathbb{Z}_{\ell} \otimes_{\mathbb{Z}} K^{\times}$ : the
$\ell$-adic group of principal ideles
\smallskip

 $\mathcal{J}_{K} =\prod_{ \mathfrak{p} \in Pl_{K} } ^{res} \mathcal{R}_{K_{\mathfrak{p}}}$ : the $\ell$-adic idele group
\smallskip

$\mathcal{U}_{K}=\prod_{ \mathfrak{p} \in Pl_{K} } \mathcal{U}_{K_{\mathfrak{p}}}$ 
: the subgroup of units
\smallskip

$\mathcal{C}_{K}= \mathcal{J}_{K} /  \mathcal{R}_{K}$ : the $\ell$-adic idele class group 
\smallskip

\setlength{\parskip}{5pt plus2pt minus1pt}

\subsection{The $\mathbb Z_\ell$-cohomology}

\noindent{We use the following cohomology for $\mathbb{Z}_{\ell}$-modules. }

\begin{defi}
Let $F_n \longrightarrow \cdots \longrightarrow F_{0} \longrightarrow \mathbb{Z_{\ell}} \longrightarrow 0$ be a projective resolution of $\mathbb{Z}_{\ell}[G]$-modules, where $G$ is a $\ell$-group. Applying the functor $\textrm{Hom}_{G}(.,\mathbb{Z}_{\ell} \otimes A)$ we obtain:
\begin{displaymath}
0 \longrightarrow\textrm{Hom}_{G}(\mathbb{Z}_{\ell},\mathbb{Z}_{\ell} \otimes A) \longrightarrow\textrm{Hom}_{G}( F_{0},\mathbb{Z}_{\ell} \otimes A) \longrightarrow\textrm{Hom}_{G}( F_{1},\mathbb{Z}_{\ell} \otimes A) \longrightarrow \cdots 
\end{displaymath}
\begin{displaymath}
\cdots\longrightarrow\textrm{Hom}_{G}(F_{n-1},\mathbb{Z}_{\ell} \otimes A) \overset{\delta^{'}_{n-1}}{\longrightarrow}\textrm{Hom}_{G}( F_{n},\mathbb{Z}_{\ell} \otimes A)  \overset{\delta^{'}_{n}}{\longrightarrow}\textrm{Hom}_{G}( F_{n+1},\mathbb{Z}_{\ell} \otimes A) \cdots
\end{displaymath}
We denote $\textrm{H}^{n}_{\ell}(G,\mathbb{Z}_{\ell} \otimes A):= \textrm{Ker}\delta^{'}_{n} / \textrm{Im}\delta^{'}_{n-1}$.
\end{defi}

\begin{theorem}
If $G$ is a  $\ell$-group, and $A$ a $G$-module then: 
\begin{center}
$ \textrm{H}^{i}_{\ell}(G, \mathbb{Z}_{\ell} \otimes A)= \mathbb{Z}_{\ell} \otimes \textrm{H}^{i}(G,A)$
\end{center}
\end{theorem}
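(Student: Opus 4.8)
The plan is to establish the isomorphism $\mathrm{H}^i_\ell(G,\mathbb{Z}_\ell\otimes A)\cong\mathbb{Z}_\ell\otimes\mathrm{H}^i(G,A)$ by comparing the $\mathbb{Z}_\ell$-cohomology computed from a projective resolution of $\mathbb{Z}_\ell$ over $\mathbb{Z}_\ell[G]$ with ordinary group cohomology computed from a projective resolution of $\mathbb{Z}$ over $\mathbb{Z}[G]$. First I would fix a free resolution $P_\bullet\to\mathbb{Z}\to 0$ of $\mathbb{Z}$ by finitely generated free $\mathbb{Z}[G]$-modules (this exists; one may even take the standard bar resolution, though finite generation in each degree is convenient). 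Applying $\mathbb{Z}_\ell\otimes_{\mathbb{Z}}-$ gives a complex $\mathbb{Z}_\ell\otimes P_\bullet\to\mathbb{Z}_\ell\to 0$ of free $\mathbb{Z}_\ell[G]$-modules; since $\mathbb{Z}_\ell$ is flat over $\mathbb{Z}$, this is still exact, hence is a projective resolution of $\mathbb{Z}_\ell$ over $\mathbb{Z}_\ell[G]$, and by the uniqueness of projective resolutions up to homotopy it computes the same $\mathrm{H}^i_\ell$ as the resolution $F_\bullet$ in the definition.

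The second step is to identify the Hom-complexes. For a finitely generated free $\mathbb{Z}[G]$-module $P$, say $P=\mathbb{Z}[G]^r$, there is a natural isomorphism
\begin{equation*}
\mathrm{Hom}_{\mathbb{Z}_\ell[G]}(\mathbb{Z}_\ell\otimes P,\ \mathbb{Z}_\ell\otimes A)\ \cong\ \mathbb{Z}_\ell\otimes_{\mathbb{Z}}\mathrm{Hom}_{\mathbb{Z}[G]}(P,A),
\end{equation*}
because both sides are naturally identified with $(\mathbb{Z}_\ell\otimes A)^r$, compatibly with the differentials, which are given by the (transposed) boundary matrices with entries in $\mathbb{Z}[G]$. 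Thus the complex $\mathrm{Hom}_{\mathbb{Z}_\ell[G]}(\mathbb{Z}_\ell\otimes P_\bullet,\mathbb{Z}_\ell\otimes A)$ is canonically isomorphic to $\mathbb{Z}_\ell\otimes_{\mathbb{Z}}\mathrm{Hom}_{\mathbb{Z}[G]}(P_\bullet,A)$.

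Finally I would take cohomology. Since $\mathbb{Z}_\ell$ is flat over $\mathbb{Z}$, the functor $\mathbb{Z}_\ell\otimes_{\mathbb{Z}}-$ is exact and therefore commutes with the formation of kernels and cokernels, hence with passing to cohomology of a complex:
\begin{equation*}
\mathrm{H}^i\bigl(\mathbb{Z}_\ell\otimes_{\mathbb{Z}}\mathrm{Hom}_{\mathbb{Z}[G]}(P_\bullet,A)\bigr)\ \cong\ \mathbb{Z}_\ell\otimes_{\mathbb{Z}}\mathrm{H}^i\bigl(\mathrm{Hom}_{\mathbb{Z}[G]}(P_\bullet,A)\bigr)\ =\ \mathbb{Z}_\ell\otimes\mathrm{H}^i(G,A).
\end{equation*}
Combining the three steps yields the claim. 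The only delicate point is the interchange of $\mathbb{Z}_\ell\otimes-$ with $\mathrm{Hom}_{\mathbb{Z}[G]}(P,-)$ in the second step, which requires $P$ to be finitely generated (so that $\mathrm{Hom}$ out of it commutes with the tensor product); I expect this finite-generation bookkeeping — and the verification that one may indeed choose the resolution $P_\bullet$ degreewise finitely generated, which holds since $G$ is finite — to be the main thing to be careful about, everything else being flatness of $\mathbb{Z}_\ell$ and the standard comparison of resolutions.
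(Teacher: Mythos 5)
Your proposal is correct and follows essentially the same route as the paper's proof: tensor a free resolution of $\mathbb{Z}$ over $\mathbb{Z}[G]$ with the flat ring $\mathbb{Z}_\ell$, identify $\mathrm{Hom}_{G}(\mathbb{Z}_\ell\otimes F_i,\mathbb{Z}_\ell\otimes A)$ with $\mathbb{Z}_\ell\otimes\mathrm{Hom}_{G}(F_i,A)$ termwise, and use flatness again to commute $\mathbb{Z}_\ell\otimes-$ with kernels and images when passing to cohomology. Your explicit attention to degreewise finite generation of the resolution (so that $\mathrm{Hom}$ out of it commutes with the tensor product) is a point the paper's appeal to ``additivity'' leaves implicit, and it is indeed where care is needed.
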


\begin{proof}

\noindent{We start with the projective resolution of free $F_{i}$ $\mathbb{Z}[G]$-modules:}

$F_n \longrightarrow F_{n-1} \longrightarrow \cdots \longrightarrow F_{1} \longrightarrow F_{0} \longrightarrow \mathbb{Z}
\longrightarrow 0.$

\noindent{i)Applying the functor $\textrm{Hom}_{G}(.,A)$ we get: }
$$ 0 \longrightarrow \textrm{Hom}_{G}(\mathbb{Z},A)\longrightarrow \cdots 
\longrightarrow \textrm{Hom}_{G}(F_{n-1},A)  \overset{\delta_{n-1}}\longrightarrow \textrm{Hom}_{G}(F_{n},A)  \overset{\delta_{n}}\longrightarrow \textrm{Hom}_{G}(F_{n+1},A)  \longrightarrow \cdots 
$$ and $$\mathrm{H}^{n}(G,A):= \mathrm{Ker}\delta_{n} / \mathrm{Im}\delta_{n-1}.$$

\noindent{ii)Since $\mathbb{Z}_{\ell}$ is a flat module, we obtain: }

$ \mathbb{Z}_{\ell} \otimes F_n \longrightarrow \mathbb{Z}_{\ell} \otimes F_{n-1} \longrightarrow \cdots \longrightarrow  \mathbb{Z}_{\ell} \otimes F_{0} \longrightarrow \mathbb{Z}_{\ell}
\longrightarrow 0$.

\noindent{ Applying now the functor $\textrm{Hom}_{G}(.,\mathbb{Z}_{\ell} \otimes A)$  we get: }
$$0 \longrightarrow\textrm{Hom}_{G}(\mathbb{Z}_{\ell},\mathbb{Z}_{\ell} \otimes A) \longrightarrow \cdots 
 \longrightarrow \textrm{Hom}_{G}(\mathbb{Z}_{\ell} \otimes F_{n-1},\mathbb{Z}_{\ell} \otimes A) \overset{\delta^{'}_{n-1}}{\longrightarrow}\textrm{Hom}_{G}(\mathbb{Z}_{\ell} \otimes F_{n},\mathbb{Z}_{\ell} \otimes A) \overset{\delta^{'}_{n}}{\longrightarrow} \cdots $$
and $$\textrm{H}^{n}_{\ell}(G,\mathbb{Z}_{\ell} \otimes A):= \textrm{Ker}\delta^{'}_{n} / \textrm{Im}\delta^{'}_{n-1}.$$

\noindent{iii) We now show that $\textrm{Hom}_{G}(\mathbb{Z}_{\ell} \otimes F_{i},\mathbb{Z}_{\ell} \otimes A)= \mathbb{Z}_{\ell} \otimes\textrm{Hom}_{G}(G,A).$}

\noindent{The $F_{i}$ are free $\mathbb{Z}[G]$-modules, using the additivity of the functor}
$\textrm{Hom}_{G}(.,A)$ it suffices to check the property on $\mathbb{Z}[G]$. But 
\begin{center}
$ \textrm{Hom}_{G}(\mathbb{Z}[G],A) \simeq A \quad \textrm{and} \quad \textrm{Hom}_{G}(\mathbb{Z}_{\ell}[G], \mathbb{Z}_{\ell} \otimes A) \simeq \mathbb{Z}_{\ell} \otimes A.$
\end{center}

\noindent{iv) We now show that:
$\textrm{Ker}\delta^{'}_{n}=\mathbb{Z}_{\ell} \otimes  \textrm{Ker}\delta_{n} \quad  \textrm{and} \quad \textrm{Im}\delta^{'}_{n-1}= \mathbb{Z}_{\ell} \otimes  \textrm{Im}\delta_{n-1}.$}

\noindent{Indeed, given a $\mathbb{Z}$-linear map $u: \; M \longrightarrow N$ and the corresponding $u^{'}: \, \mathbb{Z}_{\ell} \otimes M \longrightarrow 
\mathbb{Z}_{\ell} \otimes N$;  we have, since $\mathbb{Z}_{\ell}$ is flat, the exact sequence: }
$$0 \longrightarrow \mathbb{Z}_{\ell} \otimes \textrm{Ker}(u) \longrightarrow  \mathbb{Z}_{\ell} \otimes M \longrightarrow
 \mathbb{Z}_{\ell} \otimes \textrm{Im}(u) \longrightarrow 0. $$
Usually we also have $\textrm{Im}(u) \subset N$ and $ \mathbb{Z}_{\ell} \otimes \textrm{Im}(u) \subset \mathbb{Z}_{\ell} \otimes N$ by flatness of $\mathbb{Z}_{\ell}$. Finally, $$\textrm{Ker}\delta^{'}_{n} /\textrm{Im}\delta^{'}_{n-1} \simeq \mathbb{Z}_{\ell} \otimes (\textrm{Ker}\delta_{n} / \textrm{Im}\delta_{n-1}).$$
\end{proof}

\begin{cor}
Let $G$ be a finite cyclic $\ell$-group, and $A$ a $G$-module then the Herbrand quotient $h_{\ell}(G, A) := \frac{\textrm{H}^{}_{\ell}(G, \mathbb{Z}_{\ell} \otimes A)} {\textrm{H}^{-1}_{\ell}(G, \mathbb{Z}_{\ell} \otimes A)}$ satisfies:

i) if $A$ is a finite $G$-module, $h_{\ell}(G,A)=1$.

ii) if we have an exact sequence of $G$-modules: $0 \longrightarrow A \longrightarrow B \longrightarrow C \longrightarrow 0$ then 
$h_{\ell}(G,B)=h_{\ell}(G,A) \cdot h_{\ell}(G,C)$.

iii) $\textrm{H}^{1}_{\ell}(G, \mathbb{Z}_{\ell} \otimes A) \simeq \textrm{H}^{-1}_{\ell}(G, \mathbb{Z}_{\ell} \otimes A)$.

\end{cor}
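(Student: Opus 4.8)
The plan is to deduce all three statements from the preceding Theorem, which identifies $\textrm{H}^{i}_{\ell}(G,\mathbb{Z}_{\ell}\otimes A)$ with $\mathbb{Z}_{\ell}\otimes\textrm{H}^{i}(G,A)$, together with the standard facts about the classical Herbrand quotient $h(G,A)$ of a finite cyclic group $G$. The only subtlety is that the Theorem as stated is about the nonnegative cohomology $\textrm{H}^{i}_{\ell}$ coming from the projective resolution of $\mathbb{Z}_{\ell}$, whereas the Herbrand quotient needs the Tate ($\widehat{\textrm{H}}$) groups in degrees $0$ and $-1$; so first I would note that for a finite cyclic $\ell$-group the complete resolution is periodic, and the same flatness argument used in the Theorem's proof applies verbatim to the complete resolution, giving $\widehat{\textrm{H}}^{i}_{\ell}(G,\mathbb{Z}_{\ell}\otimes A)\simeq\mathbb{Z}_{\ell}\otimes\widehat{\textrm{H}}^{i}(G,A)$ for all $i\in\mathbb{Z}$, in particular for $i=-1,0$. (Alternatively one invokes periodicity, $\textrm{H}^{0}_{\ell}=\widehat{\textrm{H}}^{2}_{\ell}$ and $\textrm{H}^{-1}_{\ell}=\widehat{\textrm{H}}^{1}_{\ell}$, and uses the Theorem directly.)

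With that identification in hand, for (i) I would argue that if $A$ is a finite $G$-module then $\widehat{\textrm{H}}^{i}(G,A)$ is finite for all $i$ and the classical Herbrand quotient $h(G,A)=1$; tensoring a finite abelian group with $\mathbb{Z}_{\ell}$ extracts its $\ell$-primary part, so $|\mathbb{Z}_{\ell}\otimes\widehat{\textrm{H}}^{0}(G,A)|$ and $|\mathbb{Z}_{\ell}\otimes\widehat{\textrm{H}}^{-1}(G,A)|$ are the $\ell$-parts of $|\widehat{\textrm{H}}^{0}(G,A)|$ and $|\widehat{\textrm{H}}^{-1}(G,A)|$ respectively. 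Since these two orders are equal (that is what $h(G,A)=1$ says) their $\ell$-parts are equal, whence $h_{\ell}(G,A)=1$. Actually one can say more cleanly: because $G$ is an $\ell$-group, the cohomology groups $\widehat{\textrm{H}}^{i}(G,A)$ are already $\ell$-groups (they are killed by $|G|=\ell^{n}$), so tensoring with $\mathbb{Z}_{\ell}$ changes nothing, $\textrm{H}^{i}_{\ell}(G,\mathbb{Z}_{\ell}\otimes A)=\widehat{\textrm{H}}^{i}(G,A)$ when $A$ is finite, and (i) is literally the classical statement $h(G,A)=1$.

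For (ii), given the short exact sequence $0\to A\to B\to C\to 0$, flatness of $\mathbb{Z}_{\ell}$ gives a short exact sequence $0\to\mathbb{Z}_{\ell}\otimes A\to\mathbb{Z}_{\ell}\otimes B\to\mathbb{Z}_{\ell}\otimes C\to 0$ of $G$-modules; the associated long exact sequence in the $\widehat{\textrm{H}}^{*}_{\ell}$-theory (which exists because $\textrm{Hom}_{G}(P,-)$ applied to the complete resolution $P$ of projectives is exact on short exact sequences of coefficient modules) is, by periodicity, a six-term exact hexagon in degrees $0$ and $-1$. The usual alternating-product argument on a cyclic exact hexagon then yields multiplicativity $h_{\ell}(G,B)=h_{\ell}(G,A)\,h_{\ell}(G,C)$ exactly as in the classical case — one just has to check finiteness of the relevant groups, or interpret the identity in $\mathbb{Q}^{\times}_{>0}\cup\{\infty\}$ with the convention that it holds whenever at least two of the three quotients are finite. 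For (iii), periodicity of the cohomology of a finite cyclic group gives $\widehat{\textrm{H}}^{1}\simeq\widehat{\textrm{H}}^{-1}$ canonically; applying $\mathbb{Z}_{\ell}\otimes(-)$ and the Theorem gives $\textrm{H}^{1}_{\ell}(G,\mathbb{Z}_{\ell}\otimes A)\simeq\textrm{H}^{-1}_{\ell}(G,\mathbb{Z}_{\ell}\otimes A)$. The main obstacle is purely bookkeeping: making sure the $\mathbb{Z}_{\ell}$-cohomology of Definition 1.2.1 is set up so that Tate's periodicity and the long exact sequence are available in it — which they are, since the complete resolution is obtained by splicing and $\mathbb{Z}_{\ell}$ is flat, so every structural property of classical Tate cohomology of a cyclic group transports without change.
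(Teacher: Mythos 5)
Your proof is correct, and it is worth noting that the paper itself offers no proof of this corollary at all, so there is nothing to compare it against except the intended (standard) deduction from Theorem 1.2.1 together with the classical properties of the Herbrand quotient --- which is exactly what you do. Your one substantive observation is also the right one: Definition 1.2.1 only constructs $\textrm{H}^{n}_{\ell}$ for $n\ge 0$ from a projective resolution, so $\textrm{H}^{-1}_{\ell}$ is strictly speaking undefined in the paper, and your fix (pass to the complete resolution, note that the flatness argument of Theorem 1.2.1 applies verbatim to it, and use periodicity of the Tate cohomology of a finite cyclic group) is the correct way to make the statement meaningful; the clean version of (i) via the fact that $\widehat{\textrm{H}}^{i}(G,A)$ is already a finite $\ell$-group killed by $|G|=\ell^{n}$, so that tensoring with $\mathbb{Z}_{\ell}$ is the identity, is the argument one would want to see in the paper.
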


\section{Local $\ell$-adic class field theory}

\subsection{Framework}

 The fundamental local $\ell$-adic theorem is: 
\begin{theorem}
 ~\cite[theorem 2.1]{Ja1}
Given a local field $ K_{\mathfrak{p}}/\mathbb{Q}_{\mathfrak{p}}$, the reciprocity map induces an isomorphism of topological $ \Z_{\ell}$-modules between $ \mathcal{R}_{K_{\mathfrak{p}}} = \varprojlim_{k} K_{\mathfrak{p}}^{\times} / K_{\mathfrak{p}}^{\ell^k} $ and the Galois group $ \mathcal{D}_{\mathfrak{p}} = \textrm{Gal}( K_{\mathfrak{p}}^{ab}/ K_{\mathfrak{p}})$ of the maximal and abelian pro-$\ell$-extension of $ K_{\mathfrak{p}}$.
Trough this isomorphism, the image of the inertia sub-group $\mathcal{I}_{\mathfrak{p}}$ is the sub-group of units $ \mathcal{U}_{K_{\mathfrak{p}}}$ of $ \mathcal{R}_{K_{\mathfrak{p}}}$.
The reciprocity map induces a one to one correspondence between closed sub-modules of $ \mathcal{R}_{K{\mathfrak{p}}}$ and abelian $\ell$-extensions of $ K_{\mathfrak{p}}$: in this correspondence, finite abelian $\ell$-extensions are associated to closed sub-modules of finite index of $\mathcal{R}_{K_{\mathfrak{p}}}$; it means to open sub-modules of $\mathcal{R}_{K_{\mathfrak{p}}}$.
\end{theorem}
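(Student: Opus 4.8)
The plan is to deduce Theorem 2.1.1 from classical local class field theory by passing to pro-$\ell$ completions, and then to note that the same statement will also drop out of Neukirch's abstract formalism once the data of \S 2.2--2.4 are in place. First I would begin with the classical local reciprocity homomorphism $\omega_{K_{\mathfrak p}}$ from $K_{\mathfrak p}^{\times}$ into the Galois group $\mathcal{G}_{\mathfrak p}$ of the maximal abelian extension of $K_{\mathfrak p}$: it is injective with dense image, it induces a topological isomorphism $\widehat{K_{\mathfrak p}^{\times}}\xrightarrow{\ \sim\ }\mathcal{G}_{\mathfrak p}$ from the profinite completion, and it carries the unit group $U_{\mathfrak p}$ onto the inertia subgroup of $\mathcal{G}_{\mathfrak p}$.

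Next I would identify $\mathcal R_{K_{\mathfrak p}}=\varprojlim_k K_{\mathfrak p}^{\times}/K_{\mathfrak p}^{\times\ell^{k}}$ with the maximal pro-$\ell$ quotient of $\widehat{K_{\mathfrak p}^{\times}}$. For each $k$ the group $K_{\mathfrak p}^{\times}/K_{\mathfrak p}^{\times\ell^{k}}$ is a finite $\ell$-group, and every finite abelian $\ell$-quotient of $K_{\mathfrak p}^{\times}$ is killed by some $\ell^{k}$, so the tower $(K_{\mathfrak p}^{\times}/K_{\mathfrak p}^{\times\ell^{k}})_k$ is cofinal among the finite $\ell$-quotients; hence $\mathcal R_{K_{\mathfrak p}}$ is exactly the pro-$\ell$ completion of $K_{\mathfrak p}^{\times}$, and using $K_{\mathfrak p}^{\times}\simeq\pi_{\mathfrak p}^{\mathbb Z}\times\mu_{\mathfrak p}^{0}\times U_{\mathfrak p}^{1}$ together with the structure of the principal units one sees that $\mathcal R_{K_{\mathfrak p}}\simeq\mathbb Z_\ell\times\mathcal U_{K_{\mathfrak p}}$ is precisely the $\ell$-Sylow component of $\widehat{K_{\mathfrak p}^{\times}}$. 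Therefore $\omega_{K_{\mathfrak p}}$ restricts to a topological isomorphism $\mathcal R_{K_{\mathfrak p}}\xrightarrow{\ \sim\ }\mathcal D_{\mathfrak p}$ onto the Galois group of the maximal abelian pro-$\ell$-extension, and $\mathcal U_{K_{\mathfrak p}}=\varprojlim_k U_{\mathfrak p}/U_{\mathfrak p}^{\ell^{k}}$, being the image of $U_{\mathfrak p}$ in that $\ell$-Sylow component, corresponds to $\mathcal I_{\mathfrak p}$. The dictionary between subextensions and submodules then follows by transporting, through this isomorphism, the usual Galois correspondence between closed subgroups of the profinite group $\mathcal D_{\mathfrak p}$ and abelian pro-$\ell$-subextensions of $K_{\mathfrak p}$; since in a profinite group the closed subgroups of finite index are exactly the open ones, these match the finite abelian $\ell$-extensions.

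The step needing care is compatibility: that the $\ell$-adic topology on $\mathcal R_{K_{\mathfrak p}}$ agrees with the topology transported from $\mathcal D_{\mathfrak p}$, and that $\omega_{K_{\mathfrak p}}$ genuinely descends to isomorphisms $K_{\mathfrak p}^{\times}/K_{\mathfrak p}^{\times\ell^{k}}\xrightarrow{\ \sim\ }\mathrm{Gal}(K^{(k)}/K_{\mathfrak p})$ for $K^{(k)}$ the maximal abelian extension of exponent $\ell^{k}$ --- which rests on the existence and norm-limitation theorems of classical local class field theory. The genuinely delicate case is $\ell=p$ (the residue characteristic), where $U_{\mathfrak p}^{1}$ contributes a free $\mathbb Z_\ell$-part of rank $[K_{\mathfrak p}:\mathbb Q_\ell]$ together with the $\ell$-power roots of unity, so one must invoke the precise structure of the principal units rather than a soft divisibility argument. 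Alternatively --- and this is presumably the route the paper takes --- one bypasses classical local class field theory altogether: having defined in \S 2.2--2.4 the degree map $\mathrm{deg}\colon G\to\mathbb Z_\ell$, a valuation that is henselian with respect to it, and having proved the local class field axiom announced in the introduction, one applies Neukirch's abstract reciprocity law to the $G$-module $\mathcal R$ and reads off the isomorphism and the lattice correspondence of Theorem 2.1.1 formally.
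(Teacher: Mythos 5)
Your proposal is correct, and your closing guess about the paper's strategy is exactly right: in the paper this theorem is quoted from Jaulent with no direct proof, and the whole of \S 2.2--2.5 constitutes its rederivation in Neukirch's formalism (degree map $\mathrm{deg}\colon G\to\mathbb Z_\ell$ via $\mathrm{Gal}(k^{nr}/k)\simeq\mathbb Z_\ell$, the $G$-module $A=\bigcup \mathcal R_{L_{\mathfrak P}}$, the henselian valuation $v_{\mathfrak P}$ built from Jaulent's decompositions $\mathcal R_{L_{\mathfrak P}}\simeq U_{\mathfrak P}^{1}\cdot\pi_{\mathfrak P}^{\mathbb Z_\ell}$ resp.\ $\mu_{\mathfrak P}\cdot\pi_{\mathfrak P}^{\mathbb Z_\ell}$, and the class field axiom of Theorem 2.5.1), after which the abstract reciprocity theorem yields the isomorphism and the lattice correspondence as in the corollary to Theorem 2.5.1. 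Your primary argument, however, is a genuinely different route: it is essentially Jaulent's original proof, obtained by observing that $\mathcal R_{K_{\mathfrak p}}=\varprojlim_k K_{\mathfrak p}^{\times}/K_{\mathfrak p}^{\times\ell^k}$ is the pro-$\ell$ (equivalently, $\ell$-Sylow) component of the profinite completion $\widehat{K_{\mathfrak p}^{\times}}$ and restricting the classical reciprocity isomorphism $\widehat{K_{\mathfrak p}^{\times}}\simeq\mathrm{Gal}(K_{\mathfrak p}^{ab}/K_{\mathfrak p})$ to it. That route is shorter but presupposes the full classical local existence and norm-limitation theorems; the paper's route re-proves the reciprocity law inside the axiomatic framework, which is the stated point of the article --- though it is worth noting that even there the proof of the class field axiom (step iv of Theorem 2.5.1) invokes the classical local class field axiom $|\mathrm{H}^0(G,L_{\mathfrak P}^{\times})|=[L_{\mathfrak P}:K_{\mathfrak p}]$, so neither derivation is independent of classical local class field theory. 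Your cautionary remarks about the case $\ell=p$ and about cofinality of the tower $(K_{\mathfrak p}^{\times}/K_{\mathfrak p}^{\times\ell^k})_k$ are well placed and correctly resolved by the structure of the principal units.
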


Our purpose is to prove the existence of the local reciprocity map using Neukirch's abstract class field theory, which we now briefly recall ~\cite[p.~18-36]{Ne1}. We consider the following general framework: $G$ is an abstract profinite group, whose closed subgroups are denoted by $G_K$, those indices $K$ are called ``fields''. $G$ is equipped with a continuous and surjective homomorphism
$\textrm{deg}: \; G \longrightarrow \widehat{\mathbb{Z}} $.

\begin{enumerate}
\item We denote by $k$ the field such that $G_k=G$.

\item We denote by $\bar{k}$ the field such that $G_{\bar{k}}=\{1\}$.

\item If $G_L \subset G_K$, we write $K \subset L$.

\item L/K is said finite if $G_L$ is open ( closed of finite index) in $G_K$; the degree $[L:K]$ is then defined by $[L:K]=(G_K:G_L)$.

\item We write $ K=\prod K_i$ for $G_K= \cap_{i} G_{K_i}$.

\item We write $K=\cap K_i$ if $G_K$ is topologically generated by the $G_{K_i}$.

\item If $G_L$ is normal in $G_K$ we say that $L/K$ is \emph{a Galois extension} and we write $\textrm{Gal}(L/K):=G_K/G_L$.

\item The kernel of $\textrm{deg}$ is a subgroup of $G$ denoted by $G_{\tilde k}=I$ such that $ G/G_{\tilde k} \simeq \widehat{\mathbb{Z}}$. We can restrict $\textrm{deg}$ to $G_K$ and define:
$$ f_K=(\mathbb{Z}: \textrm{deg}(G_K)) \qquad e_K=(G_{\tilde k}:G_{\tilde K}) \qquad I_K=G_{\tilde K}.$$
If $L/K$ is an extension we put:
$$f_{L/K}=(\mathrm{deg}(G_K):\textrm{deg}(G_L)) \qquad e_{L/K}=(I_K:I_L).$$ 
They satisfy the following relations:
$$ f_{L/K}=f_L/f_K \qquad [L:K]=e_{L/K} \cdot f_{L/K}.$$

\item If $K$ is a finite extension $k$ we define $\tilde K= K \cdot \tilde k$.

\end{enumerate}

Neukirch's theory requires \textbf{a $G$-module and a henselian valuation with respect to $\textrm{ deg }$} ~\cite[p.~288]{Ne2}:  a multiplicative $G$-module $A$ is an abelian multiplicative group endowed with a continuous right action
\begin{center}
$\begin{array}{ccccc}
\sigma  & : & A & \to & A  \\
& & a & \mapsto & a^{\sigma}\\
\end{array}$
\end{center}
\noindent{i.e such that}
 $A = \bigcup_{[K:k]< \infty} A_K$,  where $A_K:= \{ a \in A \mid a^{\sigma}=a, \, \forall \sigma \in G_K \}=A^{G_K}$ and where $K$ runs through all finite extensions of $k$.

\noindent{This allows to define a new map, the norm map, which goes to the $G$-module $A_K$ in $A_k$:}
\begin{center}
 $N_{K/k}(a)=\prod_{\sigma}  a^{\sigma} $ 
\end{center}
where $\sigma$ runs through a representative coset of $G_{K}/G_{L}$. 

\noindent{A henselian valuation of $A_k$ with respect to $\textrm{deg}:\; G \rightarrow \widehat{\mathbb{Z}}$ is a homomorphism satisfying the following properties:s~\cite[p.~288]{Ne2}

($i$) $v(A_k)=Z $ such that $ \mathbb{Z} \subset  Z $  and $ Z/n \cdot Z \simeq  \mathbb{Z}/n \cdot \mathbb{Z} $ for all $n>0$

($ii$) $v(N_{K/k} A_K)=f_K.Z$ for all extensions $K$ of $k$.

\medskip

\noindent{Finally we introduce \emph{the class field axiom}: }

\noindent{\textbf{Axiom: }}
\emph{For all cyclic extension $L/K$, we have:
\begin{center}
$  |\textrm{H}^i(G(L/K), A_L)|= \left\{
    \begin{array}{ll}
     [L:K] & \mbox{ for } i=0  \\
      1 & \mbox{ for } i=-1
    \end{array}
\right.$
\end{center}}
\medskip

\noindent{In this context Neukirch proves the following fundamental theorem: ~\cite[p.~28]{Ne1}}
\begin{theorem}
Let $L/K$ be a finite Galois extension, $\sigma \in  G(L/K)^{ab}$, $\tilde \sigma \in \textrm{Gal}(\tilde L /K)$, (which is the Frobenius lift of $\sigma$) and $\Sigma$ i be the fixed field of $\tilde \sigma$, then the  homomorphism 
\begin{align*}
  r_{L/K}\colon   G(L/K)^{ab} &\longrightarrow A_K/N_{L/K}(A_L)    \\
           \sigma &\longmapsto  N_{\Sigma /K}(\pi_{\Sigma}) \; \mathrm{mod} \; N_{L/K}A_L 
\end{align*}
 is an isomorphism, where $\pi_{\Sigma}$ is a prime element of $A_{\Sigma}$.
\end{theorem}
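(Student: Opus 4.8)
The plan is to follow Neukirch's classical argument for the abstract reciprocity law, which breaks into four stages: (1) well-definedness of $r_{L/K}$, (2) the homomorphism property, (3) functoriality in the field extension, and (4) bijectivity, the last being the only stage that uses the class field axiom. I fix notation. For a finite extension $M/k$ write $v_M=\tfrac{1}{f_M}\,v\circ N_{M/k}\colon A_M\to Z$ for the valuation on $A_M$ normalized by the henselian valuation $v$, so that $v_M(A_M)=Z$ and $v_M(N_{M'/M}A_{M'})=f_{M'/M}\,Z$ for $M'\supseteq M$ by axioms (i)--(ii). Write $\phi_K$ for the Frobenius of $K$, a topological generator of $\mathrm{Gal}(\tilde K/K)\cong\mathrm{deg}(G_K)\subseteq\widehat{\mathbb{Z}}$. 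A \emph{Frobenius lift} of $\sigma\in G(L/K)$ is an element $\tilde\sigma\in\mathrm{Gal}(\tilde L/K)$ with $\tilde\sigma|_{L}=\sigma$ and $\tilde\sigma|_{\tilde K}=\phi_K^{\,d}$ for some integer $d\geq 1$; a short check with the $\widehat{\mathbb{Z}}$-tower $\tilde L=L\tilde K$ shows such lifts exist and that the fixed field $\Sigma$ of $\tilde\sigma$ is finite over $K$, so $A_\Sigma$ and its prime elements make sense.

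\emph{Stage 1: well-definedness.} I would first check that $N_{\Sigma/K}(\pi_\Sigma)\bmod N_{L/K}A_L$ depends neither on the prime element $\pi_\Sigma$ of $A_\Sigma$ nor on the choice of Frobenius lift $\tilde\sigma$, and only on the class of $\sigma$ in $G(L/K)^{ab}$. Two prime elements of $A_\Sigma$ differ by a $v_\Sigma$-unit $u$, and the key point is $N_{\Sigma/K}(u)\in N_{L/K}A_L$: since $\tilde L/\Sigma$ is procyclic, topologically generated by $\tilde\sigma$, a Hilbert~90 argument along that tower together with axiom (ii) shows the $v_\Sigma$-units are norms from its finite layers, and $L$ lies in a suitable layer (its compositum with $\tilde K$); transitivity of norms finishes it. A second Frobenius lift differs from $\tilde\sigma$ by an element of $\mathrm{Gal}(\tilde L/L)$; tracking the resulting change of $\Sigma$ shows the change of $N_{\Sigma/K}(\pi_\Sigma)$ is again absorbed into $N_{L/K}A_L$. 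Finally $\sigma$ and a conjugate $\rho\sigma\rho^{-1}$ have conjugate fixed fields, so transport of structure and the abelianness of the target let $r_{L/K}$ descend to $G(L/K)^{ab}$.

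\emph{Stage 2: the homomorphism property.} This is the technical heart, and the step I expect to be the main obstacle. Given $\sigma,\tau\in G(L/K)$, pick Frobenius lifts $\tilde\sigma,\tilde\tau$ with exponents $d,e\geq 1$; then $\tilde\sigma\tilde\tau$ is automatically a Frobenius lift of $\sigma\tau$ with exponent $d+e$. Writing $\Sigma_\sigma,\Sigma_\tau,\Sigma_{\sigma\tau}$ for the three fixed fields, the target identity is
$$ N_{\Sigma_{\sigma\tau}/K}\!\big(\pi_{\Sigma_{\sigma\tau}}\big)\;\equiv\; N_{\Sigma_\sigma/K}\!\big(\pi_{\Sigma_\sigma}\big)\cdot N_{\Sigma_\tau/K}\!\big(\pi_{\Sigma_\tau}\big)\pmod{N_{L/K}A_L}. $$
The difficulty is that $\Sigma_{\sigma\tau}$ is not built in any simple way from $\Sigma_\sigma$ and $\Sigma_\tau$; Neukirch's device is to use the freedom in the exponents $d,e$ to place the computation inside the procyclic tower $\tilde L/\Sigma_\sigma$, after which transitivity of the norm, the behaviour already known on the unramified part, and the valuation axioms --- with the attendant bookkeeping of the indices $e_{\cdot/\cdot}$, $f_{\cdot/\cdot}$ and the relations $f_{L/K}=f_L/f_K$, $[L:K]=e_{L/K}f_{L/K}$ --- yield the identity.

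\emph{Stages 3--4: functoriality and bijectivity.} Next I would record two compatibility squares, each obtained by unwinding the definition of $r$ and using transitivity of norms: for $K\subseteq K'\subseteq L$, a square relating $r_{L/K}$ and $r_{L/K'}$ through the Verlagerung $\mathrm{Ver}\colon G(L/K)^{ab}\to G(L/K')^{ab}$ and the inclusion-induced map $A_K/N_{L/K}A_L\to A_{K'}/N_{L/K'}A_L$ (well defined since $N_{L/K}A_L\subseteq N_{L/K'}A_L$); and, when $K'/K$ is Galois, a square relating $r_{L/K}$ and $r_{K'/K}$ through the restriction $G(L/K)^{ab}\twoheadrightarrow G(K'/K)^{ab}$ and the projection $A_K/N_{L/K}A_L\twoheadrightarrow A_K/N_{K'/K}A_{K'}$. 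For bijectivity I would treat the cyclic case first: for $L/K$ cyclic the class field axiom gives $|A_K/N_{L/K}A_L|=|\mathrm{H}^{0}(G(L/K),A_L)|=[L:K]=|G(L/K)^{ab}|$, so it suffices to prove $r_{L/K}$ injective. Factoring $L/K$ through its maximal unramified subextension $L_0=L\cap\tilde K$, the valuation axioms and the order count identify $A_K/N_{L_0/K}A_{L_0}$ with $\mathbb{Z}/f_{L_0/K}\mathbb{Z}$ via $\pi_K\mapsto 1$, so $r_{L_0/K}$ is an isomorphism; for the totally ramified layer $L/L_0$ one uses the $i=-1$ part of the class field axiom (Hilbert~90) to see $r_{L/K}$ is injective on the subgroup $N_{L_0/K}A_{L_0}/N_{L/K}A_L$, and the second square above assembles the two. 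For a general finite Galois $L/K$ and $\bar\sigma\neq 1$ in $G(L/K)^{ab}$, choose a cyclic quotient $G(L/K)^{ab}\twoheadrightarrow G(K'/K)$ with $\bar\sigma$ still nontrivial; the second square and the cyclic case give $r_{L/K}(\bar\sigma)\neq 1$, so $r_{L/K}$ is injective, and surjectivity then follows by a standard induction on $[L:K]$ resting on the two squares and the cyclic case. Everything outside Stage~2 is thus formal, except that the class field axiom is genuinely needed in Stage~4 --- in particular the fact that every $v$-unit class is a norm in the ramified cyclic case is not visible from the valuation axioms alone.
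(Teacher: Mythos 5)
The paper offers no proof of this statement at all --- it is imported verbatim from Neukirch with the citation [Ne1, p.~28] and used as a black box --- so the only meaningful comparison is with Neukirch's own argument, whose four-stage architecture (well-definedness, multiplicativity, the two compatibility squares, reduction to the cyclic case) your plan reproduces faithfully. As a proof, however, it has two genuine gaps.

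First, Stage 2, which you yourself identify as the technical heart, is asserted rather than executed: ``Neukirch's device'' of varying the exponents $d,e$ is named but not carried out. The actual mechanism requires choosing $\tilde\tau$ so that $\Sigma_{\sigma\tau}$ and $\Sigma_\sigma$ admit a common comparison field inside $\tilde L$, together with the lemma that for $\Sigma\subseteq\Sigma'\subseteq\tilde\Sigma$ one has $N_{\Sigma'/\Sigma}(\pi_{\Sigma'})\equiv\pi_\Sigma^{\,f_{\Sigma'/\Sigma}}$ modulo norms of units; without this the displayed congruence is unproved. Second, your closing claim that the class field axiom enters only in Stage 4 is false, and the error already infects Stage 1: the statement that every $v_\Sigma$-unit is a norm from each finite layer $M$ of the unramified tower $\tilde\Sigma/\Sigma$ is exactly $\mathrm{H}^0(G(M/\Sigma),U_M)=1$, which is deduced from the class field axiom --- the order count $|A_\Sigma/N_{M/\Sigma}A_M|=[M:\Sigma]=f_{M/\Sigma}$ forces the surjection $A_\Sigma/N_{M/\Sigma}A_M\to Z/f_{M/\Sigma}Z$ induced by $v_\Sigma$ to be an isomorphism, hence kills the unit classes --- and not from axiom (ii) alone, which only controls the image of the norm group under $v$ and says nothing about its unit part; likewise ``Hilbert 90'' in this abstract setting is itself the $i=-1$ half of the axiom. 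Neukirch isolates precisely this unramified instance as a standing hypothesis before the reciprocity map is even well defined. Your Stage 4 is sound in outline once these are supplied (though ``injective on the subgroup $N_{L_0/K}A_{L_0}/N_{L/K}A_L$'' does not parse as written, since that is a subgroup of the target, not the source), and it is worth noting for the present paper that the $\ell$-adic transposition also requires checking that Frobenius lifts with $\deg\tilde\sigma\in\mathbb{N}\subseteq\mathbb{Z}_\ell$ still exist and are dense, which the author does not address either.
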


We now define all necessary ingredients to obtain the main theorem of $\ell$-adic class field theory: theorem 2.5.1.

\subsection{$G$ and the $G$-module}

We consider the following context: 

. $k$ is a local field, (we use this notation instead of $k_{\mathfrak{p}}$).

. $k^{nr}$ is the maximal unramified pro-$\ell$-extension of $k$: the compositum of all unramified $\ell$-extensions.

. $\widehat{k}$  is the maximal  pro-$\ell$-extension of $k$: the compositum  of $\ell$-extensions of $k$.

\noindent {Classically $Gal(k^{nr}/k) \simeq \mathbb{Z}_{\ell}$ ~\cite[p.~41-42]{Ne1}}.

\noindent{We write }
\begin{center}
$G=\textrm{Gal}(\widehat{k}/k)$
\end{center}

\noindent{We consider the following $\mathbb{Z}$-module:}
$$A=\lim\limits_{\overrightarrow{L_{\mathfrak{P}}}} \mathcal{R}_{L_{\mathfrak{P}}}$$
where $L_{\mathfrak{P}}$ runs through all finite extensions of $K_{\mathfrak{p}}$,
and $\mathcal{R}_{L_{\mathfrak{P}}} =\varprojlim_{k}  L_{\mathfrak{P}}^{\times}   \diagup {L_{\mathfrak{P}}^{\times  \ell^{k}}}$. It is canonically identified to 
$$ A= \bigcup_{[L_{\mathfrak{P}}:K_{\mathfrak{p}}]< \infty}  \mathcal{R}_{L_{\mathfrak{P}}}.$$
If $L_{\mathfrak{P}}$  is a finite extension of $K_{\mathfrak{p}}$, 
$$A_{L_{\mathfrak{P}}}=\mathcal{R}_{L_{\mathfrak{P}}}$$
is a $\textrm{Gal}(L_{\mathfrak{P}}/K_{\mathfrak{p}})$ module. The group $G$ now axts on $A$ acts component by component.

\subsection{$\textrm{deg}: G \mapsto \mathbb{Z}_{\ell} $}

\begin{defi}
Let $\phi \in G$, its restriction to $k^{nr}$ defines an element of $\mathbb{Z}_{\ell}$, due to the isomorphism
$\textrm{Gal}(\widehat{k} / k ) \simeq \mathbb{Z}_{\ell}$. We define:
\begin{align*}
  \textrm{deg}  \colon  G&\longrightarrow \mathbb{Z}_{\ell}   \\
           \phi &\longmapsto  \phi_{\mid_{k^{nr}}}   
\end{align*}
$ \textrm{deg}$ is a surjective homomorphism whose kernel is $G_{k^{nr}}$ so that: $ G/G_{k^{nr}} \simeq \textrm{Gal}(k^{nr}/k) \simeq \mathbb{Z}_{\ell}$.
\end{defi}

\begin{defi}
Given a finite  $\ell$-extension $K$ of $k$ ,we define:
$$f_K:=(\mathbb{Z}_{\ell}:\mathrm{deg}(G_K)  \quad 
e_K:=(G_{k^{nr}}:I_K)$$
$$I_K=G_{K^{nr}} \cap G_{K}=G_{K \cdot k^{nr}}:=G_{K^{nr}}$$
\end{defi}

\begin{defi}
If $L/K$ is a finite $\ell$-extension we define:
\begin{center}
$f_{L/K}=(\textrm{deg}(G_K):\textrm{deg}(G_L)) \quad   e_{L/K}=(I_K:I_L)$ 
\end{center}
\end{defi}

\begin{proposition}
We have the following fundamental relations:
$$ f_{L/K}=f_L/f_K  \quad  e_{L/K} \cdot f_{L/K}=[L:K] $$
\end{proposition}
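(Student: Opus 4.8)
The plan is to derive both relations from the standard multiplicativity of the invariants $f$, $e$ and the degree in a tower, transported to the $\ell$-adic setting via the fixed degree map $\textrm{deg}\colon G \to \mathbb{Z}_{\ell}$. First I would establish $f_{L/K}=f_L/f_K$. Since $K \subset L$ means $G_L \subset G_K \subset G$, the map $\textrm{deg}$ restricts to give a chain of open subgroups $\textrm{deg}(G_L) \subset \textrm{deg}(G_K) \subset \mathbb{Z}_{\ell}$ (openness follows because $G_K$, $G_L$ are open in $G$ and $\textrm{deg}$ is a continuous surjection onto $\mathbb{Z}_{\ell}$, hence open). The multiplicativity of indices for the tower $\textrm{deg}(G_L) \subset \textrm{deg}(G_K) \subset \mathbb{Z}_{\ell}$ gives
$$(\mathbb{Z}_{\ell}:\textrm{deg}(G_L)) = (\mathbb{Z}_{\ell}:\textrm{deg}(G_K)) \cdot (\textrm{deg}(G_K):\textrm{deg}(G_L)),$$
that is $f_L = f_K \cdot f_{L/K}$, which is the first relation (all indices being finite because the extensions are finite $\ell$-extensions, so the subgroups in $\mathbb{Z}_{\ell}$ are of the form $\ell^{a}\mathbb{Z}_{\ell}$).

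Next I would prove $e_{L/K} \cdot f_{L/K} = [L:K]$. The key identity is $[L:K] = (G_K:G_L)$, and I would split this index through the intermediate subgroup $I_K = G_{K^{nr}}$, using that $I_L = G_{L^{nr}} \subset I_K$: this requires checking $I_L \subset I_K$, which follows from $L \supset K$ giving $L^{nr} \supset K^{nr}$ hence $G_{L^{nr}} \subset G_{K^{nr}}$. Then
$$(G_K:G_L) = (G_K:I_K)\,(I_K:I_L) \big/ (G_L:I_L) \cdot \text{(correction)}$$
is not quite the clean way; instead I would argue directly: $(G_K : G_L) = (I_K : I_L)\cdot(G_K/I_K : G_L\cdot I_K / I_K)$ once one knows $G_L \cap I_K = I_L$. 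So the two substantive points are (a) $G_L \cap I_K = I_L$, i.e. $G_L \cap G_{K^{nr}} = G_{L^{nr}}$, which amounts to $L \cdot K^{nr} = L^{nr}$ — true because $K^{nr}$ is the maximal unramified subextension and compositum with $L$ stays inside $L^{nr}$, while any unramified extension of $L$ comes from such a compositum in the local setting; and (b) the image of $G_L$ in $G_K/I_K \cong \textrm{deg}(G_K)$ is exactly $\textrm{deg}(G_L)$, so the second factor is $(\textrm{deg}(G_K):\textrm{deg}(G_L)) = f_{L/K}$, while the first factor is by definition $(I_K:I_L) = e_{L/K}$.

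The main obstacle I anticipate is part (a): verifying that $I_K \cap G_L = I_L$, equivalently that the maximal unramified pro-$\ell$-subextension behaves correctly under base change from $K$ to $L$, so that the inertia group of $L$ inside $G$ is recovered as the intersection of the inertia group of $K$ with $G_L$. This is the place where one genuinely uses that we are in the local situation (a single place, henselian valuation) rather than a formal diagram chase; everything else is index arithmetic in $\mathbb{Z}_{\ell}$ and in the profinite group $G$. Once (a) is in hand, the computation $(G_K:G_L) = e_{L/K}\cdot f_{L/K}$ is immediate from the second isomorphism theorem applied to $G_L$ and $I_K$ inside $G_K$, together with the identification $G_K/I_K \cong \textrm{deg}(G_K) \subset \mathbb{Z}_{\ell}$ coming from Definition 2.3.1. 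I would present the proof by first recording (a) as a short lemma (or citing the analogous statement in \cite{Ne1}), then doing the two index computations in sequence.
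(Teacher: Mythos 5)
Your argument is correct and is essentially the standard one: the paper gives no proof of its own but simply cites Neukirch \cite[p.~286]{Ne2}, where exactly this index computation is carried out — multiplicativity of indices in the tower $\mathrm{deg}(G_L)\subset\mathrm{deg}(G_K)\subset\mathbb{Z}_{\ell}$ for the first relation, and the second isomorphism theorem applied to $G_L$ and $I_K$ inside $G_K$ together with $G_K/I_K\simeq\mathrm{deg}(G_K)$ for the second. Note that the point you flag as the main obstacle, $G_L\cap I_K=I_L$, is immediate in this formal framework and needs no genuine local input: by definition $I_K=G_{K\cdot k^{nr}}=G_K\cap G_{k^{nr}}=G_K\cap\ker(\mathrm{deg})$ and likewise $I_L=G_L\cap\ker(\mathrm{deg})$, so $G_L\cap I_K=G_L\cap G_K\cap\ker(\mathrm{deg})=G_L\cap\ker(\mathrm{deg})=I_L$ since $G_L\subseteq G_K$.
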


\begin{proof}
~\cite[p.~286]{Ne2} 
\end{proof}

\subsection{The valuation}

In $\ell$-adic class field theory, the degree is a homomorphism from $G$ to $\mathbb{Z}_{\ell}$, and the valuation $v$ is a homomorphism from $A_k$ to $\mathbb{Z}_{\ell}$. In this part, we denote  by $K_{\mathfrak{p}}$ a local field.

\noindent{For a finite extension $L_{\mathfrak{P}}$, we defined }
$$ A_{L_{\mathfrak{P}}}= \mathcal{R}_{L_{\mathfrak{P}}}=\varprojlim_{k}  L_{\mathfrak{P}}^{\times}   \diagup {  L_{\mathfrak{P}}^{\times  \ell^{k}}}$$ a $\textrm{Gal}(L_{\mathfrak{P}}/K_{\mathfrak{p}})$-module.
Jaulent proved that ~\cite[proposition 1.2]{Ja1}: 
$$\mathcal{R}_{L_{\mathfrak{P}}} \simeq  U_{\mathfrak{P}}^{1}  \cdot \pi_{\mathfrak{P}}^{\mathbb{Z}_{\ell}}
\; \mathrm{if} \, \mathfrak{P} \mid  \ell $$
$$\mathcal{R}_{L_{\mathfrak{P}}} \simeq {\mu_{\mathfrak{P}} } \cdot  {\pi_{\mathfrak{P}}^{\mathbb{Z}_{\ell}}} 
 \; \mathrm{if} \, \mathfrak{P} \nmid  \ell $$
This allows to define the valuation $v_{\mathfrak{P}}$  as giving the power in $\mathbb{Z}_{\ell}$ of the uniformising element.

\begin{proposition}
This valuation $v_{\mathfrak{P}}$  is henselian with respect to $ deg : G \mapsto \mathbb{Z}_{\ell}.$
\end{proposition}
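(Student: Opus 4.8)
The plan is to verify the two defining properties of a henselian valuation directly from Neukirch's definition, using the explicit structure of $\mathcal{R}_{L_{\mathfrak P}}$ recalled above. First I would check property ($i$): the valuation $v_{\mathfrak p}\colon A_k=\mathcal{R}_{K_{\mathfrak p}}\to\mathbb{Z}_\ell$ reads off the exponent of the uniformizer, and since $\pi_{\mathfrak p}^{\mathbb{Z}_\ell}$ is a direct factor of $\mathcal{R}_{K_{\mathfrak p}}$ (by Jaulent's decomposition, whether or not $\mathfrak p\mid\ell$), the map $v_{\mathfrak p}$ is surjective onto $\mathbb{Z}_\ell$. So here $Z=\mathbb{Z}_\ell$, and one must observe that $\mathbb{Z}\subset\mathbb{Z}_\ell$ with $\mathbb{Z}_\ell/n\mathbb{Z}_\ell\simeq\mathbb{Z}/n\mathbb{Z}$ for every $n>0$ (the nontrivial content being only at $n$ a power of $\ell$, where it is standard). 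This part is essentially bookkeeping once the decomposition $\mathcal{R}_{L_{\mathfrak P}}\simeq U^1_{\mathfrak P}\cdot\pi_{\mathfrak P}^{\mathbb{Z}_\ell}$ (resp. $\mu_{\mathfrak P}\cdot\pi_{\mathfrak P}^{\mathbb{Z}_\ell}$) is in hand.

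The substantive step is property ($ii$): for every finite $\ell$-extension $K$ of $k$ one needs $v_{\mathfrak p}(N_{K/k}A_K)=f_K\cdot\mathbb{Z}_\ell$. By the tower formulas in Proposition 2.3.1 it suffices to treat a single finite extension $L_{\mathfrak P}/K_{\mathfrak p}$ and show $v_{\mathfrak p}(N_{L_{\mathfrak P}/K_{\mathfrak p}}\mathcal{R}_{L_{\mathfrak P}})=f_{L_{\mathfrak P}/K_{\mathfrak p}}\cdot\mathbb{Z}_\ell$. The approach is to compare $v_{\mathfrak p}$ with the analogous valuation $v_{\mathfrak P}$ on $\mathcal{R}_{L_{\mathfrak P}}$: for $x\in L_{\mathfrak P}^\times$ one has $v_{\mathfrak p}(N_{L_{\mathfrak P}/K_{\mathfrak p}}x)=f_{L_{\mathfrak P}/K_{\mathfrak p}}\cdot v_{\mathfrak P}(x)$, the usual relation between normalized valuations of a local field and its extension (the residue-degree factor $f$ appearing exactly because $N$ multiplies valuations by the local degree while the two valuations are normalized to be surjective). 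Passing to the $\ell$-adic completions $\mathcal{R}$ this identity survives by continuity and $\mathbb{Z}_\ell$-linearity, so $v_{\mathfrak p}(N_{L_{\mathfrak P}/K_{\mathfrak p}}\mathcal{R}_{L_{\mathfrak P}})=f_{L_{\mathfrak P}/K_{\mathfrak p}}\cdot v_{\mathfrak P}(\mathcal{R}_{L_{\mathfrak P}})=f_{L_{\mathfrak P}/K_{\mathfrak p}}\cdot\mathbb{Z}_\ell$, using surjectivity of $v_{\mathfrak P}$ from the first part. Finally, the definition of $f_K$ as $(\mathbb{Z}_\ell:\mathrm{deg}(G_K))$ must be matched with $f_{L_{\mathfrak P}/K_{\mathfrak p}}$ as defined in Definition 2.3.3, which is exactly the multiplicativity $f_{L/K}=f_L/f_K$ of Proposition 2.3.1, together with the identification of $\mathrm{deg}$ restricted to $G_K$ with the residue-degree data of $k^{nr}$.

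The main obstacle I anticipate is the one genuinely $\ell$-adic point: that taking $\varprojlim_k(-)/(-)^{\ell^k}$ commutes well enough with the norm map that the equality $v_{\mathfrak p}\circ N=f\cdot v_{\mathfrak P}$ lifts from $L_{\mathfrak P}^\times$ to $\mathcal{R}_{L_{\mathfrak P}}$, and that the image $N_{L_{\mathfrak P}/K_{\mathfrak p}}\mathcal{R}_{L_{\mathfrak P}}$ — which a priori is only the closure of the image of norms of actual elements — still has valuation image exactly $f\cdot\mathbb{Z}_\ell$ rather than something smaller; this is where one invokes the compactness of $\mathcal{R}_{L_{\mathfrak P}}$ (so the continuous image $N_{L_{\mathfrak P}/K_{\mathfrak p}}\mathcal{R}_{L_{\mathfrak P}}$ is already closed) and the fact that $f\cdot\mathbb{Z}_\ell$ is closed in $\mathbb{Z}_\ell$. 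Everything else is a transcription of the classical computation of $v(N_{K/k}K^\times)$ for local fields into the $\ell$-adified setting, guided by Neukirch's treatment in~\cite[p.~286--288]{Ne2}.
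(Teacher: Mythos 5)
Your proposal is correct and follows essentially the same route as the paper: surjectivity of $v_{\mathfrak p}$ from Jaulent's decomposition for property ($i$), and the identity $v_{\mathfrak p}\circ N_{L_{\mathfrak P}/K_{\mathfrak p}}=f_{L_{\mathfrak P}/K_{\mathfrak p}}\cdot v_{\mathfrak P}$ (derived in the paper via the unique extension of the normalized valuation and the relation $[L:K]=ef$) for property ($ii$). Your closing remark on why the norm image is closed in the $\ell$-adic setting is a point the paper passes over silently, and is a welcome addition rather than a divergence.
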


\begin{proof}

The valuation associated to $\mathcal{R}_{K_{\mathfrak{p}}}$, $v_{\mathfrak{p}}$, 
is a surjective homomorphism; hence
\begin{center}
$v_{\mathfrak{p}}(\mathcal{R}_{K_{\mathfrak{p}}})=\mathbb{Z}_{\ell}$ := Z; and indeed $Z /n.Z \simeq 
\mathbb{Z} / n.\mathbb{Z}$  for all $n >0$.
\end{center}
We now check that $v_{\mathfrak{p}}(N_{L_{\mathfrak{P}}/K_{\mathfrak{p}}} \mathcal{R}_{K_{\mathfrak{p}}})=f_{ L_{\mathfrak{P}}/K_{\mathfrak{p}}} \cdot Z$.
The valuation $ v_L: \; \mathcal{R}_{L_{\mathfrak{P}}} \longrightarrow \mathbb{Z}_{\ell}$ can  be viewed as  an extension of the usual normalized valuation of $L_{\mathfrak{P}}$, denoted by $w_{\mathfrak{P}}$. 
In fact, we have the following commutative diagrams:

$$
\begin{CD}
  L_{\mathfrak{P}}^{\times} @>>> \mathcal{R}_{L_{\mathfrak{P}} }     & \qquad\qquad &          K_{\mathfrak{p}}^{\times} @>>>  \mathcal{R}_{K_{\mathfrak{p}} }\\
@V{w_{\mathfrak{P}}}VV   @VV{v_{\mathfrak{P}}}V     @V{w_{\mathfrak{p}}}VV   @VV{v_{ \mathfrak{p}}}V \\
\mathbb{Z}  @>>>  \mathbb{Z}_{\ell}       & &     \mathbb{Z} @>>>  \mathbb{Z}_{\ell}
\end{CD}
$$

The valuation $w_{\mathfrak{p}}$ extends uniquely to $L_{\mathfrak{P}}$ by:
$ \frac{1}{[L_{\mathfrak{P}}:K_{\mathfrak{p}}]} ( w_{\mathfrak{p}} \circ N_{L_{\mathfrak{P}/K_{\mathfrak{p}}}})$ and thus $v_{\mathfrak{p}}$ extends uniquely to $L_{\mathfrak{P}}$.
As $\frac{1}{e_{ L_{\mathfrak{P}}/K_{\mathfrak{p}}}} \cdot w_{\mathfrak{P}}$ is the continuation of $w_{\mathfrak{p}}$, we get:
$$\frac{1}{e_{ L_{\mathfrak{P}}/K_{\mathfrak{p}}} } \cdot v_{\mathfrak{P}}( \mathcal{R}_{L_{\mathfrak{P}} })=\frac{1}{[L_{\mathfrak{P}}:K_{\mathfrak{p}}]} \cdot v_{\mathfrak{p}}(N_{L_{\mathfrak{P}}/K_{\mathfrak{p}}} \mathcal{R}_{K_{\mathfrak{p}}})=\frac{1}{e_{ L_{\mathfrak{P}}/K_{\mathfrak{p}}}.f_{ L_{\mathfrak{P}}/K_{\mathfrak{p}}}} \cdot v_{\mathfrak{p}}(N_{L_{\mathfrak{P}}/K_{\mathfrak{p}}} \mathcal{R}_{k})$$
So we deduce that:
\begin{center}
$f_{ L_{\mathfrak{P}}/K_{\mathfrak{p}}} \cdot v_{\mathfrak{P}}( \mathcal{R}_{L_{\mathfrak{P}} })=v_k(N_{L_{\mathfrak{P}}/k}
\mathcal{R}_{k})$
\end{center}
Yet we have the relation $ f_{ L_{\mathfrak{P}}/K_{\mathfrak{p}}}=f_{ L_{\mathfrak{P}}} / f_{K_{\mathfrak{p}}}$, and due to the definition of $ f_{K_{\mathfrak{p}}}$ we have $ f_{K_{\mathfrak{p}}}=( \mathbb{Z}_{\ell}: d(G_{K_{\mathfrak{p}}}))=1$ as the degree is surjective.
Finally, we get:
\begin{center}
$ f_{ L_{\mathfrak{P}}/K_{\mathfrak{p}}} \cdot v_{\mathfrak{P}}( \mathcal{R}_{L_{\mathfrak{P}} })
= f_{ L_{\mathfrak{P}}/K_{\mathfrak{p}}} \cdot \mathbb{Z}_{\ell}=v_{\mathfrak{p}}(N_{L_{\mathfrak{P}}/K_{\mathfrak{p}}} \mathcal{R}_{K_{\mathfrak{p}}})$
\end{center}
for all finite extension $ L_{\mathfrak{P}}/K_{\mathfrak{p}}$ of $K_{\mathfrak{p}}$, the second point point ($ii$) is also checked.
\end{proof}

\subsection{The class field axiom}

We must show:

\begin{theorem}
For all cyclic $\ell$-extension $L_{\mathfrak{P}}$ of a local field $K_{\mathfrak{p}}$ we have
$$  |\textrm{H}^i_{\ell}(G(L_{\mathfrak{P}} / K_{p}),  \mathcal{R}_{L_{\mathfrak{P}}})|= \left\{
    \begin{array}{ll}
     [L_{\mathfrak{P}}:K_{p}] & \mbox{ for } i=0  \\
      1 & \mbox{ for } i=1
    \end{array}
\right.$$
\end{theorem}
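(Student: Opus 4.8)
The plan is to reduce the $\ell$-adic class field axiom to the classical local class field axiom for the multiplicative group, using Corollary 1.2.1 (the $\ell$-adic Herbrand machinery) to transport the known cohomological sizes through the inverse limit defining $\mathcal{R}_{L_{\mathfrak{P}}}$. Let $G=G(L_{\mathfrak{P}}/K_{\mathfrak{p}})$, a cyclic $\ell$-group. Recall from Jaulent's structure result quoted in \S2.4 that $\mathcal{R}_{L_{\mathfrak{P}}}\simeq U^{1}_{\mathfrak{P}}\cdot\pi_{\mathfrak{P}}^{\mathbb{Z}_{\ell}}$ when $\mathfrak{P}\mid\ell$ and $\mathcal{R}_{L_{\mathfrak{P}}}\simeq\mu_{\mathfrak{P}}\cdot\pi_{\mathfrak{P}}^{\mathbb{Z}_{\ell}}$ when $\mathfrak{P}\nmid\ell$; in both cases we have a $G$-equivariant exact sequence
\begin{equation*}
1\longrightarrow\mathcal{U}_{L_{\mathfrak{P}}}\longrightarrow\mathcal{R}_{L_{\mathfrak{P}}}\overset{v_{\mathfrak{P}}}{\longrightarrow}\mathbb{Z}_{\ell}\longrightarrow 0,
\end{equation*}
where $\mathbb{Z}_{\ell}$ carries the trivial $G$-action (the valuation of $\S2.4$ is $G$-invariant).

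First I would compute $h_{\ell}(G,\mathcal{R}_{L_{\mathfrak{P}}})$. By Corollary 1.2.1(ii), $h_{\ell}(G,\mathcal{R}_{L_{\mathfrak{P}}})=h_{\ell}(G,\mathcal{U}_{L_{\mathfrak{P}}})\cdot h_{\ell}(G,\mathbb{Z}_{\ell})$. The trivial module $\mathbb{Z}_{\ell}$ gives $\mathrm{H}^{0}_{\ell}=\mathbb{Z}_{\ell}/[L_{\mathfrak{P}}:K_{\mathfrak{p}}]\mathbb{Z}_{\ell}$ of order $[L_{\mathfrak{P}}:K_{\mathfrak{p}}]$ and $\mathrm{H}^{-1}_{\ell}=0$, so $h_{\ell}(G,\mathbb{Z}_{\ell})=[L_{\mathfrak{P}}:K_{\mathfrak{p}}]$. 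For $\mathcal{U}_{L_{\mathfrak{P}}}$ I would argue that its Herbrand quotient is $1$: in the tame case $\mathcal{U}_{L_{\mathfrak{P}}}=\mu_{\mathfrak{P}}$ is finite, so Corollary 1.2.1(i) applies directly; in the wild case $\mathcal{U}_{L_{\mathfrak{P}}}=U^{1}_{\mathfrak{P}}$, the $\ell$-completion of the principal units, and one invokes the standard fact (the analogue of Neukirch/Serre's computation) that $U^{1}_{\mathfrak{P}}$ contains a $G$-submodule of finite index isomorphic to $\mathbb{Z}_{\ell}[G]^{[K_{\mathfrak{p}}:\mathbb{Q}_{\ell}]}$ via the $\ell$-adic logarithm, which is cohomologically trivial, whence $h_{\ell}(G,U^{1}_{\mathfrak{P}})=1$ by Corollary 1.2.1(i) and (ii). Therefore $h_{\ell}(G,\mathcal{R}_{L_{\mathfrak{P}}})=[L_{\mathfrak{P}}:K_{\mathfrak{p}}]$.

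Next I would pin down $\mathrm{H}^{1}_{\ell}(G,\mathcal{R}_{L_{\mathfrak{P}}})=0$ (equivalently $\mathrm{H}^{-1}_{\ell}=0$ by Corollary 1.2.1(iii)), which together with the Herbrand quotient forces $|\mathrm{H}^{0}_{\ell}|=[L_{\mathfrak{P}}:K_{\mathfrak{p}}]$ and completes the proof. By Theorem 1.2.1, $\mathrm{H}^{1}_{\ell}(G,\mathcal{R}_{L_{\mathfrak{P}}})=\mathbb{Z}_{\ell}\otimes_{\mathbb{Z}}\mathrm{H}^{1}(G,\mathcal{R}_{L_{\mathfrak{P}}})$ once one checks that $\mathcal{R}_{L_{\mathfrak{P}}}$ is of the form $\mathbb{Z}_{\ell}\otimes(-)$; more robustly, one passes to the finite levels $L_{\mathfrak{P}}^{\times}/L_{\mathfrak{P}}^{\times\ell^{k}}$, uses the classical Hilbert 90 vanishing $\mathrm{H}^{1}(G,L_{\mathfrak{P}}^{\times})=0$ together with the exact sequence $1\to L_{\mathfrak{P}}^{\times\ell^{k}}\to L_{\mathfrak{P}}^{\times}\to L_{\mathfrak{P}}^{\times}/L_{\mathfrak{P}}^{\times\ell^{k}}\to 1$ to control $\mathrm{H}^{1}$ and $\mathrm{H}^{2}$, and then takes the inverse limit (the relevant Mittag--Leffler condition holds since the transition maps on the finite $\mathrm{H}^{i}$ are surjective). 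I expect the main obstacle to be precisely this last point: justifying that cohomology commutes with the inverse limit $\varprojlim_{k}$ defining $\mathcal{R}_{L_{\mathfrak{P}}}$, i.e. that $\mathrm{H}^{1}_{\ell}(G,\varprojlim_{k}L_{\mathfrak{P}}^{\times}/L_{\mathfrak{P}}^{\times\ell^{k}})=\varprojlim_{k}\mathrm{H}^{1}(G,L_{\mathfrak{P}}^{\times}/L_{\mathfrak{P}}^{\times\ell^{k}})$ with no $\varprojlim^{1}$ obstruction — here one uses that $G$ is finite so all the groups involved are finite (or finitely generated $\mathbb{Z}/\ell^{k}$-modules), making the system automatically Mittag--Leffler. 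Once the $\mathrm{H}^{1}$ vanishing and the Herbrand quotient $[L_{\mathfrak{P}}:K_{\mathfrak{p}}]$ are both in hand, the stated values of $|\mathrm{H}^{i}_{\ell}|$ for $i=0,1$ follow immediately.
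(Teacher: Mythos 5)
Your computation of the Herbrand quotient is essentially the paper's step (v): the exact sequence $1\to\mathcal{U}_{L_{\mathfrak{P}}}\to\mathcal{R}_{L_{\mathfrak{P}}}\to\mathbb{Z}_{\ell}\to 1$, multiplicativity, $h_{\ell}(G,\mathbb{Z}_{\ell})=[L_{\mathfrak{P}}:K_{\mathfrak{p}}]$, and $h_{\ell}(G,\mathcal{U}_{L_{\mathfrak{P}}})=1$ (finite $\mu_{\mathfrak{P}}$ in the tame case, the standard open-lattice argument for $U^{1}_{\mathfrak{P}}$ in the wild case). That half is fine.

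The gap is in your proof that $\mathrm{H}^{1}_{\ell}(G,\mathcal{R}_{L_{\mathfrak{P}}})=1$. You route this through the finite levels $L_{\mathfrak{P}}^{\times}/L_{\mathfrak{P}}^{\times\ell^{k}}$ and the sequence $1\to L_{\mathfrak{P}}^{\times\ell^{k}}\to L_{\mathfrak{P}}^{\times}\to L_{\mathfrak{P}}^{\times}/L_{\mathfrak{P}}^{\times\ell^{k}}\to 1$, claiming Hilbert 90 plus "control of $\mathrm{H}^{2}$" gives the vanishing. But the connecting map sends $\mathrm{H}^{1}(G,L_{\mathfrak{P}}^{\times}/L_{\mathfrak{P}}^{\times\ell^{k}})$ into $\mathrm{H}^{2}(G,L_{\mathfrak{P}}^{\times\ell^{k}})$, and $L_{\mathfrak{P}}^{\times\ell^{k}}$ is \emph{not} $L_{\mathfrak{P}}^{\times}$ as a $G$-module: it is $L_{\mathfrak{P}}^{\times}/\mu_{\ell^{k}}(L_{\mathfrak{P}})$, so controlling its $\mathrm{H}^{2}$ drags in $\mathrm{H}^{*}(G,\mu_{\ell^{k}}(L_{\mathfrak{P}}))$, which you never address; the finite-level vanishing is simply not established by what you wrote. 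You also locate the "main obstacle" in commuting cohomology with $\varprojlim_{k}$, but for a finite group acting on finite modules that step is routine ($\varprojlim^{1}$ vanishes); the real difficulty is the vanishing itself. The paper avoids all of this by replacing $L_{\mathfrak{P}}^{\times\ell^{k}}$ with the $\ell$-divisible part $L_{\mathfrak{P}}^{\times\,div}$: it shows $\mathrm{H}^{0}(G,L_{\mathfrak{P}}^{\times\,div})=\mathrm{H}^{-1}(G,L_{\mathfrak{P}}^{\times\,div})=1$ by a case analysis on $\mathfrak{P}\mid\ell$ versus $\mathfrak{P}\nmid\ell$, so the Herbrand hexagon transfers Hilbert 90 and the classical axiom from $L_{\mathfrak{P}}^{\times}$ to $L_{\mathfrak{P}}^{\times}/L_{\mathfrak{P}}^{\times\,div}$ in one step; since $\mathcal{R}_{L_{\mathfrak{P}}}=\mathbb{Z}_{\ell}\otimes\bigl(L_{\mathfrak{P}}^{\times}/L_{\mathfrak{P}}^{\times\,div}\bigr)$, Theorem 1.2.1 then carries the (finite $\ell$-group) cohomology over to $\mathrm{H}^{i}_{\ell}$ unchanged. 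This is exactly the "check that $\mathcal{R}_{L_{\mathfrak{P}}}$ is of the form $\mathbb{Z}_{\ell}\otimes(-)$" option you mention and then abandon; to repair your argument, identify the $(-)$ as $L_{\mathfrak{P}}^{\times}/L_{\mathfrak{P}}^{\times\,div}$ and prove the cohomological triviality of the divisible part.
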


\begin{proof}
Let $G:=G(L_{\mathfrak{P}} / K_{p})$

\noindent{We consider the following exact sequence: }
$$ 1 \longrightarrow L_{\mathfrak{P}}^{ \times div} \longrightarrow L_{\mathfrak{P}}^{\times} \longrightarrow  L_{\mathfrak{P}}^{\times}/ L_{\mathfrak{P}}^{\times div} \longrightarrow 1$$
where $ L_{\mathfrak{P}}^{ \times div}$ is the $\ell$-divisible part of $ L_{\mathfrak{P}}^{\times}$.
We recall that a multiplicative abelian group is said $\ell$-divisible if each element is a $\ell^{n}$-th power for an integer $n$.
Since $G$ is cyclic, we obtain the Herbrand hexagon: 

\begin{displaymath}
\xymatrix{
&\textrm{H}^{0}(G,L_{\mathfrak{P}}^{\times  div})  \ar [r] &  \textrm{H}^{0}(G, L_{\mathfrak{P}}^{\times})  \ar [dr]& \\
\textrm{H}^{-1}(G, L_{\mathfrak{P}}^{\times}/ L_{\mathfrak{P}}^{\times div}) \ar[ur] &  &  & \textrm{H}^{0}(G, L_{\mathfrak{P}}^{\times}/ L_{\mathfrak{P}}^{\times div}) \ar[dl] \\
&  \textrm{H}^{-1}(G,L_{\mathfrak{P}}^{\times })\ar[ul] &  \textrm{H}^{-1}(G,L_{\mathfrak{P}}^{\times div }) \ar[l] & \\
}
\end{displaymath}

i) Hilbert's  theorem 90 states that $ \textrm{H}^{-1}(G,L_{\mathfrak{P}}^{\times }) =1$.
\bigskip

ii)We  show that $\textrm{H}^{0}(G,L_{\mathfrak{P}}^{\times div })=1$  and  $\textrm{H}^{-1}(G,L_{\mathfrak{P}}^{\times div })=1$. By Hensel's lemma we have:
$L_{\mathfrak{P}}^{\times} \simeq \mu_{\mathfrak{P}}^{0} \cdot U_{\mathfrak{P}}^{1} \cdot \pi_{\mathfrak{P}}^{\mathbb{Z}}$
 \quad and \quad $ \mu_{\mathfrak{P}}^{0} \simeq  \mu_{\mathfrak{P}} \cdot   \mu_{\mathfrak{P}, div}$
where $ \mu_{\mathfrak{P}}$ is the $\ell$-Sylow subgroup of the group of roots of units and $ \mu_{\mathfrak{P}, div}$ is its $\ell$-divisible part.
\medskip

$\bullet$ \emph{case 1}: If $\mathfrak{P} \nmid \ell $ then $ U_{{\mathfrak{P}}}^{1}$ is a $\mathbb{Z}_{\mathfrak{P}}$-module, as  $\mathfrak{P}$ is invertible in $\mathbb{Z}_{\ell}$, so $ U_{{\mathfrak{P}}}^{1}$ is $\ell$ -divisible and so is $\mu_{\mathfrak{P}, div} \cdot U_{\mathfrak{P}}^{1}$. We have $h(G, \mu_{\mathfrak{P}, div} \cdot U_{\mathfrak{P}}^{1})=h(G, \mu_{\mathfrak{P}, div}) \cdot h(G, U_{\mathfrak{P}}^{1}) $; but $h(G,\mu_{\mathfrak{P}, div})=1$  (as $\mu_{\mathfrak{P}, div}$ is a finite group ) and  $ h(G,  U_{\mathfrak{P}}^{1})=1$ s~\cite[p.~40]{Ne1} so:
$h(G, \mu_{\mathfrak{P}, div} \cdot U_{\mathfrak{P}}^{1}):=h(G,L_{\mathfrak{P}}^{\times div })=1.$
Moreover, if $A$ is a $G$-module by definition $\textrm{H}^{0}(G,A)= \textrm{Ker}(\delta) / \textrm{Im}(\nu)$ where

\def\P{\mathfrak{P}}
\def\p{\mathfrak{p}}
\begin{align*}
  \delta\colon  A &\longrightarrow B     \qquad & \mu\colon A &\longrightarrow B \\
           a &\longmapsto (\sigma-1)a      &a &\longmapsto Tr_{L_\P/K_\p} (a)
\end{align*}

\noindent{If $a \in \textrm{Ker}(\delta) \cap L_{\mathfrak{P}}^{\times div}$ then $a \in (\mu_{\mathfrak{P}, div} \cdot U_{\mathfrak{P}}^{1})^{G}=(\mu_{\mathfrak{p}, div} \cdot U_{\mathfrak{p}}^{1})$ since the extension is  Galois, where}
 $ K_{\mathfrak{p}}^{\times} \simeq   \mu_{\mathfrak{p}} \cdot \mu_{\mathfrak{p}, div}\cdot U_{\mathfrak{p}}^{1} \cdot
\pi_{\mathfrak{p}}^{\mathbb{Z}}$. Consequently $a \in K_{\mathfrak{p}}^{\times div}$ and  so we can choose $b \in K_{\mathfrak{p}}^{\times}$ such that $ a=b^{\ell^{[L_{\mathfrak{P}}:K_{p}]}}=N(b)$. It follows that $ \textrm{H}^{0}(G,L_{\mathfrak{P}}^{ \times div})=1$, as
$h(G,L_{\mathfrak{P}}^{ \times div})=1$ and we finally get $ \textrm{H}^{-1}(G,L_{\mathfrak{P}}^{ \times div})=1$. 
\bigskip

$\bullet$ \emph{case 2}:
If $ \mathfrak{P} \mid \ell $ the group $\mu_{\mathfrak{P}}^{0}$ is $\ell$-divisible, and as the group of principal units is a noetherian  $\mathbb{Z_{\ell}}$-module, it is isomorphic to the inverse limit of its finite quotients:  
$L_{\mathfrak{P}}^{\times} \simeq \underbrace{\mu_{\mathfrak{P}}^{0}}_{div \, part} \cdot U_{\mathfrak{P}}^{1} \cdot \pi_{\mathfrak{P}}^{\mathbb{Z}}$. Since $\mu_{\mathfrak{P}}^{0}$ is  finite we have $ h(G,\mu_{\mathfrak{P}}^{0})=1$; using the same arguments as in case 1, we finally obtain that $ \textrm{H}^{0}(G,L_{\mathfrak{P}}^{\times div})$ is trivial and so is $ \textrm{H}^{-1}(G,L_{\mathfrak{P}}^{\times div})$.
\bigskip

iii)Using Herbrand's hexagon, we get $\textrm{H}^{-1}(G, L_{\mathfrak{P}}^{\times}/ L_{\mathfrak{P}}^{\times div})=1$.
\smallskip

iv) From Herbrand's hexagon we obtain
 $ \textrm{H}^{0}(G,L_{\mathfrak{P}}^{\times }) \simeq  \textrm{H}^{0}(G, L_{\mathfrak{P}}^{\times}/ L_{\mathfrak{P}}^{div}). $ But due to the local class field axiom, we have: $ | \textrm{H}^{0}(G,L_{\mathfrak{P}}^{\times })|=
[L_{\mathfrak{P}}:K_{p}]$. Finally, we get $ |  \textrm{H}^{0}(G, L_{\mathfrak{P}}^{\times}/ L_{\mathfrak{P}}^{div})|=
[L_{\mathfrak{P}}:K_{p}]$.
\smallskip

v) We show that  $h_{\ell}(G, \mathcal{R}_{L_{\mathfrak{P}}})=  [L_{\mathfrak{P}}:K_{p}]$.

\noindent{We now consider the following exact sequence, where $\mathbb{Z}_{\ell}$ is considered as a trivial $G$-module:}
$$1 \longrightarrow  \mathcal{U}_{L_{\mathfrak{P}}} \longrightarrow \mathcal{R}_{L_{\mathfrak{P}}}  \overset{v_{\mathfrak{P}}}{\longrightarrow} \mathbb{Z}_{\ell} \longrightarrow 1.$$
Recall that 
\begin{center}
:  if $\mathfrak{P} \mid  \ell $ \quad  $\mathcal{R}_{L_{\mathfrak{P}}} \simeq  U_{\mathfrak{P}}^{1}  \cdot \pi_{\mathfrak{P}}^{\mathbb{Z}_{\ell}} $ \quad  and \quad  ${\mathcal{U}_{L_{\mathfrak{P}} }} \simeq   U_{\mathfrak{P}}^{1}  $ 
\end{center}
\begin{center}
: else \quad $\mathcal{R}_{L_{\mathfrak{P}}} \simeq {\mu_{\mathfrak{P}} } \cdot  {\pi_{\mathfrak{P}}^{\mathbb{Z}_{\ell}}} $
\quad and \quad ${\mathcal{U}_{L_{\mathfrak{P}} }} \simeq \mu_{\mathfrak{P}}$ 
\end{center}
So,
$$ h_{\ell}(G, \mathcal{R}_{L_{\mathfrak{P}}})= h_{\ell}(G, \mathcal{U}_{L_{\mathfrak{P}}}) \cdot h_{\ell}(G, \mathbb{Z}_{\ell}). $$
Since $\mathbb{Z}_{\ell}$ is a trivial $G$-module we have:
\begin{center}
$ \textrm{H}^{0} (G,\mathbb{Z}_{\ell}) \simeq \mathbb{Z}_{\ell} / (|G| \cdot \mathbb{Z}_{\ell})$
\quad  $ \textrm{H}^{-1}(G,\mathbb{Z}_{\ell})=1$ \quad and \quad $h(G,\mathbb{Z}_{\ell})=[L_{\mathfrak{P}}:K_{\mathfrak{p}}]$.
\end{center}
Consequently it suffices to show that $ h_{\ell}(G,\mathcal{U}_{L_{\mathfrak{P}}})=1$.

\noindent{\emph{For $\mathfrak{P} \nmid  \ell $}}: as $\mu_{\mathfrak{P}, \ell}$ is the $ \ell $-Sylow subgroup of the group of units in $L_{\mathfrak{P}}$ it is a finite group, so a finite $G$-module; we use Herbrand's property, we get $h(G,\mathcal{U}_{L_{\mathfrak{P}}})=1$.
\smallskip

\noindent{\emph{For $\mathfrak{P} \mid  \ell $}}: we use  $h(G,U_{L_{\mathfrak{P}}})=1$  ~\cite[p.~40]{Ne1} and the exact sequence:
$$ 1 \longrightarrow U_{L_{\mathfrak{P}}}^{1}\longrightarrow U_{L_{\mathfrak{P}}} \longrightarrow U_{L_{\mathfrak{P}}}/
 U_{L_{\mathfrak{P}}}^{1} \longrightarrow 1$$
By Hensel's lemma $U_{L_{\mathfrak{P}}}/ U_{L_{\mathfrak{P}}}^{1} \simeq \kappa^{\ast}$ where $\kappa$ is the residue field. So $h(G, U_{L_{\mathfrak{P}}})= h(G, U_{L_{\mathfrak{P}}}^{1} ) \cdot h(G, U_{L_{\mathfrak{P}}}/ U_{L_{\mathfrak{P}}}^{1} )$. In this case we also obtain, $h(G, U_{L_{\mathfrak{P}}}^{1})=1$.

 In both cases, we have $ h_{\ell}(G,\mathcal{U}_{L_{\mathfrak{P}}})=1$. Finally  $h_{\ell}(G, \mathcal{R}_{L_{\mathfrak{P}}})=  [L_{\mathfrak{P}}:K_{p}]$.
\smallskip

vi) Hence, we have: 

$ |  \textrm{H}^{0}(G, L_{\mathfrak{P}}^{\times}/ L_{\mathfrak{P}}^{div})|=
[L_{\mathfrak{P}}:K_{p}]$, \qquad $ |  \textrm{H}^{-1}(G, L_{\mathfrak{P}}^{\times}/ L_{\mathfrak{P}}^{div})|=1$, \qquad 
$h_{\ell}(G, \mathcal{R}_{L_{\mathfrak{P}}})=  [L_{\mathfrak{P}}:K_{p}]$.

As  ${\mathcal{R}_{L_{\mathfrak{P}} }}=\varprojlim_{k}  L_{\mathfrak{P}}^{\times}   \diagup {  L_{\mathfrak{P}}^{\times  \ell^{k}}}=
\mathbb{Z}_{\ell} \otimes L_{\mathfrak{P}}^{\times}/L_{\mathfrak{P}}^{\times div} $  we get
$\textrm{H}^{0}(G, L_{\mathfrak{P}}^{\times}/L_{\mathfrak{P}}^{\times div})=\textrm{H}^{0}_{\ell}(G,\mathcal{R}_{L_{\mathfrak{P}} })$ and we obtain
$$| \mathrm{H}^{0}(G, L_{\mathfrak{P}}^{\times}/L_{\mathfrak{P}}^{\times div})|=|\mathrm{H}^{0}_{\ell}(G,\mathcal{R}_{L_{\mathfrak{P}} })|=[L_{\mathfrak{P}}:K_{p}]$$
But $ h(G, \mathcal{R}_{L_{\mathfrak{P}}})= [L_{\mathfrak{P}}:K_{p}]$ so we deduce:
$$\textrm{H}^{-1}_{\ell}(G,\mathcal{R}_{L_{\mathfrak{P}} })=1. $$
And since $G$ is cyclic, we obtain
$$\textrm{H}^{1}_{\ell}(G,\mathcal{R}_{L_{\mathfrak{P}} })=1. $$

\end{proof}

\begin{cor}
$(\textrm{\textrm{deg}},v)$ is a class field pair, and $A_K=\mathcal{R}_{K_{\mathfrak{p}}}$ satisfies the class field axiom. 
Thus for all Galois $\ell$-extension $L_{\mathfrak{P}} $ of a finite extension $K_{\mathfrak{p}}$ of $\mathbb{Q}_{\mathfrak{p}}$ we get an isomorphism:
$$\textrm{Gal}( L_{\mathfrak{P}}/K_{\mathfrak{p}})^{ab} \simeq \mathcal{R}_{K_{\mathfrak{p}}}/N_{ L_{\mathfrak{P}} /K_{\mathfrak{p}}}
\mathcal{R}_{L_{\mathfrak{P}}}.$$
In particular, we get a one to one correspondence between finite  abelian  $\ell$-extensions of a local field and the closed subgroups of finite index of $\mathcal{R}_{K_{\mathfrak{p}}}$.
\end{cor}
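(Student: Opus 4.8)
The plan is to observe that the objects constructed in \S\S2.2--2.5 are exactly the data required by Neukirch's abstract class field theory, and then to read the reciprocity isomorphism and the Galois correspondence off his fundamental theorem. The first step is to record that $(\textrm{deg},v)$ is a class field pair: the map $\textrm{deg}\colon G\to\mathbb{Z}_{\ell}$ of \S2.3 is a continuous surjective homomorphism with kernel $G_{k^{nr}}$, the valuation $v_{\mathfrak{p}}$ of \S2.4 is henselian with respect to $\textrm{deg}$ by Proposition 2.4.1, and $A_{K_{\mathfrak{p}}}=\mathcal{R}_{K_{\mathfrak{p}}}$ satisfies the class field axiom by Theorem 2.5.1. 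Since $\widehat{k}/k$ is a pro-$\ell$-extension and every $G(L_{\mathfrak{P}}/K_{\mathfrak{p}})$ is a finite $\ell$-group, Neukirch's formalism carries over verbatim with $\widehat{\mathbb{Z}}$ replaced by $\mathbb{Z}_{\ell}$ and Tate cohomology replaced by the $\mathbb{Z}_{\ell}$-cohomology $\textrm{H}^{i}_{\ell}$ of \S1.2, the dictionary being Theorem 1.2.1 and its corollary on Herbrand quotients.

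The second step is to apply Neukirch's reciprocity theorem (Theorem 2.1.2) to a finite Galois $\ell$-extension $L_{\mathfrak{P}}/K_{\mathfrak{p}}$: the Frobenius-lift construction $\sigma\mapsto N_{\Sigma/K_{\mathfrak{p}}}(\pi_{\Sigma})$ yields the isomorphism $r_{L_{\mathfrak{P}}/K_{\mathfrak{p}}}\colon G(L_{\mathfrak{P}}/K_{\mathfrak{p}})^{ab}\overset{\sim}{\longrightarrow}\mathcal{R}_{K_{\mathfrak{p}}}/N_{L_{\mathfrak{P}}/K_{\mathfrak{p}}}\mathcal{R}_{L_{\mathfrak{P}}}$ of the statement; for an infinite Galois $\ell$-extension one passes to the inverse limit over finite subextensions. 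When $L_{\mathfrak{P}}/K_{\mathfrak{p}}$ is abelian the source is all of $G(L_{\mathfrak{P}}/K_{\mathfrak{p}})$, so $N_{L_{\mathfrak{P}}/K_{\mathfrak{p}}}\mathcal{R}_{L_{\mathfrak{P}}}$ is a closed subgroup of $\mathcal{R}_{K_{\mathfrak{p}}}$ of index exactly $[L_{\mathfrak{P}}:K_{\mathfrak{p}}]$.

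The third step is to deduce the one-to-one correspondence. The functoriality of the maps $r_{L_{\mathfrak{P}}/K_{\mathfrak{p}}}$ under inflation and norms, which is part of Theorem 2.1.2, makes $M\mapsto N_{M/K_{\mathfrak{p}}}\mathcal{R}_{M}$ an inclusion-reversing bijection between intermediate fields of a fixed finite abelian $L_{\mathfrak{P}}/K_{\mathfrak{p}}$ and the subgroups of $\mathcal{R}_{K_{\mathfrak{p}}}$ containing $N_{L_{\mathfrak{P}}/K_{\mathfrak{p}}}\mathcal{R}_{L_{\mathfrak{P}}}$; letting $L_{\mathfrak{P}}$ range over all finite abelian $\ell$-extensions gives an injective inclusion-reversing assignment into the set of closed subgroups of finite index of $\mathcal{R}_{K_{\mathfrak{p}}}$ --- and these are exactly the open subgroups, since by Jaulent's description recalled in \S2.4 the group $\mathcal{R}_{K_{\mathfrak{p}}}$ (namely $U_{\mathfrak{p}}^{1}\cdot\pi_{\mathfrak{p}}^{\mathbb{Z}_{\ell}}$, resp. $\mu_{\mathfrak{p}}\cdot\pi_{\mathfrak{p}}^{\mathbb{Z}_{\ell}}$) is a compact $\mathbb{Z}_{\ell}$-module, hence profinite. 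Surjectivity of this assignment --- the existence theorem --- I would obtain from Neukirch's abstract existence statement, using that every open subgroup of $\mathcal{R}_{K_{\mathfrak{p}}}$ contains $\mathcal{R}_{K_{\mathfrak{p}}}^{\ell^{n}}$ for some $n$ and that $\mathcal{R}_{K_{\mathfrak{p}}}^{\ell^{n}}$ itself contains the norm group of a suitable finite abelian $\ell$-extension, so that every open subgroup contains, hence by the bijection equals, a norm group.

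I expect the main obstacle to be not the bookkeeping above but verifying that Neukirch's proof of Theorem 2.1.2 genuinely survives the passage from $\widehat{\mathbb{Z}}$-coefficients to $\mathbb{Z}_{\ell}$-coefficients: one must check that the construction of the Frobenius lift, the computation of $\textrm{H}^{-1}$ and $\textrm{H}^{0}$ of the valuation module, and the dimension-shifting argument underlying it remain valid after tensoring with $\mathbb{Z}_{\ell}$. By Theorem 1.2.1 this should be routine, but it is where the real content lies, and I would want to spell it out rather than gesture at it. The other delicate point is the existence-theorem half of the final assertion: the abstract formalism on its own produces only the bijection with norm groups, and seeing that these exhaust the open subgroups of $\mathcal{R}_{K_{\mathfrak{p}}}$ really uses its compactness together with the local construction of enough abelian $\ell$-extensions.
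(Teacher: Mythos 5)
Your proposal is correct and follows exactly the route the paper intends: the corollary is stated without a separate proof precisely because it is the formal consequence of Proposition 2.4.1 (henselianity), Theorem 2.5.1 (the class field axiom), and Neukirch's abstract reciprocity theorem 2.1.2, which is what you assemble. Your additional remarks on the existence-theorem half and on checking that Neukirch's arguments survive the replacement of $\widehat{\mathbb{Z}}$ by $\mathbb{Z}_{\ell}$ are sound points of diligence rather than a different method.
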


\section{Global $\ell$-adic class field theory}

\subsection{Introduction}

The fundamental global $\ell$-adic class field theory is the following:
\begin{theorem}
~\cite[theorem 2.3]{Ja1}
Given a number field $ K$, the reciprocity map induces a continuous isomorphism between  the $\ell$-group of ideles $ \mathcal{J}_K$ of $K$ and the Galois group $ G_K^{ab}=\textrm{Gal}(K^{ab}/K)$ of the maximal abelian pro-$\ell$-extension of $K$. The kernel of this morphism is the subgroup $\mathcal{R}_K$ of principal ideles. In this correspondence, the decomposition subgroup $\mathcal{D}_{\mathfrak{p}}$ of a prime $\mathfrak{p}$ of $K$ is the image in $G_K^{ab}$ of the sub-group $\mathcal{R}_{K_{\mathfrak{p}}}$ of $\mathcal{J}_K$; and the inertia sub-group $\mathcal{I}_{\mathfrak{p}}$ is the image of the subgroup of units $\mathcal{U}_{K_{\mathfrak{p}}}$ of $\mathcal{R}_{K_{\mathfrak{p}}}$. The reciprocity map leads to a one to one correspondence between closed sub-modules of $ \mathcal{J}_K$ containing $\mathcal{R}_K$ and abelian $\ell$-extensions of $ K$. Each sub-extension of $K^{ab}$ is the fixed field of a unique closed sub-module of  $\mathcal{J}_K$ containing $\mathcal{R}_K$. In this correspondence, finite and abelian $\ell$-extensions of $K$ are associated to closed sub-modules of finite index of $\mathcal{J}_K$ containing $\mathcal{R}_K$, it means to open sub-modules of $\mathcal{J}_K$ containing $\mathcal{R}_K$.
\end{theorem}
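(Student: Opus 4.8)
The plan is to rederive Theorem 3.1.1 from Neukirch's abstract class field theory, in exact parallel with the local case of \S2. I would set $G=\textrm{Gal}(\widehat K/K)$, where $\widehat K$ is the maximal pro-$\ell$-extension of the number field $K$; take as $G$-module $A=\varinjlim_{L}\mathcal{C}_{L}$, the direct limit of the idele class $\ell$-groups over all finite subextensions $L/K$, so that $A_{L}=\mathcal{C}_{L}$ carries its natural $\textrm{Gal}(L/K)$-action; define a degree map $\textrm{deg}\colon G\to\mathbb{Z}_{\ell}$ by restriction to the cyclotomic $\mathbb{Z}_{\ell}$-extension of $K$ (this is \S3.3); and define a henselian valuation $v\colon \mathcal{C}_{K}\to\mathbb{Z}_{\ell}$ out of the local degree data (\S3.4--3.5), checking axioms ($i$) and ($ii$) just as in Proposition 2.4.1. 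Granting the class field axiom for $A$, Neukirch's abstract reciprocity theorem (Theorem 2.1.2) then produces, for every finite Galois $\ell$-extension $L/K$, an isomorphism $\textrm{Gal}(L/K)^{ab}\simeq\mathcal{C}_{K}/N_{L/K}\mathcal{C}_{L}$. Passing to the projective limit over $L$, and using that $\mathcal{C}_{K}=\mathcal{J}_{K}/\mathcal{R}_{K}$ is a compact $\mathbb{Z}_{\ell}$-module --- for which the $\ell$-adic existence theorem (the norm subgroups exhaust the open submodules) is essentially built in --- yields the reciprocity isomorphism $\mathcal{J}_{K}/\mathcal{R}_{K}\simeq G_{K}^{ab}$. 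The description of the decomposition group $\mathcal{D}_{\mathfrak p}$ as the image of $\mathcal{R}_{K_{\mathfrak p}}$ and of the inertia group $\mathcal{I}_{\mathfrak p}$ as the image of $\mathcal{U}_{K_{\mathfrak p}}$ then follows from the functoriality of the abstract reciprocity map under the localization maps $\mathcal{R}_{K_{\mathfrak p}}\to\mathcal{C}_{K}$, which makes the global map compatible with the local reciprocity maps of Corollary 2.5.1, together with Theorem 2.1.1; and the one-to-one correspondence between closed submodules of $\mathcal{J}_{K}$ containing $\mathcal{R}_{K}$ and abelian pro-$\ell$-extensions of $K$, with finite index matching finite degree, is the translation of infinite Galois theory for $G_{K}^{ab}$ through this topological isomorphism.

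Thus everything reduces to the class field axiom: for cyclic $L/K$, $|\textrm{H}^{0}_{\ell}(G(L/K),\mathcal{C}_{L})|=[L:K]$ and $\textrm{H}^{-1}_{\ell}(G(L/K),\mathcal{C}_{L})=1$. This I would attack in two stages. The first is the Herbrand quotient $h_{\ell}(G,\mathcal{C}_{L})=[L:K]$ (Theorem 3.2.1): pick a finite $G$-stable set $S$ of places of $L$ containing the archimedean places, the places above $\ell$, and the ramified ones, large enough that $\mathcal{J}_{L}=\mathcal{J}_{L}^{S}\cdot\mathcal{R}_{L}$; then $\mathcal{C}_{L}\simeq\mathcal{J}_{L}^{S}/\mathcal{R}_{L,S}$, where $\mathcal{R}_{L,S}=\mathbb{Z}_{\ell}\otimes\mathcal{O}_{L,S}^{\times}$, whence by the multiplicativity of the Herbrand quotient $h_{\ell}(G,\mathcal{C}_{L})=h_{\ell}(G,\mathcal{J}_{L}^{S})/h_{\ell}(G,\mathcal{R}_{L,S})$. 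The numerator splits over the places $v\in S$ of $K$ into semi-local factors $h_{\ell}\big(G,\prod_{w\mid v}\mathcal{R}_{L_{w}}\big)=h_{\ell}(G_{w},\mathcal{R}_{L_{w}})=[L_{w}:K_{v}]$ by Shapiro's lemma and the local computation of Theorem 2.5.1; the denominator is computed from the $\ell$-adic analogue of Dirichlet's $S$-unit theorem (Jaulent's logarithmic embedding), which identifies $\mathcal{R}_{L,S}$, up to finite kernel and cokernel, with the trace-zero submodule of $\bigoplus_{v\in S}\mathbb{Z}_{\ell}[G/G_{w}]$, giving $h_{\ell}(G,\mathcal{R}_{L,S})=\big(\prod_{v\in S}[L_{w}:K_{v}]\big)/[L:K]$. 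Hence $h_{\ell}(G,\mathcal{C}_{L})=[L:K]$, and since trivially $|\textrm{H}^{0}_{\ell}|=h_{\ell}\cdot|\textrm{H}^{-1}_{\ell}|\geq[L:K]$, it remains only to prove the opposite inequality $|\textrm{H}^{0}_{\ell}(G,\mathcal{C}_{L})|\leq[L:K]$, equivalently (by the hexagon and periodicity) $\textrm{H}^{-1}_{\ell}(G,\mathcal{C}_{L})=\textrm{H}^{1}_{\ell}(G,\mathcal{C}_{L})=1$.

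This second inequality is the main obstacle: it has no formal shortcut from the abstract machinery, being the genuinely global input. I would prove it by the Kummer-theoretic route: reduce first to the case $\mu_{\ell^{a}}\subset K$ (where $[L:K]=\ell^{a}$) by the standard cyclotomic device of passing to $K(\zeta_{\ell^{a}})$ and comparing $\textrm{H}^{0}$ along this extension; then enlarge $S$ so that the $\ell$-part of the $S$-ideal class group of $L$ vanishes, so that $\mathcal{C}_{L}/\mathcal{C}_{L}^{\ell^{a}}$ and the relevant cohomology can be bounded, via Kummer duality, by an index of the form $(\mathcal{R}_{L,S}:(\mathcal{R}_{L,S})^{\ell^{a}})$ against a radical group of $S$-units; a counting of $S$-units (the $\ell$-adic $S$-unit theorem again) then gives exactly the bound $[L:K]$. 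An alternative, shorter but less self-contained, is the analytic second inequality, deduced from the non-vanishing at $s=1$ of the Hecke $L$-functions attached to the characters of $\textrm{Gal}(L/K)$. In either approach, care is needed with the passage to $\ell$-adic coefficients: as in the local Theorem 2.5.1, one must keep track of the $\ell$-divisible parts of $\mathcal{J}_{L}$ and $\mathcal{R}_{L}$ and of the identification of $\mathcal{C}_{L}$ with the $\ell$-adic tensor of the idele class group modulo its divisible part, so that the cohomology of $\mathcal{C}_{L}$ is correctly read off from that of its integral counterpart --- this bookkeeping, routine in the local case, must be redone globally.
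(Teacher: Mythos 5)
Your outline reproduces the architecture of Section 3 almost exactly: large $S$ with $\mathcal{J}_L=\mathcal{J}_L^{S}\cdot\mathcal{R}_L$, Shapiro's lemma plus the local Theorem 2.5.1 for $h_{\ell}(G,\mathcal{J}_L^{S})$, the $\ell$-adic $S$-unit theorem for $h_{\ell}(G,\mathcal{E}_L^{S})=\frac{1}{\ell^{n}}\prod_{\mathfrak{p}\in S}n_{\mathfrak{p}}$ (with $n_{\mathfrak{p}}=[L_{\mathfrak{P}}:K_{\mathfrak{p}}]$, as you have it), hence $h_{\ell}(G,\mathcal{C}_L)=[L:K]$, and then the Neukirch formalism (degree via the cyclotomic $\mathbb{Z}_{\ell}$-extension, valuation $v_K=\mathrm{deg}_K\circ[\,\cdot\,,\tilde K/K]$) to assemble the reciprocity isomorphism and the correspondence. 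The one place where you genuinely diverge is the second inequality $\textrm{H}^{1}_{\ell}(G,\mathcal{C}_L)=1$. The paper does not redo the Kummer-theoretic or analytic argument: for $[L:K]=\ell$ it gets $\textrm{H}^{1}_{\ell}(G,\mathcal{J}_L)=1$ from the local axiom and reduces the vanishing of $\textrm{H}^{-1}_{\ell}(G,\mathcal{C}_L)$ to the injectivity of $\textrm{H}^{0}_{\ell}(G,\mathcal{R}_L)\to\textrm{H}^{0}_{\ell}(G,\mathcal{J}_L)$, i.e.\ to an $\ell$-adic Hasse norm theorem (Theorem 3.3.2), which is deduced directly from the \emph{classical} Hasse norm theorem by writing a principal $\ell$-adic idele as $\bar x\cdot y^{\ell}$; the general degree $\ell^{n}$ then follows by inflation--restriction. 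So the paper imports the hard global input from classical class field theory, whereas you propose to re-prove it $\ell$-adically from scratch via the cyclotomic reduction, Kummer duality and $S$-unit counting (or analytically). Your route is self-contained but substantially longer and duplicates work the paper deliberately black-boxes; the paper's route is short but makes the rederivation logically dependent on the classical theory. Both are sound. Two small cautions on your sketch: the well-definedness of $v_K$ on $\mathcal{C}_K$ (that principal ideles die under $[\,\cdot\,,\tilde K/K]$) is not automatic from ``local degree data'' --- in the paper it is a separate product-formula computation, Proposition 3.6.2 --- and the passage from the finite-level isomorphisms $\textrm{Gal}(L/K)^{ab}\simeq\mathcal{C}_K/N_{L/K}\mathcal{C}_L$ to the statement that \emph{every} open submodule of $\mathcal{J}_K$ containing $\mathcal{R}_K$ is a norm group (the existence theorem) still needs the compactness of $\mathcal{C}_K$ to be established, not merely invoked.
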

Our goal is to prove the existence of the reciprocity map in the global case using Neukirch's abstract theory. We now define all necessary ingredients to obtain the main theorem of $\ell$-adic class field theory: Theorem 2.

\subsection{The Herbrand quotient}

\begin{lemma}
Let $L/K$ be a finite extension of number fields, then the injection of  $\mathcal{J}_K$ in $\mathcal{J}_L$ induces an injection between their $\ell$-adic idele class
groups: $  \alpha \cdot \mathcal{R}_K   \longmapsto  \; \alpha \cdot \mathcal{R}_L$.
\end{lemma}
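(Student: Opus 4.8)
The plan is to deduce the statement from the single identity $\mathcal{R}_L \cap \mathcal{J}_K = \mathcal{R}_K$, the intersection being taken inside $\mathcal{J}_L$. First note the assignment $\alpha\,\mathcal{R}_K \mapsto \alpha\,\mathcal{R}_L$ is well defined: the inclusion $K^{\times} \hookrightarrow L^{\times}$ stays injective after applying the flat functor $\mathbb{Z}_{\ell} \otimes_{\mathbb{Z}} -$, so $\mathcal{R}_K \hookrightarrow \mathcal{R}_L$, and the diagonal embeddings $\mathcal{R}_K \hookrightarrow \mathcal{J}_K$, $\mathcal{R}_L \hookrightarrow \mathcal{J}_L$ are compatible with $\mathcal{J}_K \hookrightarrow \mathcal{J}_L$. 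The kernel of the induced homomorphism $\mathcal{C}_K \to \mathcal{C}_L$ is then $(\mathcal{R}_L \cap \mathcal{J}_K)/\mathcal{R}_K$, so the displayed identity is exactly what must be shown.

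To prove it I would first pass to the Galois closure $M$ of $L/K$: since $\mathcal{R}_K \subseteq \mathcal{R}_L \subseteq \mathcal{R}_M$ and $\mathcal{J}_K \subseteq \mathcal{J}_L \subseteq \mathcal{J}_M$, the inclusion $\mathcal{R}_M \cap \mathcal{J}_K \subseteq \mathcal{R}_K$ forces $\mathcal{R}_L \cap \mathcal{J}_K \subseteq \mathcal{R}_K$ (the reverse inclusion being trivial), so it suffices to treat $M/K$. Put $G = \textrm{Gal}(M/K)$, acting on $\mathcal{J}_M$ and on $\mathcal{R}_M$. The technical heart is the computation of $G$-invariants: $\mathcal{R}_M^{G} = \mathcal{R}_K$ and $\mathcal{J}_M^{G} = \mathcal{J}_K$. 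The first is immediate, since $(-)^{G}$ is a finite limit and hence commutes with the flat functor $\mathbb{Z}_{\ell} \otimes -$, giving $\mathcal{R}_M^{G} = \mathbb{Z}_{\ell} \otimes (M^{\times})^{G} = \mathbb{Z}_{\ell} \otimes K^{\times} = \mathcal{R}_K$. For the second, for each prime $\mathfrak{p}$ of $K$ the factor $\prod_{\mathfrak{P}\mid\mathfrak{p}} \mathcal{R}_{M_{\mathfrak{P}}}$ is the $G$-module induced from the decomposition group $G_{\mathfrak{P}_0}$ of a fixed $\mathfrak{P}_0\mid\mathfrak{p}$, so its $G$-invariants are $\mathcal{R}_{M_{\mathfrak{P}_0}}^{G_{\mathfrak{P}_0}} = \mathcal{R}_{K_{\mathfrak{p}}}$ — here one uses Jaulent's explicit description $\mathcal{R}_{M_{\mathfrak{P}_0}} \simeq U^{1}\cdot\pi^{\mathbb{Z}_{\ell}}$ (resp. $\mu\cdot\pi^{\mathbb{Z}_{\ell}}$) together with $(M_{\mathfrak{P}_0}^{\times})^{G_{\mathfrak{P}_0}} = K_{\mathfrak{p}}^{\times}$; taking the restricted product over $\mathfrak{p}$ yields $\mathcal{J}_M^{G} = \mathcal{J}_K$.

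Finally I would apply $(-)^{G}$ to the exact sequence $1 \to \mathcal{R}_M \to \mathcal{J}_M \to \mathcal{C}_M \to 1$ of $G$-modules: the kernel of $\mathcal{J}_M^{G} \to \mathcal{C}_M$ is $\mathcal{J}_M^{G} \cap \mathcal{R}_M$, and since every element of $\mathcal{J}_M^{G} = \mathcal{J}_K$ is $G$-fixed, this intersection is contained in $\mathcal{R}_M^{G} = \mathcal{R}_K$; as $\mathcal{R}_K \subseteq \mathcal{J}_K \cap \mathcal{R}_M$ trivially, we get $\mathcal{J}_K \cap \mathcal{R}_M = \mathcal{R}_K$, hence $\mathcal{R}_L \cap \mathcal{J}_K = \mathcal{R}_K$ and the asserted injectivity $\mathcal{C}_K \hookrightarrow \mathcal{C}_L$. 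I expect the only genuine obstacle to be the bookkeeping behind $\mathcal{J}_M^{G} = \mathcal{J}_K$: one must be careful about the agreement of the two descriptions of the local $\ell$-adic groups (the $\varprojlim$ of $\ell^{k}$-quotients versus $\mathbb{Z}_{\ell} \otimes -$) and about the induced-module structure over the decomposition groups. Everything else is the classical Galois-descent argument for idele class groups, and in particular no cohomology-vanishing input is needed for mere injectivity.
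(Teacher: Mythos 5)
Your argument is correct and is essentially the paper's own proof: reduce injectivity to the identity $\mathcal{J}_K\cap\mathcal{R}_L=\mathcal{R}_K$, pass to the Galois closure $M$, and use that elements of $\mathcal{J}_K$ are $G$-fixed together with $\mathcal{R}_M^{G}=\mathbb{Z}_{\ell}\otimes(M^{\times})^{G}=\mathcal{R}_K$ (flatness of $\mathbb{Z}_{\ell}$). The one remark worth making is that the computation you single out as the technical heart, $\mathcal{J}_M^{G}=\mathcal{J}_K$, is not actually needed: only the trivial inclusion $\mathcal{J}_K\subseteq\mathcal{J}_M^{G}$ enters, since then $\mathcal{J}_K\cap\mathcal{R}_M\subseteq(\mathcal{J}_K\cap\mathcal{R}_M)^{G}\subseteq\mathcal{R}_M^{G}=\mathcal{R}_K$, exactly as in the paper.
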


\begin{proof}
The injection of $\mathcal{J}_K$ in $\mathcal{J}_L$ maps $ \mathcal{R}_K$ to $\mathcal{R}_L$ thus the map is well-defined and yields  a homomorphism between $\mathcal{C}_K$ and $\mathcal{C}_L$. To show that this homomorphism is injective it suffices to prove that $\mathcal{J}_K \cap \mathcal{R}_L=\mathcal{R}_K$. Let $M/K$ be the Galois closure of $L/K$, with Galois group  $G$.
We have
\begin{center}
 $\mathcal{J}_K \subseteq \mathcal{J}_L \subseteq \mathcal{J}_M $ \quad and \quad $\mathcal{R}_K \subseteq \mathcal{R}_L \subseteq \mathcal{R}_M$ 
\end{center}
thus 
\begin{center}
$ \mathcal{J}_K \cap \mathcal{R}_L  \subseteq \mathcal{J}_K \cap \mathcal{R}_M  \subseteq   (\mathcal{J}_K \cap \mathcal{R}_M )^{G} \subseteq  \mathcal{J}_K \cap \mathcal{R}_M ^{G} = \mathcal{J}_K \cap \mathcal{R}_K=\mathcal{R}_K.$
\end{center}
\end{proof}

\begin{lemma}
Let $L/K$ be a finite Galois $\ell$-extension, $G$ its Galois group, then the $\ell$-adic idele class group  $\mathcal{C}_L$ of $L$  is canonically a $G$-module and $\mathcal{C}_L^{G}=\mathcal{C}_K.$
\end{lemma}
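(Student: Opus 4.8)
The plan is to verify the two assertions of Lemma 3.2.2 in turn, the first being essentially formal and the second resting on Hilbert 90 for the multiplicative group.

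\textbf{Step 1: $\mathcal{C}_L$ is a $G$-module.} Write $\mathcal{C}_L = \mathcal{J}_L/\mathcal{R}_L$. The absolute Galois action permutes the places of $L$ above each place of $K$, so $G$ acts on the restricted product $\mathcal{J}_L = \prod^{res}_{\mathfrak{P}} \mathcal{R}_{L_{\mathfrak{P}}}$ by $\sigma$ sending the $\mathfrak{P}$-component into the $\sigma\mathfrak{P}$-component (this is the action ``component by component'' already used in \S 2.2 in the local ambient module). Since $\sigma$ carries $L^{\times}$ to $L^{\times}$ it carries $\mathcal{R}_L = \mathbb{Z}_{\ell}\otimes L^{\times}$ into itself, hence descends to $\mathcal{C}_L$. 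I would also note that $\mathcal{J}_L = \bigcup_{K'} \mathcal{J}_{K'}$ and $\mathcal{R}_L = \bigcup_{K'} \mathcal{R}_{K'}$ over finite subextensions $K'/K$, so the action is continuous in the sense required by Neukirch's framework, and $\mathcal{C}_L$ fits the pattern $A = \bigcup_{[K':k]<\infty} A_{K'}$.

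\textbf{Step 2: $\mathcal{C}_L^{G} = \mathcal{C}_K$.} Taking $G$-invariants of $1 \to \mathcal{R}_L \to \mathcal{J}_L \to \mathcal{C}_L \to 1$ gives an exact sequence
\begin{displaymath}
1 \longrightarrow \mathcal{R}_L^{G} \longrightarrow \mathcal{J}_L^{G} \longrightarrow \mathcal{C}_L^{G} \longrightarrow \mathrm{H}^1(G,\mathcal{R}_L).
\end{displaymath}
So it suffices to identify the first two invariant groups and to show the connecting term vanishes. For $\mathcal{J}_L^{G}$: decomposing $\mathcal{J}_L = \prod^{res}_{\mathfrak{p}\in Pl_K}\bigl(\prod_{\mathfrak{P}\mid\mathfrak{p}}\mathcal{R}_{L_{\mathfrak{P}}}\bigr)$ and using that $G$ acts transitively on the $\mathfrak{P}\mid\mathfrak{p}$ with stabiliser the decomposition group $G_{\mathfrak{P}}$, Shapiro's lemma (or a direct fixed-point computation on an induced module) gives $\bigl(\prod_{\mathfrak{P}\mid\mathfrak{p}}\mathcal{R}_{L_{\mathfrak{P}}}\bigr)^{G} \simeq \mathcal{R}_{L_{\mathfrak{P}_0}}^{G_{\mathfrak{P}_0}} = \mathcal{R}_{K_{\mathfrak{p}}}$, the last equality because $L_{\mathfrak{P}_0}/K_{\mathfrak{p}}$ is the local extension with group $G_{\mathfrak{P}_0}$ and $\mathcal{R}_{L_{\mathfrak{P}_0}}$ is its associated local module (whose invariants are $\mathcal{R}_{K_{\mathfrak p}}$ by the local theory of \S 2). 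Hence $\mathcal{J}_L^{G} = \mathcal{J}_K$. For $\mathcal{R}_L^{G}$: $\mathcal{R}_L = \mathbb{Z}_{\ell}\otimes_{\mathbb{Z}} L^{\times}$, and since $\mathbb{Z}_{\ell}$ is flat, $\mathrm{H}^i(G,\mathcal{R}_L) = \mathbb{Z}_{\ell}\otimes \mathrm{H}^i(G,L^{\times})$ for $i\geq 0$ (finite $G$, so the cohomology of $L^\times$ is finite in positive degrees and the tensor commutes with the fixed points in degree $0$ as well, $L^\times$ being the relevant coefficient module); in particular $\mathcal{R}_L^{G} = \mathbb{Z}_{\ell}\otimes (L^{\times})^{G} = \mathbb{Z}_{\ell}\otimes K^{\times} = \mathcal{R}_K$, and by Hilbert's Theorem 90 $\mathrm{H}^1(G,L^{\times}) = 1$, whence $\mathrm{H}^1(G,\mathcal{R}_L) = 1$. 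Feeding these into the exact sequence yields $\mathcal{C}_L^{G} = \mathcal{J}_K/\mathcal{R}_K = \mathcal{C}_K$, and Lemma 3.2.1 guarantees this quotient sits inside $\mathcal{C}_L$ as claimed.

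\textbf{Main obstacle.} The only genuine subtlety is the local invariant computation $\bigl(\prod_{\mathfrak{P}\mid\mathfrak{p}}\mathcal{R}_{L_{\mathfrak{P}}}\bigr)^{G} = \mathcal{R}_{K_{\mathfrak p}}$: one must be careful that the $G$-module structure on the block over $\mathfrak p$ is genuinely the induced module $\mathrm{Ind}_{G_{\mathfrak P_0}}^{G}\mathcal{R}_{L_{\mathfrak P_0}}$, which requires knowing the places above $\mathfrak p$ form a single $G$-orbit and that the completions $L_{\mathfrak P}$ for $\mathfrak P$ in the orbit are $G$-conjugate, together with $\mathcal R_{L_{\mathfrak P_0}}^{G_{\mathfrak P_0}}=\mathcal R_{K_{\mathfrak p}}$ from the local picture. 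Everything else is the standard ``invariants of the idele class group'' argument transported through the exact flat functor $\mathbb{Z}_{\ell}\otimes -$, and I expect no difficulty there.
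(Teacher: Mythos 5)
Your argument is essentially the same as the paper's: both take $G$-invariants of $1\to\mathcal{R}_L\to\mathcal{J}_L\to\mathcal{C}_L\to 1$, identify $\mathcal{R}_L^{G}=\mathbb{Z}_{\ell}\otimes(L^{\times})^{G}=\mathcal{R}_K$ by flatness of $\mathbb{Z}_{\ell}$, and kill the connecting term via Hilbert 90 together with the compatibility $\mathrm{H}^{i}_{\ell}=\mathbb{Z}_{\ell}\otimes\mathrm{H}^{i}$ (Theorem 1.2.1). The only difference is that you spell out $\mathcal{J}_L^{G}=\mathcal{J}_K$ via the induced-module decomposition over each prime of $K$, a step the paper simply asserts; this is a correct and welcome addition rather than a different route.
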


\begin{proof}
$\mathcal{J}_L$ is a  $G$-module which  contains $\mathcal{R}_L$ as a sub-$G$-module. The action $ ( \sigma, \alpha \cdot \mathcal{R}_L ) \mapsto  \sigma(\alpha)  \cdot \mathcal{R}_L$ endows  $\mathcal{C}_L$ with a $G$-module structure. As we have the exact sequence:
\begin{center}
$1 \longrightarrow \mathcal{R}_L \longrightarrow \mathcal{J}_L \longrightarrow \mathcal{C}_L \longrightarrow 1$
\end{center}
we obtain:
\begin{center}
$1 \longrightarrow \mathcal{R}_L^G \longrightarrow \mathcal{J}_L^G \longrightarrow \mathcal{C}_L^G  \longrightarrow \textrm{H}^{1}_{\ell}(G,\mathcal{R}_L).$
\end{center}

\noindent{But }$ \mathcal{R}_L^{G}={(\mathbb{Z}_{\ell} \otimes L^{\times})}^{G}=\mathbb{Z}_{\ell} \otimes {( L^{\times})}^{G}= \mathcal{R}_{K}$ and $\mathcal{J}_L^G=\mathcal{J}_K$. Theorem 1.0.1 and  Hilbert's theorem 90 imply $ \textrm{H}^{1}_{\ell}(G,\mathcal{R}_L)=1$  and we are done.
\end{proof}
\smallskip

\begin{theorem} The Herbrand quotient of the $\ell$-adic idele class group. 

\noindent{Let $L/K$ be a Galois cyclic $\ell$-extension of finite degree $\ell^{n}$, $G$  its Galois group then we have}
\begin{center}
$ h_{\ell}(G,\mathcal{C}_L)= \dfrac{ |\textrm{H}_{\ell}^{0}(G,\mathcal{C}_L| }{|\textrm{H}_{\ell}^{1}(G,\mathcal{C}_L)|}=\ell^{n}$.
\end{center}
In particular $ (\mathcal{C}_K:N_{L/K}\mathcal{C}_L)  \ge \ell^n$.
\end{theorem}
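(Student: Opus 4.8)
The strategy mirrors the classical computation of the Herbrand quotient of the idele class group (as in Neukirch–Schmidt–Wingberg or Lang), adapted to the $\ell$-adified setting, using Corollary 1.2.2 repeatedly. The plan is to work with the exact sequence of $G$-modules
$$1 \longrightarrow \mathcal{R}_L \longrightarrow \mathcal{J}_L \longrightarrow \mathcal{C}_L \longrightarrow 1,$$
so that $h_\ell(G,\mathcal{C}_L)=h_\ell(G,\mathcal{J}_L)/h_\ell(G,\mathcal{R}_L)$, provided all three quotients are defined. I would first compute $h_\ell(G,\mathcal{R}_L)$: since $\mathcal{R}_L=\mathbb{Z}_\ell\otimes L^\times$, and $L^\times$ fits (via the sequence $1\to\mathcal{O}_L^\times\to L^\times\to P_L\to 1$ with $P_L$ the principal ideals) into the standard unit–ideal analysis, one can also proceed more directly: $\mathcal{R}_L$ relates to the $S$-units of $L$ for a suitably large finite $G$-stable set $S$ of places containing all places above $\ell$, all infinite places, and all ramified places, chosen large enough that $\mathcal{J}_L=\mathcal{J}_L^S\cdot\mathcal{R}_L$ (where $\mathcal{J}_L^S$ is the $S$-idele subgroup). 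This is where the bulk of the work lies.

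Next I would handle $h_\ell(G,\mathcal{J}_L^S)$. The $\ell$-adic $S$-idele group decomposes as a finite product $\mathcal{J}_L^S\simeq\prod_{\mathfrak{p}\in S}\mathcal{R}_{L_{\mathfrak{p}}}\times\prod_{\mathfrak{p}\notin S}\mathcal{U}_{L_{\mathfrak{p}}}$, and grouping the factors over each place $\mathfrak{p}$ of $K$ (so that $G$ permutes the places above $\mathfrak{p}$ transitively), Shapiro's lemma gives $h_\ell$ of the $\mathfrak{p}$-block equal to $h_\ell(G_{\mathfrak{P}},\mathcal{R}_{L_{\mathfrak{P}}})$ for $\mathfrak{p}\in S$ and $h_\ell(G_{\mathfrak{P}},\mathcal{U}_{L_{\mathfrak{P}}})$ for $\mathfrak{p}\notin S$, where $G_{\mathfrak{P}}$ is the decomposition group. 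By the local computation already carried out in the proof of Theorem 2.5.1 (step v), $h_\ell(G_{\mathfrak{P}},\mathcal{U}_{L_{\mathfrak{P}}})=1$ and $h_\ell(G_{\mathfrak{P}},\mathcal{R}_{L_{\mathfrak{P}}})=[L_{\mathfrak{P}}:K_{\mathfrak{p}}]$ (the local degree). Hence $h_\ell(G,\mathcal{J}_L^S)=\prod_{\mathfrak{p}\in S}[L_{\mathfrak{P}}:K_{\mathfrak{p}}]$.

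For the $S$-unit part, I would use the $\ell$-adic analogue of Dirichlet's unit theorem: $\mathbb{Z}_\ell\otimes\mathcal{O}_{L,S}^\times$ is, up to a finite module (which contributes trivially to $h_\ell$ by Corollary 1.2.2(i)), a free $\mathbb{Z}_\ell$-module with a $G$-equivariant embedding into $\prod_{\mathfrak{p}\in S}\mathbb{Z}_\ell$ (the "logarithmic" map sending a unit to its vector of valuations at places of $S$) whose image is of finite index in the trace-zero hyperplane $\{(x_{\mathfrak{p}}): \sum x_{\mathfrak{p}}=0\}$. Comparing Herbrand quotients via the exact sequence $0\to(\text{trace-zero part})\to\bigoplus_{\mathfrak{p}\in S}\mathbb{Z}_\ell[G/G_{\mathfrak{P}}]\to\mathbb{Z}_\ell\to 0$ and the triviality of $h_\ell$ on finite modules, one gets $h_\ell(G,\mathcal{R}_L)=h_\ell(G,\mathbb{Z}_\ell\otimes\mathcal{O}_{L,S}^\times)\cdot(\text{ideal-class contribution})=\dfrac{\prod_{\mathfrak{p}\in S}[L_{\mathfrak{P}}:K_{\mathfrak{p}}]}{[L:K]}$, the class-group factor being trivial since the $\ell$-class group is finite. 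Dividing, $h_\ell(G,\mathcal{C}_L)=h_\ell(G,\mathcal{J}_L^S)/h_\ell(G,\mathcal{R}_L)=[L:K]=\ell^n$. Finally, since $H^{-1}_\ell(G,\mathcal{C}_L)\simeq H^1_\ell(G,\mathcal{C}_L)$ by cyclicity (Corollary 1.2.2(iii)) and $h_\ell=|H^0_\ell|/|H^{-1}_\ell|$, we get $|H^0_\ell(G,\mathcal{C}_L)|=\ell^n\cdot|H^1_\ell(G,\mathcal{C}_L)|\ge\ell^n$, and since $H^0_\ell(G,\mathcal{C}_L)=\mathcal{C}_K/N_{L/K}\mathcal{C}_L$, the final inequality $(\mathcal{C}_K:N_{L/K}\mathcal{C}_L)\ge\ell^n$ follows.

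The main obstacle is the $S$-unit step: establishing the $\ell$-adic Dirichlet unit theorem in a $G$-equivariant form (that $\mathbb{Z}_\ell\otimes\mathcal{O}_{L,S}^\times$ has the right $\mathbb{Q}_\ell[G]$-module structure, namely that it is $\mathbb{Q}_\ell[G]$-isomorphic after tensoring with $\mathbb{Q}_\ell$ to the trace-zero submodule of $\bigoplus_{\mathfrak{p}\in S}\mathbb{Q}_\ell[G/G_{\mathfrak{P}}]$), and checking that the passage $L^\times\rightsquigarrow\mathcal{R}_L=\mathbb{Z}_\ell\otimes L^\times$ together with $\mathcal{J}_L=\mathcal{J}_L^S\cdot\mathcal{R}_L$ and $\mathcal{J}_L^S\cap\mathcal{R}_L=\mathbb{Z}_\ell\otimes\mathcal{O}_{L,S}^\times$ behaves well — i.e.\ that $\ell$-adification commutes with the relevant intersections and quotients. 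One must also be slightly careful that all Herbrand quotients in sight are genuinely defined (finiteness of the $H^0_\ell$ and $H^{-1}_\ell$), which follows because each building block is either finite or finitely generated over $\mathbb{Z}_\ell$.
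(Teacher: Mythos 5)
Your proposal is correct and follows essentially the same route as the paper: reduce to the exact sequence $1\to\mathcal{E}_L^S\to\mathcal{J}_L^S\to\mathcal{C}_L\to 1$ for a large $S$, compute $h_\ell(G,\mathcal{J}_L^S)=\prod_{\mathfrak{p}\in S}n_{\mathfrak{p}}$ via Shapiro's lemma, the vanishing of unramified unit cohomology and the local class field axiom, and compute $h_\ell(G,\mathcal{E}_L^S)=\ell^{-n}\prod_{\mathfrak{p}\in S}n_{\mathfrak{p}}$ via the ($\ell$-adified) Dirichlet $S$-unit theorem and the Herbrand character argument. Your step~3 is in fact spelled out more explicitly than the paper's (which only gestures at the representation-character argument), and your opening remark that $h_\ell(G,\mathcal{J}_L)$ and $h_\ell(G,\mathcal{R}_L)$ may not be individually defined is precisely why both you and the paper pass to the $S$-versions.
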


\begin{proof}
The proof runs in four steps.

\noindent\textbf{Step 1:}

\noindent We show in this part that for $S$ a big enough set of primes we have: $$ \mathcal{J}_K=\mathcal{J}_K^S \cdot \mathcal{R}_K 
\quad \text{where} \quad  \mathcal{J}_{K}= \bigcup_{S} \mathcal{J}_{K}^{S}
\quad \text{and} \quad 
\mathcal{J}_{K}^{S}=\prod_{\mathfrak{p} \in S } (\mathcal{R}_{K_{\mathfrak{p}}} )\prod_{\mathfrak{p} \not\in S } (\mathcal{U}_{K_{\mathfrak{p}}})$$
where $S$ runs through finite sets of primes of $K$. 

\noindent{$\mathcal{D}_K:= \bigoplus_{\mathfrak{p} \nmid \infty} \mathfrak{p}^{\mathbb{Z}_{\ell}} \bigoplus_{\mathfrak{p} \mid \infty} \mathfrak{p}^{\mathbb{Z}_{\ell} / 2 \cdot \mathbb{Z}_{\ell}}.$  We consider the topological direct sum: $\mathcal{J}_K=\mathcal{D}_K \oplus \mathcal{U}_K $ and the map:}
\begin{align*}
  \phi\colon  \mathcal{J}_K  &\longrightarrow \mathcal{D}_K  \\
           \alpha = ( \alpha_{\mathfrak{p}} ) &\longmapsto  \prod_{\mathfrak{p} \nmid \infty} \mathfrak{p}^{v_{\mathfrak{p}}( \alpha_{\mathfrak{p}} )}  
\end{align*}
 This homomorphism is surjective and its kernel is $\mathcal{J}_K^{S_{\infty}}$, where $S_{\infty}=\{ \mathfrak{p} \mid \infty \}$. So we get the isomorphism: $ \mathcal{J}_K/\mathcal{J}_K^{S_{\infty}} \simeq \mathcal{D}_K.$
Let $\mathcal{P}_K$ be the  the image of $\mathcal{R}_K$ in $\mathcal{D}_K$, we get: 
$ \mathcal{R}_K \cdot \mathcal{J}_K^{S_{\infty}} /\mathcal{J}_K^{S_{\infty}} \simeq \mathcal{P}_K$
That is why:
$$\mathcal{J}_K /\mathcal{R}_K \cdot \mathcal{J}_K^{S_{\infty}} \simeq \mathcal{D}_K /\mathcal{P}_K \simeq \mathcal{C}{\ell}_K$$ where $ \mathcal{C}{\ell}_K$ is the class group of divisors, ~\cite[p.~364]{Ja1}.
In particular $\mathcal{D}_K /\mathcal{P}_K $ is finite.

\noindent{Let  $a_1, \, a_2, \ldots, \,a_h$  be representatives for classes in $\mathcal{D}_K /\mathcal{P}_K $;  let $\mathfrak{p}_1, \ldots ,\mathfrak{p}_l$ be the primes which divide  $a_1,a_2, \ldots, a_h$ and let  $S:= S_{\infty} \cup \{ \mathfrak{p}_1, \ldots ,\mathfrak{p}_l \}$}
Let $\alpha=(\alpha_{\mathfrak{p}}) \in \mathcal{J}_K$, we write $\phi(\alpha)=a_i \cdot d$ where $d \in \mathcal{R}_K$. 
Then $\alpha \cdot d^{-1} \in \mathcal {J}_{K}^S$.
\medskip

\noindent\textbf{Step 2: the cohomology of $\mathcal{J}_L$ and $\mathcal{J}_L^S$}

\noindent{We first define for $L/K$ a  finite Galois extension (whose Galois group is $G$): }
$$\mathcal{J}_L^{\mathfrak{p}}=\prod_{ \mathfrak{P} \mid \mathfrak{p} }  \mathcal{R}_{L_{\mathfrak{P}}},  \quad \mathcal{U}_L^{\mathfrak{p}}=\prod_{ \mathfrak{P} \mid \mathfrak{p} }  \mathcal{U}_{L_{\mathfrak{P}}}$$
for each prime $\mathfrak{p}$ of $K$.
As an element of $G$ permutes the primes over $\mathfrak{p}$, $\mathcal{J}_L^{\mathfrak{p}}$ and 
$\mathcal{U}_L^{\mathfrak{p}}$ are $G$-modules and  we have:
\begin{center}
$\mathcal{J}_L=\prod_{\mathfrak{p}  } \mathcal{J}_L^{\mathfrak{p}},
\quad
\mathcal{U}_L=\prod_{\mathfrak{p}} \mathcal{U}_L^{\mathfrak{p}}$
\end{center}
Let  $ \mathfrak{P} $ be  a fixed prime of $L$ over $\mathfrak{p}$, $ G_{\mathfrak{P}}= \textrm{Gal}(L_{\mathfrak{P}} / K_{\mathfrak{p}})      \subseteq  G$ the decomposition subgroup and  $\sigma$ run through the cosets $G/ G_{\mathfrak{P}}$ then:  $\sigma(\mathfrak{P})$ runs through the different primes of $L$ over $\mathfrak{p}$, and

$$ \mathcal{J}_L^{\mathfrak{p}}=\prod_{ \sigma \in G/ G_{\mathfrak{P}} }  \mathcal{R}_{L_{\sigma(\mathfrak{P}})}=
\prod_{ \sigma \in G/ G_{\mathfrak{P}} } \sigma(  \mathcal{R}_{L_{\mathfrak{P}}}), \quad
\mathcal{U}_L^{\mathfrak{p}}=\prod_{ \sigma \in G/ G_{\mathfrak{P}} }  \mathcal{U}_{L_{\sigma(\mathfrak{P}})}$$

\noindent{Thus  we deduce that }
$ \mathcal{J}_L^{\mathfrak{p}}$ et $ \mathcal{U}_L^{\mathfrak{p}}$ are induced $G$-modules and
$$ \mathcal{J}_L^{\mathfrak{p}}=\textrm{Ind}_{G_{\mathfrak{P}}}^G(\mathcal{R}_{L_{\mathfrak{P}}}), \quad
\mathcal{U}_L^{\mathfrak{p}}=\textrm{Ind}_ {G_{\mathfrak{P}}}^{G}(\mathcal{U}_{L_{\mathfrak{P}}}). $$

\noindent{We write for  $S$  a set of primes of $K$ : $J_L^{S}:=J_L^{\overline{S}}$, where $\overline{S}$ is the set of primes of $L$ over $S$. Then we have the decomposition of $G$-modules:
$$\mathcal{J}_{L}^{S}=\prod_{\mathfrak{p} \in S }( \prod_{ \mathfrak{P} \mid \mathfrak{p} }  \mathcal{R}_{L_{\mathfrak{P}}}) \prod_{\mathfrak{p} \not\in S } (\prod_{\mathfrak{P} \mid \mathfrak{p}} \mathcal{U}_{L_{\mathfrak{P}}}) 
=\prod_{\mathfrak{p} \in S } \mathcal{J}_L^{\mathfrak{p}} \cdot  \prod_{\mathfrak{p} \not\in S } \mathcal{U}_L^{\mathfrak{p}}.$$}
\medskip

\begin{proposition}
Let  $S$ be the set of primes containing the infinite and the ramified primes, let  $\mathfrak{P}$ be a prime of $L$ over $\mathfrak{p}$  and $G_{\mathfrak{P}}$ the decomposition sub-group; then for $i=0,1$ we have: 
$$  \textrm{H}^{i}_{\ell}(G,\mathcal{J}_L^{S}) \simeq \bigoplus_{\mathfrak{p} \in S} \textrm{H}^{i}_{\ell}(G_{\mathfrak{P}}, \mathcal{R}_{L_{\mathfrak{P}}}) \quad  \textrm{and} \quad \textrm{H}^{i}_{\ell}(G,\mathcal{J}_L) \simeq \bigoplus_{\mathfrak{p}} \textrm{H}^{i}_{\ell}(G_{\mathfrak{P}},\mathcal{R}_{L_{\mathfrak{P}}}) $$
\end{proposition}

\begin{proof}
We have $ \mathcal{J}_L^S= \bigoplus_{\mathfrak{p} \in S} \mathcal{J}_L^{\mathfrak{p}} \oplus V \quad
\mathrm{where} \quad  V=\prod_{\mathfrak{p} \not\in S } \mathcal{U}_L^{\mathfrak{p}}.$
That is why we obtain  the isomorphism  for $i=0,1$:
$$\textrm{H}^{i}_{\ell}(G,\mathcal{J}_L)= \bigoplus_{\mathfrak{p} \in S} \textrm{H}^{i}_{\ell}(G,\mathcal{J}_L^{\mathfrak{p}}) \oplus \textrm{H}^{i}_{\ell}(G,V) \quad 
\textrm{and the injection} \quad \textrm{H}^{i}_{\ell}(G,V) \longrightarrow \prod_{\mathfrak{p} \not\in S} \textrm{H}^{i}_{\ell}(G,\mathcal{U}_L^{\mathfrak{p}}).$$
Moreover by the previous proposition $\mathcal{J}_L^{\mathfrak{p}}$ and $\mathcal{U}_L^{\mathfrak{p}}$ are induced $G$-modules, so
$$\textrm{H}^{i}_{\ell}(G,\mathcal{J}_L^{\mathfrak{p}}) \simeq  \textrm{H}^{i}_{\ell}(G,M_G^{G_{\mathfrak{P}}}  \mathcal{R}_{L_{\mathfrak{P}}}) \simeq \textrm{H}^{i}_{\ell}(G_{\mathfrak{P}},\mathcal{R}_{L_{\mathfrak{P}}})$$
$$\textrm{H}^{i}_{\ell}(G,\mathcal{U}_L^{\mathfrak{p}}) \simeq  \textrm{H}^{i}_{\ell}(G,M_G^{G_{\mathfrak{P}}}  \mathcal{U}_{L_{\mathfrak{P}}}) \simeq \textrm{H}^{i}_{\ell}(G_{\mathfrak{P}},\mathcal{U}_{L_{\mathfrak{P}}})$$
 Due to the choice of $S$, if $\mathfrak{p} \not\in S$ then $L_{\mathfrak{P}}/K_{\mathfrak{p}}$  is an unramified $\ell$-extension, hence  $ \textrm{H}^{i}_{\ell}(G_{\mathfrak{P}},\mathcal{U}_{L_{\mathfrak{P}}})=1$ by the next proposition.
\end{proof}
\smallskip

\begin{proposition}
Let  $L_{\mathfrak{P}}/K_{\mathfrak{p}}$ be an unramified $\ell$-extension then we have:
\begin{center}
$ \textrm{H}^{i}_{\ell}(\textrm{Gal}(L_{\mathfrak{P}}/K_{\mathfrak{p}}),{\mathcal{U}}_{L_{\mathfrak{P}}}))=1$
for $i=0, 1$.
\end{center}
\end{proposition}

\begin{proof}
The exact sequence: $ 1 \longrightarrow \mathcal{U}_{L_{\mathfrak{P}}} \longrightarrow \mathcal{R}_{L_{\mathfrak{P}}} \longrightarrow\mathbb{Z}_{\ell} \longrightarrow 1 $ induces a long sequence of  cohomology: 
$$ 1 \longrightarrow \mathcal{U}_{K_{\mathfrak{p}}} \longrightarrow \mathcal{R}_{K_{\mathfrak{p}}} \longrightarrow
\mathbb{Z}_{\ell} \longrightarrow \textrm{H}^{1}_{\ell}(\textrm{Gal}(L_{\mathfrak{P}}/K_{\mathfrak{p}}), \mathcal{U}_{L_{\mathfrak{P}}}) $$
where the map $  \mathcal{R}_{L_{\mathfrak{P}}} \longrightarrow \mathbb{Z}_{\ell}$ is the restriction of the valuation $v_{\mathfrak{P}} $. As $L_{\mathfrak{P}}/K_{\mathfrak{p}}$ is an unramified extension: $e_{L_{\mathfrak{P}}/K_{\mathfrak{p}}}=1$; this restriction is surjective so:  
\begin{center}
 $\textrm{H}^{1}_{\ell}(\textrm{Gal}(L_{\mathfrak{P}}/K_{\mathfrak{p}}),\mathcal{U}_{L_{\mathfrak{P}}}))=1. $ 
\end{center}
But due to the proof  p. $9$, we have
$$h_{\ell}(\textrm{Gal}(L_{\mathfrak{P}}/K_{\mathfrak{p}}),\mathcal{U}_{L_{\mathfrak{P}}}))=1 \quad
\textrm{thus} \quad \textrm{H}^{0}_{\ell}(\textrm{Gal}(L_{\mathfrak{P}}/K_{\mathfrak{p}}),\mathcal{U}_{L_{\mathfrak{P}}}))=1.$$
\end{proof}
\smallskip

\noindent{Consequently we obtain}
$$\textrm{H}^{i}_{\ell}(G,\mathcal{J}_L^{S}) \simeq \bigoplus_{\mathfrak{p} \in S} \textrm{H}^{i}_{\ell}(G_{\mathfrak{P}}, \mathcal{R}_{L_{\mathfrak{P}}}) $$
and 
$$ \mathrm{H}^{i}_{\ell}(G,\mathcal{J}_L)=\varprojlim_{S}  \textrm{H}^{i}_{\ell}(G,\mathcal{J}_L^S) =\varprojlim_{S} \bigoplus_{\mathfrak{p}} \mathrm{H}^{i}_{\ell}(G_{\mathfrak{P}},\mathcal{R}_{L_{\mathfrak{P}}})= \bigoplus_{\mathfrak{p}} \mathrm{H}^{i}_{\ell}(G_{\mathfrak{P}},\mathcal{R}_{L_{\mathfrak{P}}})$$
\medskip

\noindent\textbf{Step 3: }

\noindent{The $\ell$-group of $S$-units is $\mathcal{E}_{K}^{S}=\mathcal{R}_K \cap \mathcal{J}_K^S$. Let $S$ be a set of primes containing the infinite and the ramified primes, we show: }
\begin{center}
$h_{\ell}(G,\mathcal{E}_{L}^{S})= \dfrac{1}{\ell^{n}}  \prod_{\mathfrak{p} \in S} n_{\mathfrak{p}}, $
\end{center}
where $ n_{\mathfrak{p}}$ denotes the index of the decomposition sub-group. 
We are done as that the Herbrand quotient,  linked to a Galois module in a cyclic extension,  only depends to the character of the representation which is associated: it gives the structure of $G$-module up to a finite; and we use the property which says that if you consider a sub-module of finite index then its Herbrand quotient is trivial. This character is given by the Herbrand's representation character. 
\medskip

\noindent\textbf{Step 4: conclusion }

\noindent{Let $S$ be the set of primes described before, then we have: }
$$ 1 \longrightarrow \mathcal{E}_L^S \longrightarrow \mathcal{J}_L^S \longrightarrow \mathcal{J}_L^S \cdot \mathcal{R}_L /
\mathcal{R}_L= \mathcal{C}_L \longrightarrow 1.$$ 
As $L/K$ is a cyclic $\ell$-extension we get: $$ h_{\ell}(G,\mathcal{J}_L^S )=h_{\ell}(G, \mathcal{E}_K^S) \cdot h_{\ell}(G,\mathcal{C}_L).$$
But $$\textrm{H}^{i}_{\ell}(G,\mathcal{J}_L^{S}) \simeq \prod_{\mathfrak{p} \in S} \textrm{H}^{i}_{\ell} (G_{\mathfrak{P}}, \mathcal{R}_{L_{\mathfrak{P}}})$$
for $i=0,1$. From the local class field axiom we get:
$$| \textrm{H}^{0}_{\ell}(G_{\mathfrak{P}},\mathcal{R}_{L_{\mathfrak{P}}})|=n_{\mathfrak{p}} \quad \textrm{and} \quad
| \textrm{H}^{1}_{\ell}(G_{\mathfrak{P}},\mathcal{R}_{L_{\mathfrak{P}}})|=1$$
Thus, $ h_{\ell}(G,\mathcal{J}_L^S )=\prod_{\mathfrak{p} \in S} n_{\mathfrak{p}}$. 
By step 3: $h_{\ell}(G,\mathcal{E}_{L}^{S})= \frac{1}{\ell^{n}}  \prod_{\mathfrak{p} \in S} n_{\mathfrak{p}}$,  so $ h_{\ell}(G, \mathcal{C}_L)= \ell^{n}$
\end{proof}
\medskip

\subsection{The class field axiom}

This subsection is devoted to prove:

\begin{theorem}\textbf{The class field axiom}
Let $L/K$ be a cyclic $\ell$-extension of algebraic number fields then we have:
\begin{center}
$  |\textrm{H}^{i}_{\ell}(G(L / K),  \mathcal{C}_{L})|= \left\{
    \begin{array}{ll}
     [L:K] & \mbox{ for } i=0  \\
      1 & \mbox{ for } i=1
    \end{array}
\right.$
\end{center}
\end{theorem}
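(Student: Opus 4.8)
The strategy is the classical two-inequality argument, transported to the $\ell$-adic setting. By Theorem 3.2.1 we already know the Herbrand quotient: $h_\ell(G,\mathcal{C}_L) = \ell^n = [L:K]$. Hence it suffices to prove the single statement $\textrm{H}^1_\ell(G,\mathcal{C}_L) = 1$; the value of $|\textrm{H}^0_\ell(G,\mathcal{C}_L)| = [L:K]$ then follows immediately by dividing. So the whole problem reduces to showing the vanishing of $\textrm{H}^1_\ell(G,\mathcal{C}_L)$ for every cyclic $\ell$-extension $L/K$.

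\textbf{Reduction to the cyclotomic case.} First I would reduce, by a standard induction on $[L:K]$ using the inflation-restriction sequence (or rather, since $G$ is an $\ell$-group, a dévissage through a subextension of degree $\ell$), to the case where $G$ is cyclic of order $\ell$. Then I would invoke the key trick of global class field theory: to kill $\textrm{H}^1_\ell(G,\mathcal{C}_L)$ it is enough to enlarge $L$ to $L' = L \cdot K(\zeta)$ for a suitable $\ell$-power root of unity $\zeta$ (if $\zeta_\ell \notin K$, adjoin it — this is a step of degree prime to $\ell$, or rather one must work with $\zeta_{\ell^m}$ and argue the degree is controlled) and to compare via the inflation map $\textrm{H}^1_\ell(G,\mathcal{C}_L) \hookrightarrow \textrm{H}^1_\ell(\textrm{Gal}(L'/K'),\mathcal{C}_{L'})$ where $K' = K(\zeta)$. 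One then works inside an extension containing enough roots of unity.

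\textbf{The core estimate.} With $\mu_{\ell^m} \subset K$, for $G$ cyclic of order $\ell^m$ the group $\textrm{H}^1_\ell(G,\mathcal{C}_L)$ is isomorphic (via a cup product with a generator, exactly as in Corollary 1.1.3(iii)) to $\textrm{H}^{-1}_\ell(G,\mathcal{C}_L)$, and one proves the latter is annihilated by counting. Concretely, pick a finite set $S$ of primes containing the infinite primes, the ramified primes, the primes above $\ell$, and large enough that $\mathcal{J}_L = \mathcal{J}_L^S \cdot \mathcal{R}_L$ (Step 1 of Theorem 3.2.1 provides such $S$), and such that $S$ is stable under $G$. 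Then $\mathcal{C}_L = \mathcal{J}_L^S \mathcal{R}_L / \mathcal{R}_L \cong \mathcal{J}_L^S / \mathcal{E}_L^S$. Using Proposition 3.2.1 one has $\textrm{H}^i_\ell(G,\mathcal{J}_L^S) \cong \bigoplus_{\mathfrak{p}\in S} \textrm{H}^i_\ell(G_{\mathfrak{P}},\mathcal{R}_{L_{\mathfrak{P}}})$, which by the local class field axiom (Theorem 2.5.1) has order $\prod_{\mathfrak{p}\in S} n_{\mathfrak{p}}$ in degree $0$ and is trivial in degree $1$. Feeding this and the $S$-unit Herbrand computation (Step 3 of Theorem 3.2.1) into the long exact cohomology sequence of $1 \to \mathcal{E}_L^S \to \mathcal{J}_L^S \to \mathcal{C}_L \to 1$ produces the bound $|\textrm{H}^0_\ell(G,\mathcal{C}_L)| \le [L:K]$; combined with the Herbrand quotient $= [L:K]$ this forces $|\textrm{H}^0_\ell(G,\mathcal{C}_L)| = [L:K]$ and $\textrm{H}^1_\ell(G,\mathcal{C}_L) = 1$. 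The delicate point throughout is the reduction step to the case with enough roots of unity: one must make sure that adjoining $\zeta$ does not disturb the $\ell$-group hypothesis and that the inflation map is genuinely injective, which rests on the already-established triviality of $\textrm{H}^1_\ell$ of the idele group upstairs. That injectivity (the "first inequality" feeding the "second") is where I expect the main obstacle to lie; the local inputs and the $S$-unit bookkeeping are routine given the earlier results.
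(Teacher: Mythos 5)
Your opening reduction (Herbrand quotient $h_\ell(G,\mathcal{C}_L)=[L:K]$ from Theorem 3.2.1, so it suffices to kill $\textrm{H}^1_\ell$) and your d\'evissage to the degree-$\ell$ case via inflation--restriction both match the paper. But the heart of your argument --- the ``core estimate'' --- has a genuine gap, and it is not the one you flagged. Feeding the local computation of $\textrm{H}^i_\ell(G,\mathcal{J}_L^S)$ and the Herbrand quotient $h_\ell(G,\mathcal{E}_L^S)=\frac{1}{\ell^n}\prod_{\mathfrak{p}\in S} n_{\mathfrak{p}}$ into the long exact sequence of $1\to\mathcal{E}_L^S\to\mathcal{J}_L^S\to\mathcal{C}_L\to 1$ does \emph{not} produce the upper bound $|\textrm{H}^0_\ell(G,\mathcal{C}_L)|\le[L:K]$: that is exactly Step 4 of Theorem 3.2.1, and it yields only the Herbrand quotient again, i.e.\ the \emph{first} inequality $|\textrm{H}^0_\ell(G,\mathcal{C}_L)|\ge[L:K]$. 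The Herbrand quotient of $\mathcal{E}_L^S$ controls the ratio of the two cohomology groups of the $S$-units, not either one individually, so no upper bound on $\textrm{H}^0_\ell(G,\mathcal{C}_L)$ comes out. The classical second inequality requires the separate Chevalley/Artin--Tate index computation via Kummer theory (constructing an explicit open subgroup of $\mathcal{C}_K$ inside the norm group and counting), or analytic input; you have not supplied either. A secondary problem: adjoining $\zeta_{\ell^m}$ forces you through the auxiliary extension $K(\zeta)/K$, whose degree divides $\ell-1$ and is therefore prime to $\ell$; this leaves the pro-$\ell$ framework in which the paper's $\textrm{H}^i_\ell$ is even defined (Definition 1.2.1 assumes $G$ is an $\ell$-group), so the inflation/restriction comparison you want is not available as stated.

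The paper sidesteps all of this. For $[L:K]=\ell$ it writes the Herbrand hexagon for $1\to\mathcal{R}_L\to\mathcal{J}_L\to\mathcal{C}_L\to 1$, gets $\textrm{H}^1_\ell(G,\mathcal{J}_L)=1$ from the local axiom, and reduces the vanishing of $\textrm{H}^{-1}_\ell(G,\mathcal{C}_L)$ to the injectivity of $\textrm{H}^0_\ell(G,\mathcal{R}_L)\to\textrm{H}^0_\ell(G,\mathcal{J}_L)$ --- which is precisely the $\ell$-adic Hasse norm theorem (Theorem 3.3.2), itself deduced by a short tensoring argument from the \emph{classical} Hasse norm theorem. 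In other words, the paper imports the hard global input ready-made rather than re-proving the second inequality $\ell$-adically. If you want a self-contained proof along your lines, you must actually carry out the Kummer-theoretic index computation in the $\ell$-adified setting; as written, your argument is circular at the decisive step.
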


\begin{proof}

\noindent{Since $h_{\ell}(G(L/K),\mathcal{C}_L)=[L:K]= \ell^{n}$, it  suffices to show that $$\textrm{H}^{-1}_{\ell}(G(L/K),  \mathcal{C}_L)=\textrm{H}^{1}_{\ell}(G(L/K),\mathcal{C}_L)=1.$$ We do it by induction on $n$.}

($i$) If $n=0$ then $L=K$ and the result is true.

($ii$) If $n=1$ then $L/K$ is a cyclic extension of prime degree $\ell$. 

\noindent{The exact sequence $1 \longrightarrow \mathcal{R}_L  \longrightarrow \mathcal{J}_L  \longrightarrow \mathcal{C}_L$
leads to the Herbrand  hexagon: }
\begin{displaymath}
\xymatrix{
&\textrm{H}^{0}_{\ell}(G,\mathcal{R}_L) \ar [r] & \textrm{H}^{0}_{\ell}(G,\mathcal{J}_L)  \ar [dr]& \\
 \textrm{H}^{-1}_{\ell}(G,\mathcal{C}_L) \ar[ur] &  &  & \textrm{H}^{0}_{\ell}(G, \mathcal{C}_L)   \ar[dl] \\
&  \textrm{H}^{-1}_{\ell}(G,\mathcal{J}_L) \ar[ul] & \textrm{H}^{-1}_{\ell}(G,\mathcal{R}_L)\ar[l] & \\
}
\end{displaymath}
By prop.3.2.4 we have $\textrm{H}^{i}_{\ell}(G,\mathcal{J}_L^{S}) \simeq \prod_{\mathfrak{p} \in S} \textrm{H}^{i}_{\ell}(G_{\mathfrak{P}}, \mathcal{R}_{L_{\mathfrak{P}}})$.
By the local class field axiom (theorem 2.5.1), we deduce $ \textrm{H}^{1}_{\ell}(G,\mathcal{J}_{L})=1$. Thus it suffices to prove that the map from $ \textrm{H}^{0}_{\ell}(G,\mathcal{R}_L) $ to $ \textrm{H}^{0}_{\ell}(G,\mathcal{J}_L)$ is  injective: this follows from the $\ell$-adic Hasse norm theorem (theorem 3.3.2).
\smallskip

($iii$)If  $n>1$ then $\ell < \ell^{n}$, let $M/K$ be a sub-extension of $L/K$ of prime degree $\ell$.

We have
\begin{center}
$ 1 \longrightarrow \textrm{H}^{1}_{\ell}(G(M/K), \mathcal{C}_M) \longrightarrow  \textrm{H}^{1}_{\ell}(G(L/K), \mathcal{C}_L) \longrightarrow 
 \textrm{H}^{1}_{\ell}(G(L/M), \mathcal{C}_L)  $
\end{center}
Indeed, if $g$ is a normal subgroup of $G$, and $A$ a $G$-module, then the following sequence is exact:
$$0 \longrightarrow \textrm{H}^{1}(G/g,A^g) \longrightarrow \textrm{H}^{1}(G,A) \longrightarrow \textrm{H}^{1}(g,A) $$.

\noindent{By assumption $  \textrm{H}^{1}_{\ell}(G(M/K), \mathcal{C}_M) =1$ as $| G(M/K)|= \ell$, and
$  \textrm{H}^{1}_{\ell}(G(L/M), \mathcal{C}_L) =1$  as $| G(L/M)|= \ell^{n-1} < \ell^{n}$. It follows that $  \textrm{H}^{1}_{\ell}(G(L/K), \mathcal{C}_L) =1$.}
\end{proof}
\medskip

\begin{theorem} \textbf{(The $\ell$-adic Hasse Norm Theorem)}
If $L/K$ is a cyclic extension of prime degree $\ell$, an element of the $\ell$-group of principal ideles is a norm  from $L/K$ if and only if it is a norm everywhere locally, i.e a norm from each completion $ L_{\mathfrak{P}}/K_{\mathfrak{p}}$
where $ \mathfrak{P} \mid \mathfrak{p}$.
\end{theorem}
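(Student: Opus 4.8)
The plan is to mimic the classical proof of the Hasse Norm Theorem, using the cohomological translation that a principal idele $a \in \mathcal{R}_K$ is a global norm from $L/K$ precisely when its class in $\mathcal{R}_K/N_{L/K}\mathcal{R}_L$ is trivial, while it is a local norm everywhere precisely when its image in $\mathcal{R}_{K_\mathfrak{p}}/N_{L_\mathfrak{P}/K_\mathfrak{p}}\mathcal{R}_{L_\mathfrak{P}}$ is trivial for every $\mathfrak{p}$. Concretely, since $G := G(L/K)$ is cyclic of prime order $\ell$, I would identify $\mathcal{R}_K/N_{L/K}\mathcal{R}_L$ with $\textrm{H}^0_\ell(G,\mathcal{R}_L) = \widehat{\textrm{H}}^0_\ell(G,\mathcal{R}_L)$, and likewise $\bigoplus_{\mathfrak{p}} \textrm{H}^0_\ell(G_\mathfrak{P},\mathcal{R}_{L_\mathfrak{P}})$ for the local data. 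The statement to prove then becomes: the natural localization map
$$ \textrm{H}^0_\ell(G,\mathcal{R}_L) \longrightarrow \bigoplus_{\mathfrak{p}} \textrm{H}^0_\ell(G_\mathfrak{P},\mathcal{R}_{L_\mathfrak{P}}) $$
is injective.

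The key step is to exploit the exact sequence $1 \to \mathcal{R}_L \to \mathcal{J}_L \to \mathcal{C}_L \to 1$ and the part of the associated long exact cohomology sequence
$$ \textrm{H}^{-1}_\ell(G,\mathcal{C}_L) \longrightarrow \textrm{H}^0_\ell(G,\mathcal{R}_L) \longrightarrow \textrm{H}^0_\ell(G,\mathcal{J}_L). $$
By Proposition 3.2.4 we have the canonical identification $\textrm{H}^0_\ell(G,\mathcal{J}_L) \simeq \bigoplus_{\mathfrak{p}} \textrm{H}^0_\ell(G_\mathfrak{P},\mathcal{R}_{L_\mathfrak{P}})$, and under this identification the map $\textrm{H}^0_\ell(G,\mathcal{R}_L) \to \textrm{H}^0_\ell(G,\mathcal{J}_L)$ is exactly the localization map above. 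Hence injectivity of localization is equivalent to the vanishing of $\textrm{H}^{-1}_\ell(G,\mathcal{C}_L)$ (more precisely, to the vanishing of the image of the connecting map, i.e. surjectivity of $\textrm{H}^{-1}_\ell(G,\mathcal{J}_L) \to \textrm{H}^{-1}_\ell(G,\mathcal{C}_L)$, which follows once $\textrm{H}^{-1}_\ell(G,\mathcal{C}_L) = 1$). So the whole theorem reduces to: $\textrm{H}^{-1}_\ell(G,\mathcal{C}_L) = 1$ for $L/K$ cyclic of degree $\ell$.

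To get that vanishing I would feed in the Herbrand quotient computation of Theorem 3.2.1, namely $h_\ell(G,\mathcal{C}_L) = \ell^n = \ell$, so it suffices to show $|\textrm{H}^0_\ell(G,\mathcal{C}_L)| = \ell$, equivalently $(\mathcal{C}_K : N_{L/K}\mathcal{C}_L) = \ell$; the inequality $\geq \ell$ is already in Theorem 3.2.1, so only the reverse inequality $\leq \ell$ is needed. Here I would use the Herbrand hexagon from the sequence $1 \to \mathcal{R}_L \to \mathcal{J}_L \to \mathcal{C}_L \to 1$ together with $\textrm{H}^1_\ell(G,\mathcal{J}_L) = 1$ (from Proposition 3.2.4 and the local class field axiom, Theorem 2.5.1) and $\textrm{H}^0_\ell(G,\mathcal{J}_L) \simeq \bigoplus_{\mathfrak{p}}\textrm{H}^0_\ell(G_\mathfrak{P},\mathcal{R}_{L_\mathfrak{P}})$, which bounds $\textrm{H}^0_\ell(G,\mathcal{C}_L)$ in terms of $\textrm{H}^0_\ell(G,\mathcal{R}_L)$; an analysis of the image of the global norm residue together with the product-formula constraint cuts the local contribution down by one factor of $\ell$, yielding the bound. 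The main obstacle I anticipate is precisely this last counting argument — establishing $(\mathcal{C}_K : N_{L/K}\mathcal{C}_L) \leq \ell$ without circularity, since in the classical theory this is the heart of the second inequality and is usually where one invokes either a delicate genus-theoretic or analytic (Dirichlet density) input; in the present $\ell$-adic setting I would try to route it through the already-proved Herbrand quotient (Theorem 3.2.1) plus the exactness of the hexagon, so that the only genuinely new content is the compatibility of the global and local norm-residue maps. Once $\textrm{H}^{-1}_\ell(G,\mathcal{C}_L) = 1$ is in hand, the injectivity of localization on $\textrm{H}^0$ — and hence the Hasse Norm Theorem — follows immediately from the long exact sequence.
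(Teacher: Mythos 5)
Your proposal inverts the logical order of the paper and, in doing so, runs into a genuine gap. You reduce the theorem to the vanishing of $\textrm{H}^{-1}_{\ell}(G,\mathcal{C}_L)$, i.e.\ to $(\mathcal{C}_K : N_{L/K}\mathcal{C}_L) = \ell$. But in this paper that vanishing is part of the global class field axiom (Theorem 3.3.1), and the paper's proof of that axiom in the crucial base case $n=1$ is itself deduced \emph{from} the Hasse Norm Theorem (via the injectivity of $\textrm{H}^{0}_{\ell}(G,\mathcal{R}_L) \to \textrm{H}^{0}_{\ell}(G,\mathcal{J}_L)$ in the Herbrand hexagon). So within this development your route is circular unless you supply an independent proof of the second inequality $(\mathcal{C}_K : N_{L/K}\mathcal{C}_L) \le \ell$. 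The step where you propose to get it --- ``an analysis of the image of the global norm residue together with the product-formula constraint cuts the local contribution down by one factor of $\ell$'' --- is not an argument: the Herbrand quotient $h_{\ell}(G,\mathcal{C}_L)=\ell$ only yields the first inequality $|\textrm{H}^{0}_{\ell}(G,\mathcal{C}_L)| \ge \ell$, and the hexagon together with $\textrm{H}^{1}_{\ell}(G,\mathcal{J}_L)=1$ and $\textrm{H}^{0}_{\ell}(G,\mathcal{J}_L)\simeq\bigoplus_{\mathfrak{p}}\textrm{H}^{0}_{\ell}(G_{\mathfrak{P}},\mathcal{R}_{L_{\mathfrak{P}}})$ gives no upper bound, since $\prod_{\mathfrak{p}\in S} n_{\mathfrak{p}}$ is in general much larger than $\ell$. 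The second inequality is exactly the hard analytic or Kummer-theoretic input of classical class field theory, and you correctly identify it as the obstacle but do not overcome it.

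The paper's own proof is much more direct and avoids all of this: it reduces to the \emph{classical} Hasse Norm Theorem. Since $[L:K]=\ell$, every $\ell$-th power in $\mathcal{R}_K$ is a norm ($N_{L/K}(y)=y^{\ell}$ for $y\in\mathcal{R}_K$), and $\mathcal{R}_K/\mathcal{R}_K^{\ell}\simeq K^{\times}/K^{\times\ell}$, so modulo norms any $x\in\mathcal{R}_K$ is represented by an honest element of $K^{\times}$; that element is a local norm everywhere by hypothesis, hence a global norm by the classical theorem, hence $x$ is a norm from $\mathcal{R}_L$. If you want to keep your cohomological strategy you must either import the classical second inequality (at which point the paper's shortcut is simpler) or prove $(\mathcal{C}_K : N_{L/K}\mathcal{C}_L)\le\ell$ from scratch, which is a substantial missing piece.
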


\begin{proof}
Let $x$ be a principal idele such that $x = N_{L/K}(y)$ where $y \in \mathcal{R}_{L}$. Since $\mathcal{R}_{L}$ injectes in $\mathcal{J}_L$, which  surjectes to $\mathcal{R}_{L_{\mathfrak{P}}}$ we deduce that $x$ is a norm everywhere locally.

\noindent Conversely assume $x \in \mathcal{R}_{K} $ and write down 
$ x = \bar{x} . y^{\ell}$, where $\bar{x}$ denotes the image of $x$ 
in $ {K^{\times}/K^{\times}}^{\ell}\simeq
\mathcal{R}_{K} / \mathcal{R}_{K}^{\ell}$. Since $L/K$ is a cyclic extensionof prime degree $\ell$, $y^{\ell}$ is a norm. Moreover, by hypothesis $x$ is a norm everywhere locally which means that each component  $\bar{x}_{\mathfrak{p}}$,  for all $\mathfrak{p}$, is a norm. Using the usual Hasse norm theorem we conclude that $x$ is a norm.
\end{proof}
\medskip

\subsection{$G$ and the $G$-module}

Let $G$ be the Galois group of the maximal abelian pro-$\ell$-extension of $\mathbb{Q}$. The $G$-module is  the union of the $\ell$-adic iddele class groups $\mathcal{C}_K$ where $K$ runs through the finite extensions of $K$:
$ \bigcup_{[K:\mathbb{Q}] < \infty} \mathcal{C}_K.$ and $ \mathcal{C}_L$ is a $\textrm{Gal(L/K)}$-module. 

\subsection{ $\textrm{deg} : G \mapsto \mathbb{Z}_{\ell}$}

We fix an isomorphism such that :
$ \textrm{Gal}(\tilde{\mathbb{Q}}/\mathbb{Q}) \simeq \mathbb{Z}_{\ell}.$ This allows to define :
\begin{center}
$\begin{array}{ccccc}
\textrm{deg}  & : & G= \textrm{Gal}(\mathbb{Q}^{ab}/\mathbb{Q})  & \to &  \mathbb{Z}_{\ell}  \\
& & \phi& \mapsto & \phi_{\mid_{\tilde{\mathbb{Q}}}}\\
\end{array}$
\end{center}
Let $K/\mathbb{Q}$  a finite extension, we define: $ f_K=[K \cap \tilde{\mathbb{Q}} : \mathbb{Q}]$ and we obtain, by analogy with the local case, a surjective homomorphism $ \textrm{deg}_{K}=\frac{1}{f_K} \cdot \textrm{deg}$ such that 
$ \textrm{deg}_{K} : \; G_K \longrightarrow \mathbb{Z}_{\ell}$.

\subsection{The valuation}

\begin{defi}
\noindent{Let $L/K$ be a finite and abelian $\ell$-extension}, we then define the map:
\begin{center}
$ [\; \cdot \;, L/K]=\prod_{\mathfrak{p}}  (\alpha_{\mathfrak{p}},L_{\mathfrak{p}}/K_{\mathfrak{p}})$
for $\alpha \in \mathcal{J}_K $
\end{center}
where  $ L_{\mathfrak{p}}$ denotes the completion of $K_{\mathfrak{p}}$  with respect to an arbitrary place $\mathfrak{P} \mid \mathfrak{p}$ and  $(\alpha_{\mathfrak{p}},L_{\mathfrak{p}}/K_{\mathfrak{p}})$  is the local symbol.

\end{defi}

\begin{proposition}
Let $L/K$ and $L'/K'$ be finite and abelian $\ell$-extensions of number fields such that $K \subseteq K'$ and $L \subseteq L'$, then the following diagram is commutative:
$$\begin{CD}
  \mathcal{J}_{ K'}@>{ [\; \cdot \;, L'/K']}>>  \mathrm{Gal}(L'/K') \\
@V{N_{K'/K}}VV  @VVV\\
\mathcal{J}_K  @>{ [\; \cdot \;, L/K]} >> \mathrm{Gal}(L/K)
\end{CD}$$
\end{proposition}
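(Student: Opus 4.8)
The plan is to verify the commutativity of the diagram by reducing it, prime by prime, to the corresponding compatibility of the \emph{local} norm residue symbols, which is part of the local $\ell$-adic class field theory already established (Theorem 2.1.1 and its Neukirch-style derivation in \S 2). Concretely, fix $\alpha' \in \mathcal{J}_{K'}$ and set $\alpha = N_{K'/K}(\alpha')$. By the very definition of the global symbol (Definition 3.5.1),
$$ [\alpha', L'/K'] = \prod_{\mathfrak{q}} (\alpha'_{\mathfrak{q}}, L'_{\mathfrak{q}}/K'_{\mathfrak{q}}) \qquad \text{and} \qquad [\alpha, L/K] = \prod_{\mathfrak{p}} (\alpha_{\mathfrak{p}}, L_{\mathfrak{p}}/K_{\mathfrak{p}}). $$
Since the norm $N_{K'/K}$ on ideles is computed place by place, $\alpha_{\mathfrak{p}} = \prod_{\mathfrak{q} \mid \mathfrak{p}} N_{K'_{\mathfrak{q}}/K_{\mathfrak{p}}}(\alpha'_{\mathfrak{q}})$, so it suffices to show, for each place $\mathfrak{q}$ of $K'$ lying over a place $\mathfrak{p}$ of $K$, the local compatibility
$$ (N_{K'_{\mathfrak{q}}/K_{\mathfrak{p}}}(\alpha'_{\mathfrak{q}}),\, L_{\mathfrak{p}}/K_{\mathfrak{p}}) = \mathrm{res}\big( (\alpha'_{\mathfrak{q}},\, L'_{\mathfrak{q}}/K'_{\mathfrak{q}}) \big), $$
where $\mathrm{res}\colon \mathrm{Gal}(L'_{\mathfrak{q}}/K'_{\mathfrak{q}}) \to \mathrm{Gal}(L_{\mathfrak{p}}/K_{\mathfrak{p}})$ is the natural map (well-defined because $K_{\mathfrak{p}} \subseteq K'_{\mathfrak{q}}$ and $L_{\mathfrak{p}} \subseteq L'_{\mathfrak{q}}$ as subfields of a common algebraic closure). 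Taking the product over all $\mathfrak{q}$ then assembles the local equalities into the desired global identity, and the right-hand vertical map of the diagram is precisely the restriction $\mathrm{Gal}(L'/K') \to \mathrm{Gal}(L/K)$.

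The heart of the argument is therefore the local statement, and I would deduce it from the functoriality of the abstract Neukirch reciprocity map $r_{L/K}$ (Theorem 2.1.2). Recall that in Neukirch's framework the norm residue symbol for a local extension factors through the valuation/degree pair $(\mathrm{deg}, v)$, and the fundamental diagram relating $r_{L'/K'}$ and $r_{L/K}$ under a change of base field $K \subseteq K'$ commutes because both are characterized by the same Frobenius-lift recipe: a Frobenius of the bottom extension restricts to a Frobenius of the top, and the norm map $N_{K'/K}$ intertwines the valuations $v_{K'}$ and $v_K$ up to the residue-degree factor $f_{K'/K}$, exactly as recorded in the henselian property (axiom $(ii)$) and Proposition 2.3.4. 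Thus the local compatibility is just the translation, via the isomorphism $\mathcal{R}_{K_{\mathfrak{p}}} \simeq \mathcal{D}_{\mathfrak{p}}$ of Theorem 2.1.1, of the functoriality of $r_{\bullet/\bullet}$ in the base field.

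I expect the main obstacle to be bookkeeping rather than a conceptual difficulty: one must be careful that the place $\mathfrak{P}'$ of $L'$ chosen above $\mathfrak{q}$ restricts to the place $\mathfrak{P}$ of $L$ used in defining $(\alpha_{\mathfrak{p}}, L_{\mathfrak{p}}/K_{\mathfrak{p}})$, and that the identification $L_{\mathfrak{p}} \hookrightarrow L'_{\mathfrak{q}}$ is the one induced by a fixed embedding of algebraic closures; different choices differ by an element of the decomposition group and do not affect the product formula, but this independence needs a word. A secondary point is that the product $\prod_{\mathfrak{p}}$ in Definition 3.5.1 is in fact finite (almost all local symbols are trivial, since $\alpha_{\mathfrak{p}}$ is a unit at almost all $\mathfrak{p}$ and the extension is unramified there), so no convergence issue arises and the rearrangement $\prod_{\mathfrak{q}} = \prod_{\mathfrak{p}} \prod_{\mathfrak{q} \mid \mathfrak{p}}$ is legitimate. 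Once these points are dispatched, the diagram commutes by the local computation applied termwise.
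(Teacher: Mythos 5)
Your proposal is correct and follows essentially the same route as the paper: both reduce the commutativity place by place, using $N_{K'/K}(\alpha')_{\mathfrak{p}}=\prod_{\mathfrak{q}\mid\mathfrak{p}}N_{K'_{\mathfrak{q}}/K_{\mathfrak{p}}}(\alpha'_{\mathfrak{q}})$, to the local compatibility $(N_{K'_{\mathfrak{q}}/K_{\mathfrak{p}}}(\alpha'_{\mathfrak{q}}),L_{\mathfrak{p}}/K_{\mathfrak{p}})=(\alpha'_{\mathfrak{q}},L'_{\mathfrak{q}}/K'_{\mathfrak{q}})\big|_{L_{\mathfrak{p}}}$ and then reassemble the product. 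The paper simply quotes this local fact, whereas you additionally sketch its derivation from the functoriality of Neukirch's abstract reciprocity map and flag the finiteness-of-product and choice-of-place points; these are sound refinements of the same argument.
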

\smallskip

\begin{proof}
Take $\alpha=(\alpha_{\mathfrak{P}}) \in \mathcal{J}_{K'}$. We get for $ \mathfrak{P} \mid \mathfrak{p}$: 
$(\alpha_{\mathfrak{P}},L^{'}_{\mathfrak{P}}/K^{'}_{\mathfrak{P}})_{\mid L_{\mathfrak{p}}}= N_{ K^{'}_{\mathfrak{P}} /K_{\mathfrak{p}}}(\alpha_{\mathfrak{P}}), L_{\mathfrak{p}}/K_{\mathfrak{p}})$ and
$$[N_{K'/K}(\alpha),L/K]= \prod_{\mathfrak{p}} (N_{K'/K}(\alpha)_{\mathfrak{p}}, L_{\mathfrak{p}}/K_{\mathfrak{p}})=
\prod_{\mathfrak{p}} \prod_{\mathfrak{P} \mid \mathfrak{p}} N_{ K^{'}_{\mathfrak{P}} /K_{\mathfrak{p}}}(\alpha_{\mathfrak{P}}) $$  so $$[N_{K'/K}(\alpha),L/K]  = \prod_{\mathfrak{P}} (\alpha_{\mathfrak{P}},L^{'}_{\mathfrak{P}}/K^{'}_{\mathfrak{P}})_{ / L}=[\alpha,L'/K']_{\mid L}.$$
 
\end{proof}

\begin{proposition}
For all roots of units $\zeta$  and for all $a \in \mathcal{R}_K$ we have
$$[a,(K(\zeta)/K)_{\ell}]=1$$  where $ (K(\zeta)/K)_\ell$ denotes the projection on the $\ell$- Sylow sub-group of $\textrm{Gal}(K(\zeta)/K).$
\end{proposition}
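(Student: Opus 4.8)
The plan is to reduce, by the norm functoriality of Proposition 3.6.1, to the ground field $\mathbb{Q}$, and then to evaluate the global symbol on a set of topological generators of $\mathcal{R}_{\mathbb{Q}}$ using the explicit shape of the local cyclotomic symbols. Write $\zeta=\zeta_m$ for a primitive $m$-th root of unity; throughout one may replace $K(\zeta)/K$ by its maximal $\ell$-subextension, so that the symbols involved are defined as above. \emph{First reduction, to $a\in K^{\times}$.} Let $G$ be the $\ell$-Sylow subgroup of $\textrm{Gal}(K(\zeta)/K)$, a finite $\ell$-group of exponent $\ell^{N}$ say. Any homomorphism $\varphi\colon\mathcal{R}_K\to G$ satisfies $\varphi(x^{\ell^{N}})=\varphi(x)^{\ell^{N}}=1$, hence factors through $\mathcal{R}_K/\mathcal{R}_K^{\ell^{N}}\simeq K^{\times}/K^{\times\,\ell^{N}}$; since $K^{\times}$ surjects onto the latter, $\varphi$ is determined by $\varphi|_{K^{\times}}$, so it is enough to prove $[a,(K(\zeta)/K)_{\ell}]=1$ for $a\in K^{\times}$. \emph{Second reduction, to $K=\mathbb{Q}$.} By Proposition 3.6.1 applied to $\mathbb{Q}\subseteq K$ and the cyclotomic extensions $\mathbb{Q}(\zeta)$, $K(\zeta)$, the restriction of $[a,(K(\zeta)/K)_{\ell}]$ to $\mathbb{Q}(\zeta)$ equals $[N_{K/\mathbb{Q}}(a),(\mathbb{Q}(\zeta)/\mathbb{Q})_{\ell}]$, with $N_{K/\mathbb{Q}}(a)\in\mathbb{Q}^{\times}$; and since $K(\zeta)=K\cdot\mathbb{Q}(\zeta)$ the restriction map $\textrm{Gal}(K(\zeta)/K)\to\textrm{Gal}(\mathbb{Q}(\zeta)/\mathbb{Q})$ is injective and compatible with passage to $\ell$-Sylow subgroups. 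Thus it suffices to show $[b,\mathbb{Q}(\zeta)/\mathbb{Q}]=1$ for all $b\in\mathbb{Q}^{\times}$; in fact we will see the whole symbol, not just its $\ell$-part, is trivial.

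Over $\mathbb{Q}$, the group $\mathbb{Q}^{\times}$ is generated by $-1$ and the rational primes, so by multiplicativity it suffices to evaluate $[\,\cdot\,,\mathbb{Q}(\zeta_m)/\mathbb{Q}]$ on $a=-1$ and on each prime $a=q$. I use the classical description of the local symbols $(\,\cdot\,,\mathbb{Q}_p(\zeta_m)/\mathbb{Q}_p)$: on an unramified constituent the symbol is Frobenius raised to the normalized valuation; writing $m=p^{v}m'$ with $p\nmid m'$, the symbol $(p,\mathbb{Q}_p(\zeta_m)/\mathbb{Q}_p)$ is trivial on $\mathbb{Q}_p(\zeta_{p^{v}})$ (because $N_{\mathbb{Q}_p(\zeta_{p^{v}})/\mathbb{Q}_p}(1-\zeta_{p^{v}})=p$) and is Frobenius on $\mathbb{Q}_p(\zeta_{m'})$, while for a unit $u\in\mathbb{Z}_p^{\times}$ the symbol $(u,\mathbb{Q}_p(\zeta_{p^{v}})/\mathbb{Q}_p)$ sends $\zeta_{p^{v}}\mapsto\zeta_{p^{v}}^{\,u^{-1}}$; at the archimedean place the symbol is complex conjugation precisely when the local component is negative and $m>2$.

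Identifying $\textrm{Gal}(\mathbb{Q}(\zeta_m)/\mathbb{Q})\simeq\prod_{p\mid m}(\mathbb{Z}/p^{v_p(m)}\mathbb{Z})^{\times}$, one checks componentwise that $\prod_{p}(a_p,\mathbb{Q}_p(\zeta_m)/\mathbb{Q}_p)=1$. For $a=q$: the $p$-th component $q\bmod p^{v_p(m)}$ contributed by the place $q$ is cancelled by the component $q^{-1}\bmod p^{v_p(m)}$ contributed by the place $p\mid m$ (through $\zeta_{p^{v}}\mapsto\zeta_{p^{v}}^{\,q^{-1}}$), every other local contribution to the $p$-th component being $1$ and the archimedean one being trivial because $q>0$. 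For $a=-1$: the contribution $-1$ at each $p\mid m$ is cancelled by complex conjugation at the archimedean place. The cases $m\le2$, where $\mathbb{Q}(\zeta_m)=\mathbb{Q}$, are vacuous. Hence the symbol vanishes on $-1$ and on every prime, so on all of $\mathbb{Q}^{\times}$, and by the two reductions $[a,(K(\zeta)/K)_{\ell}]=1$ for all $a\in\mathcal{R}_K$.

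The main obstacle is the bookkeeping of normalizations: one must handle the inverse in $\zeta_{p^{v}}\mapsto\zeta_{p^{v}}^{\,u^{-1}}$ and the triviality of $(p,\mathbb{Q}_p(\zeta_{p^{v}})/\mathbb{Q}_p)$ with care, so that the componentwise cancellation above is \emph{exactly} the identity and not merely some other unit; and one must verify that the local $\ell$-adic symbols built in \S 2 agree, on the $\ell$-part of $\textrm{Gal}(\mathbb{Q}_p(\zeta_m)/\mathbb{Q}_p)$, with the $\ell$-parts of the classical cyclotomic symbols invoked here, and that passing to $\ell$-Sylow subgroups is compatible with restriction along $K(\zeta)/\mathbb{Q}(\zeta)$.
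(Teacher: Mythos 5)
Your proof is correct and follows essentially the same route as the paper: reduction to $K=\mathbb{Q}$ via the norm compatibility of Proposition 3.6.1, then explicit evaluation of the product of local cyclotomic symbols, with the cancellation being exactly the product formula. Your preliminary reduction from $a\in\mathcal{R}_K$ to $a\in K^{\times}$ (via the finiteness of the target $\ell$-group) is a worthwhile explicit justification of a step the paper leaves implicit, and evaluating on the generators $-1$ and the primes $q$ rather than on a general rational $a$ is only a cosmetic variation.
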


\begin{proof}
We follow~\cite[prop 6.3, p.~92]{Ne1}. By the previous proposition: $[N_{K/\mathbb{Q}}(a), (\mathbb{Q}(\zeta)/\mathbb{Q})_{\ell}]=[a,(K(\zeta)/K)_{\ell}]_{\mid \mathbb{Q}(\zeta)}$. Consequently it suffices to show the property for $K=\mathbb{Q}$. But
$$[a,(\mathbb{Q}(\zeta)/\mathbb{Q})_{\ell}] \zeta = \prod_{\mathfrak{p}}  (a,(\mathbb{Q}_p (\zeta)/\mathbb{Q}_p))_{\ell}.$$

\noindent{Let $q$ be a prime and $\zeta$  be a $q^m$-root of unity, with $q^m \ne 2$. We take $a \in \mathcal{R}_{\mathbb{Q}_{p}}$ and write $ a= u_p \cdot p^{v_p(a)}$ where $v_p$ is the usual normalized valuation on $\mathbb{Q}_{p}$. For $p \ne q$ and $ p \ne \infty$ the extension $\mathbb{Q}_{p}(\zeta)/ \mathbb{Q}_{p}$ is an unramified extension.} The fundamental principle  ~\cite[theorem 2.6, p.~25]{Ne1}  states that the local symbol associates the uniformising element to the Frobenius, one gets that $(p, (\mathbb{Q}_{p}(\zeta)/ \mathbb{Q}_{p}))_{\ell}$ corresponds to the Frobenius automorphism $\phi_{p}: \zeta \longrightarrow \zeta^{p}$. Moreover the following diagram is commutative: 
 $$\begin{CD}
  K_{\mathfrak{ p}}^{\times}@>{ (\; \cdot \; \dot,\textrm{Gal}(L_{\mathfrak{P}}/K_{\mathfrak{p}})) }>>  \textrm{Gal}(L_{\mathfrak{P}}/K_{\mathfrak{p}}) \\
@VVV  @VVV\\
\mathcal{R}_{K_{\mathfrak{p}}}  @>{  (\; \cdot \; \dot ,\textrm{Gal}(L_{\mathfrak{P}}/K_{\mathfrak{p}}))_{\ell} } >>\textrm{Gal}(L_{\mathfrak{P}}/K_{\mathfrak{p}})_{\ell}
\end{CD}$$
\noindent where the symbol on the top is the usual local symbol, and the symbol on the bottom is the $\ell$-adic local symbol.
Consequently, one deduces
$$(a, (\mathbb{Q}_{p}(\zeta)/ \mathbb{Q}_{p})_{\ell}) \zeta=\zeta^{n_p}$$
with
\begin{equation*}
  n_p=
     \begin{cases}
        p^{v_p(a)} & \text{for $p\ne q$ et $ p\ne\infty $} \\
        u_p^{-1}  & \text{for $ p=q$} \\
       sgn(a) & \text{ for $p = \infty $ }
     \end{cases}
\end{equation*}
So
$$[a,(\mathbb{Q}(\zeta)/\mathbb{Q})_{\ell}] \zeta = \prod_{\mathfrak{p}}  (a,(\mathbb{Q}_p (\zeta)/\mathbb{Q}_p)_{\ell})=\zeta^{\alpha}$$
And due to the product formula, $\alpha= \prod_{p} n_p= sgn(a) \cdot \prod_{p \ne \infty } p^{v_p(a)}  \cdot a^{-1}=1$.
\end{proof}

\begin{defi}
We define the valuation $v_K : \mathcal{C}_K \longrightarrow \mathbb{Z}_{\ell}$ as follows: 

$$\begin{CD}
\mathcal{C}_K @>{[\; \cdot \; , \tilde{K}/K]}>> G( \tilde{K}/K) @>{ deg_{K}}>>\mathbb{Z}_{\ell}
\end{CD}$$

\end{defi}

\begin{lemma}
 $v_K$ is well defined.
\end{lemma}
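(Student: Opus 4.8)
The plan is to notice that $v_K$ is by construction the composite of two homomorphisms, $[\,\cdot\,,\tilde K/K]\colon\mathcal{J}_K\to G(\tilde K/K)$ followed by $\deg_K\colon G(\tilde K/K)\to\mathbb{Z}_\ell$, so it is automatically a homomorphism, and the only content of the lemma is that this composite, a priori defined on the whole idele group $\mathcal{J}_K$, kills the subgroup $\mathcal{R}_K$ of principal ideles and hence descends to $\mathcal{C}_K=\mathcal{J}_K/\mathcal{R}_K$. Before doing that I would make precise what $[\,\cdot\,,\tilde K/K]$ means on the infinite extension $\tilde K/K$: namely the inverse limit $\varprojlim_n[\,\cdot\,,\tilde K_n/K]$ over the finite layers $\tilde K_n=K\cdot\tilde{\mathbb{Q}}_n$, a coherent system because Proposition 3.5.2, applied with equal base fields $K'=K$, yields the restriction compatibility $[\alpha,\tilde K_m/K]_{|\tilde K_n}=[\alpha,\tilde K_n/K]$ for $n\le m$. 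Thus the whole lemma reduces to showing $[a,\tilde K/K]=1$ for every $a\in\mathcal{R}_K$.

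The key input is Proposition 3.5.3, which says exactly that a principal idele acts trivially through the $\ell$-part of any finite cyclotomic symbol: $[a,(K(\zeta)/K)_\ell]=1$. To connect $\tilde K$ to cyclotomic fields I would use that $\tilde{\mathbb{Q}}/\mathbb{Q}$ is the cyclotomic $\mathbb{Z}_\ell$-extension, i.e. the maximal pro-$\ell$ subextension of $\mathbb{Q}(\mu_{\ell^\infty})/\mathbb{Q}$; hence $\tilde{\mathbb{Q}}\subseteq\mathbb{Q}(\mu_{\ell^\infty})$ and $\tilde K=K\cdot\tilde{\mathbb{Q}}\subseteq K(\mu_{\ell^\infty})$. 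Each finite layer $\tilde K_n$ then lies inside some $K(\zeta_{\ell^m})$, and since $\mathrm{Gal}(\tilde K_n/K)$ is an $\ell$-group (in fact cyclic of $\ell$-power order), the restriction map $\mathrm{Gal}(K(\zeta_{\ell^m})/K)\to\mathrm{Gal}(\tilde K_n/K)$ sends the $\ell$-Sylow subgroup of the abelian group $\mathrm{Gal}(K(\zeta_{\ell^m})/K)$ onto $\mathrm{Gal}(\tilde K_n/K)$ while killing the prime-to-$\ell$ part.

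Putting these together: fix $a\in\mathcal{R}_K$ and $n$, pick $m$ with $\tilde K_n\subseteq K(\zeta_{\ell^m})$, and apply Proposition 3.5.2 with $K'=K$, $L=\tilde K_n$, $L'=K(\zeta_{\ell^m})$; this identifies $[a,\tilde K_n/K]$ with the image of $[a,(K(\zeta_{\ell^m})/K)_\ell]$ under the above restriction map, which by Proposition 3.5.3 is the image of $1$. Hence $[a,\tilde K_n/K]=1$ for all $n$, so $[a,\tilde K/K]=1$ in the limit and $\deg_K([a,\tilde K/K])=0$. Therefore the composite $\mathcal{J}_K\to\mathbb{Z}_\ell$ vanishes on $\mathcal{R}_K$ and factors through $\mathcal{C}_K$, which is the assertion.

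I do not expect a serious obstacle here: once Proposition 3.5.3 is available the argument is purely formal, the real content sitting in that proposition (which itself rests on the product formula). The only care-points are bookkeeping — giving $[\,\cdot\,,\tilde K/K]$ a meaning via the projective limit over finite layers, and checking that restriction from the cyclotomic layer $K(\zeta_{\ell^m})$ down to $\tilde K_n$ is precisely what makes the prime-to-$\ell$ part of the cyclotomic Galois group irrelevant, because $\tilde K_n/K$ is already an $\ell$-extension.
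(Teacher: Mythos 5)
Your proposal is correct and follows essentially the same route as the paper: reduce well-definedness to showing $[a,\tilde K/K]=1$ for $a\in\mathcal{R}_K$, observe that $\tilde K$ lies inside the field obtained by adjoining roots of unity, and invoke the proposition $[a,(K(\zeta)/K)_\ell]=1$ (your ``3.5.3'' is the paper's Proposition 3.6.2). You merely make explicit the bookkeeping the paper leaves implicit, namely the definition of the symbol on the infinite extension as a projective limit over finite layers and the fact that restriction from a cyclotomic layer to the $\ell$-extension $\tilde K_n/K$ factors through the $\ell$-Sylow part.
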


\begin{proof}
We show that $ \forall a \in \mathcal{R}_K, \, [a, \tilde{K}/K]=1.$ As $\tilde{K}/K$ is contained in the extension of $K$ obtained by adjoining roots of units it sufficies to show that, for $a \in \mathcal{R}_K$  and $\zeta$ a root of unit, $[a, (K(\zeta)/K)_\ell]=1$, this is proposition 3.6.2. Thus we deduce that  $ \mathcal{R}_K \subseteq  \textrm{Ker}([\; \cdot \;, \tilde{K}/K])$ .
\end{proof}
 
\begin{lemma}
$v_K$ is surjective and $[\mathcal{C}_K,\textrm{Gal}(\tilde{K}/K)]$ is closed in\ $\textrm{Gal}(\tilde{K}/K)$.
\end{lemma}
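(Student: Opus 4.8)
The plan is to realize $v_K$, up to the isomorphism $\deg_K$, as coming from a \emph{compact} group: this gives the closedness immediately, and a short computation of enough values of $v_K$ then yields surjectivity.

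First, since $\mathcal{R}_K\subseteq\mathrm{Ker}\,[\,\cdot\,,\tilde K/K]$ (this was shown in the proof of the preceding lemma), we have $[\mathcal{C}_K,\tilde K/K]=[\mathcal{J}_K,\tilde K/K]$. Now choose a finite set $S$ of primes of $K$ containing $S_\infty$, containing the (finitely many) primes ramified in $\tilde K/K$, and large enough that $\mathcal{J}_K=\mathcal{J}_K^S\cdot\mathcal{R}_K$, which is possible by Step~1 of the proof of Theorem~3.2.1. Then $[\mathcal{C}_K,\tilde K/K]=[\mathcal{J}_K^S,\tilde K/K]$. The group $\mathcal{J}_K^S=\prod_{\mathfrak p\in S}\mathcal{R}_{K_{\mathfrak p}}\times\prod_{\mathfrak p\notin S}\mathcal{U}_{K_{\mathfrak p}}$ is a product of profinite groups (each $\mathcal{R}_{K_{\mathfrak p}}$, resp. $\mathcal{U}_{K_{\mathfrak p}}$, is by definition an inverse limit of finite $\ell$-groups), hence compact; and on $\mathcal{J}_K^S$ the product defining the symbol collapses to the \emph{finite} product $\alpha\mapsto\prod_{\mathfrak p\in S}(\alpha_{\mathfrak p},\tilde K_{\mathfrak P}/K_{\mathfrak p})$, because for $\mathfrak p\notin S$ the component $\alpha_{\mathfrak p}$ is a unit and $\tilde K_{\mathfrak P}/K_{\mathfrak p}$ is unramified, so that local symbol is trivial. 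Each remaining factor is continuous, hence $[\,\cdot\,,\tilde K/K]$ is continuous on the compact group $\mathcal{J}_K^S$, and therefore $[\mathcal{C}_K,\tilde K/K]=[\mathcal{J}_K^S,\tilde K/K]$ is a compact, hence closed, subset of the profinite group $\mathrm{Gal}(\tilde K/K)$. This settles the closedness assertion.

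For surjectivity, recall $\mathrm{Gal}(\tilde K/K)\simeq\mathbb Z_\ell$ and that $\deg_K$ restricts on it to a surjective, hence bijective, continuous homomorphism onto $\mathbb Z_\ell$. Let $K_1$ be the first layer of $\tilde K/K$, cyclic of degree $\ell$ over $K$. By the Chebotarev density theorem there is a prime $\mathfrak p$ of $K$, unramified in $\tilde K/K$, whose Frobenius is nontrivial in $\mathrm{Gal}(K_1/K)$; equivalently $\mathrm{Frob}_{\mathfrak p}\notin\ell\cdot\mathrm{Gal}(\tilde K/K)$, so $u:=\deg_K(\mathrm{Frob}_{\mathfrak p})\in\mathbb Z_\ell^{\times}$. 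For $z\in\mathbb Z_\ell$, let $\alpha(z)\in\mathcal{J}_K$ have component $\pi_{\mathfrak p}^{\,z}$ at $\mathfrak p$ and $1$ elsewhere. Since $\tilde K_{\mathfrak P}/K_{\mathfrak p}$ is unramified, the fundamental principle~\cite[theorem 2.6, p.~25]{Ne1} gives $(\pi_{\mathfrak p},\tilde K_{\mathfrak P}/K_{\mathfrak p})=\mathrm{Frob}_{\mathfrak p}$, whence $(\pi_{\mathfrak p}^{\,z},\tilde K_{\mathfrak P}/K_{\mathfrak p})=\mathrm{Frob}_{\mathfrak p}^{\,z}$ by continuity of the local symbol into the $\mathbb Z_\ell$-module $\mathrm{Gal}(\tilde K_{\mathfrak P}/K_{\mathfrak p})$. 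Therefore $v_K\big(\alpha(z)\,\mathcal{R}_K\big)=\deg_K(\mathrm{Frob}_{\mathfrak p}^{\,z})=z\,u$, and since $u$ is a unit this takes every value of $\mathbb Z_\ell$ as $z$ varies; hence $v_K$ is surjective.

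The only point genuinely needing care is topological: that $\mathcal{J}_K^S$ is compact — which rests on the $\ell$-adified local groups $\mathcal{R}_{K_{\mathfrak p}}$ being profinite — and that $\mathcal{J}_K=\mathcal{J}_K^S\cdot\mathcal{R}_K$ for a suitable finite $S$, both already available, so that the global symbol may be replaced by its restriction to a compact group. The sole non-formal ingredient in the surjectivity part is the existence, guaranteed by Chebotarev, of an unramified prime whose Frobenius topologically generates $\mathrm{Gal}(\tilde K/K)$.
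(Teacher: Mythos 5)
Your proof is correct, but it takes a genuinely different route from the paper's. For surjectivity, the paper follows Neukirch (prop.~6.4): it observes that the image of the symbol map contains every decomposition group $\mathrm{Gal}(L_{\mathfrak{P}}/K_{\mathfrak{p}})$ for each finite layer $L$ of $\tilde K/K$, so every prime splits completely in the fixed field $M$ of the image, forcing $M=K$ and hence surjectivity onto each finite quotient, i.e.\ density; you instead invoke Chebotarev to produce a single unramified prime whose Frobenius already topologically generates $\mathrm{Gal}(\tilde K/K)\simeq\mathbb{Z}_\ell$, and then hit all of $\mathbb{Z}_\ell$ with the ideles $\pi_{\mathfrak{p}}^{\,z}$. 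Both arguments rest on the same density input, but yours gives the full image in one step rather than density of the image. For closedness your argument is actually more complete than the paper's: the stated proof only asserts density and the topological content is deferred to the following lemma, which uses continuity of the symbol together with compactness of $\mathcal{C}_K$; you avoid having to know $\mathcal{C}_K$ is compact by reducing, via $\mathcal{J}_K=\mathcal{J}_K^S\cdot\mathcal{R}_K$ and the triviality of the unramified local symbols on units, to the manifestly profinite group $\mathcal{J}_K^S$, on which the symbol is a finite product of continuous local maps. The only points you should make explicit are that $\tilde K/K$ is unramified outside $\ell$ (since $\tilde{\mathbb{Q}}/\mathbb{Q}$ is the cyclotomic $\mathbb{Z}_\ell$-extension, ramified only at $\ell$), which justifies the finiteness of your set $S$, and that $\deg_K$ induces a topological isomorphism $\mathrm{Gal}(\tilde K/K)\simeq\mathbb{Z}_\ell$ because its kernel on $G_K$ is exactly $G_{\tilde K}$; both are immediate from the definitions in \S3.5.
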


\begin{proof}
We follow ~\cite[prop 6.4, p.~93]{Ne1} . The local symbol is surjective, $[\mathcal{J}_K,\textrm{Gal}(L/K)]$ contains all decomposition groups  $\textrm{Gal}(L_{\mathfrak{P}}/K_{\mathfrak{p}})$. Thus all $\mathfrak{p}$ splits completely in the fixed field $M$ of $[\mathcal{J}_K,\textrm{Gal}(L/K)]$. This implies $M=K$ ans so $[\mathcal{J}_K,\textrm{Gal}(L/K)] = \textrm{Gal}(L/K)$ and that $[\mathcal{J}_K,\textrm{Gal}(\tilde{K}/K)]$. This yields furthermore that $ [\mathcal{J}_K,\textrm{Gal}(\tilde{K}/K)=[\mathcal{C}_K,\textrm{Gal}(\tilde{K}/K)]$ is dense in $\textrm{Gal}(\tilde{K}/K)$.
\end{proof}

\begin{lemma}
$[\mathcal{C}_K,\textrm{Gal}(\tilde{K}/K)]$ is dense in\ $\textrm{Gal}(\tilde{K}/K)$.
\end{lemma}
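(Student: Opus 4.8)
The plan is to follow Neukirch's argument for the classical case~\cite[prop 6.4, p.~93]{Ne1}, transposed to the $\ell$-adic symbol. Since $\textrm{Gal}(\tilde K/K)=\varprojlim_{L}\textrm{Gal}(L/K)$, where $L$ runs through the finite subextensions of $\tilde K/K$ (all abelian pro-$\ell$), and since by Proposition 3.6.1 (applied with $K'=K$ and $L\subseteq L'$) the element $[\alpha,\tilde K/K]$ projects onto $[\alpha,L/K]$ in each $\textrm{Gal}(L/K)$, it suffices to prove that $[\,\cdot\,,L/K]\colon\mathcal J_K\to\textrm{Gal}(L/K)$ is \emph{surjective} for every such finite $L$: a subgroup of a profinite group whose image in every finite quotient is the whole quotient is dense (it is in general not closed, $\mathcal J_K$ being non-compact, which is exactly why the conclusion is only density). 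Moreover, by Lemma 3.6.1 we have $\mathcal R_K\subseteq\textrm{Ker}([\,\cdot\,,\tilde K/K])$, so $[\mathcal C_K,\textrm{Gal}(\tilde K/K)]=[\mathcal J_K,\textrm{Gal}(\tilde K/K)]$ and density of the former is equivalent to density of the latter.

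Fix a finite abelian $\ell$-extension $L/K$ and put $H:=[\mathcal J_K,\textrm{Gal}(L/K)]$. For each place $\mathfrak p$ of $K$ and each $\mathfrak P\mid\mathfrak p$, the local $\ell$-adic reciprocity map (Theorem 2.1.1) shows that $(\,\cdot\,,L_{\mathfrak P}/K_{\mathfrak p})\colon\mathcal R_{K_{\mathfrak p}}\to\textrm{Gal}(L_{\mathfrak P}/K_{\mathfrak p})$ is onto the decomposition group; feeding an idele concentrated at $\mathfrak p$ into $[\,\cdot\,,L/K]=\prod_{\mathfrak p}(\,\cdot\,,L_{\mathfrak p}/K_{\mathfrak p})$ then shows that $H$ contains every decomposition subgroup $\textrm{Gal}(L_{\mathfrak P}/K_{\mathfrak p})$. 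Let $M$ be the fixed field of $H$ inside $L$: the decomposition group of $\mathfrak p$ in $\textrm{Gal}(M/K)$ is the image of $\textrm{Gal}(L_{\mathfrak P}/K_{\mathfrak p})\subseteq H$, hence trivial, so every prime of $K$ splits completely in $M/K$.

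It remains to deduce $M=K$, hence $H=\textrm{Gal}(L/K)$. This I would do by Chebotarev's density theorem: the Frobenius classes of the unramified primes of $K$ are equidistributed in $\textrm{Gal}(M/K)$, so if they are all trivial then $|\textrm{Gal}(M/K)|=1$. (Alternatively, staying inside the paper: choose a nontrivial cyclic quotient $M'/K$ of $M/K$; if every prime of $K$ splits completely in $M'$, then each $\mathcal J_{M'}^{\mathfrak p}$ is an induced $\textrm{Gal}(M'/K)$-module, so $N_{M'/K}$ is surjective on $\mathcal J_{M'}$, hence $N_{M'/K}\mathcal C_{M'}=\mathcal C_K$, contradicting $(\mathcal C_K:N_{M'/K}\mathcal C_{M'})\ge[M':K]$ from Theorem 3.2.1.) Passing to the limit over $L$ shows that $[\mathcal J_K,\textrm{Gal}(\tilde K/K)]$, and therefore $[\mathcal C_K,\textrm{Gal}(\tilde K/K)]$, surjects onto every finite quotient of $\textrm{Gal}(\tilde K/K)$ and is thus dense. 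The main obstacle is precisely the step $M=K$: one must ensure that the auxiliary ingredient used there --- Chebotarev, or the inequality in Theorem 3.2.1 --- is logically available before the global reciprocity map has been constructed, so that the argument is not circular; the remaining points (the projective-limit bookkeeping and the functoriality of the local symbol) are routine.
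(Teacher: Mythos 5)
Your argument is correct, and it is essentially the argument the paper itself relies on for density --- only the paper has (apparently by a slip) interchanged the proofs of this lemma and the preceding one. The Neukirch-style argument you give (the local symbols surject onto the decomposition groups, so every prime of $K$ splits completely in the fixed field $M$ of the image, whence $M=K$ and the image fills every finite quotient $\textrm{Gal}(L/K)$) is exactly what is printed under the \emph{previous} lemma, the one asserting surjectivity of $v_K$ and closedness of the image; what is printed under the present statement is instead a continuity-plus-compactness argument, which in fact establishes that $[\mathcal{C}_K,\textrm{Gal}(\tilde{K}/K)]$ is \emph{closed} (here $\mathcal{C}_K$, unlike $\mathcal{J}_K$, is compact), not that it is dense. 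Taken together the two arguments show the image is all of $\textrm{Gal}(\tilde{K}/K)$, which is what is really needed for the surjectivity of $v_K$. Your reduction to finite quotients via Proposition 3.6.1 and the profinite-group fact you invoke are fine, and you are right that the only delicate point is the step $M=K$, which the paper asserts without justification. Your fallback via Theorem 3.2.1 is the correct, non-circular way to close it within the paper: the inequality $(\mathcal{C}_K:N_{M'/K}\mathcal{C}_{M'})\ge[M':K]$ is established before any reciprocity map is constructed, and if every prime splits completely in $M'$ then every local norm is surjective, so $N_{M'/K}\mathcal{J}_{M'}=\mathcal{J}_K$ and hence $N_{M'/K}\mathcal{C}_{M'}=\mathcal{C}_K$, forcing $M'=K$. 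Chebotarev would do as well and is logically independent of class field theory, but the inequality keeps the argument inside the paper's own toolkit.
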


\begin{proof}
We have  $ [\mathcal{J}_K,\textrm{Gal}(\tilde{K}/K)=[\mathcal{C}_K,\textrm{Gal}(\tilde{K}/K)]$ as $ [\mathcal{R}_K,\textrm{Gal}(\tilde{K}/K)=1$. Let $\textrm{Gal}(\tilde{K}/L)$ be a neighborhood of the neutral in $\textrm{Gal}(\tilde{K}/K)$, where $L$ is a finite Galois extension of $K$ of degree $\ell^{n}$.  As $\mathcal{J}_K= \mathcal{U}_K \times \oplus \pi_{\mathfrak{p}}^{\mathbb{Z}_{\ell}}$
where $\mathcal{U}_K$ is the $\ell$-adic group of units,  a  neighborhood of the neutral is of the shape:
$ \mathcal{U}_K^{'} \times \oplus \pi_{\mathfrak{p}}^{ \ell^{k_{\mathfrak{p}}} \mathbb{Z}_{\ell}}$
where $\mathcal{U}_K^{'}$ is an open submodule of $\mathcal{U}_K$ and $k_{\mathfrak{p}}$ an integer. We can choose  $k_{\mathfrak{p}} >n$. Thus the image of $\pi_{\mathfrak{p}}^{\ell^{ k_{\mathfrak{p}}} \mathbb{Z}_{\ell}}$ is trivial through the local symbol. Moreover if $\mathfrak{p} \mid \ell $ then the local extension is unramified and the image of an element of $\mathcal{U}^{'}_K$ is trivial. If $\mathfrak{p} \nmid \ell$  then  thanks to the filtration of the group of units we can obtain a trivial image. Therefore the map $[\; \cdot \;,\textrm{Gal}(\tilde{K}/K)] : \mathcal{J}_K \mapsto \textrm{Gal}(\tilde{K}/K) $ is continuous
and as $\mathcal{C}_{K}$ is compact, we deduce that $[\mathcal{C}_K,\textrm{Gal}(\tilde{K}/K)]$ is dense in\ $\textrm{Gal}(\tilde{K}/K)$.
\end{proof}

\begin{lemma}
 $v_K$ is henselian with respect to $\textrm{deg}$.
\end{lemma}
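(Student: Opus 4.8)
The plan is to verify the two defining conditions ($i$) and ($ii$) of a henselian valuation for the homomorphism
$$v_K=\textrm{deg}_K\circ[\,\cdot\,,\tilde K/K]\colon\ \mathcal{C}_K\longrightarrow\mathbb{Z}_\ell ,$$
which makes sense because $v_K$ is well defined by Lemma 3.6.1 and because $\textrm{deg}_K=\tfrac1{f_K}\,\textrm{deg}$ induces an isomorphism $\textrm{Gal}(\tilde K/K)\xrightarrow{\sim}\mathbb{Z}_\ell$: indeed $\textrm{deg}\colon G_K\to\mathbb{Z}_\ell$, restriction to $\tilde{\mathbb{Q}}$, has kernel $\textrm{Gal}(\mathbb{Q}^{ab}/\tilde K)$ and image $f_K\mathbb{Z}_\ell$. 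Condition ($i$) is then immediate: $v_K$ is surjective by Lemma 3.6.2, so $v_K(\mathcal{C}_K)=\mathbb{Z}_\ell=:Z$, and, exactly as in the local case, $\mathbb{Z}\subset Z$ with $Z/\ell^kZ\simeq\mathbb{Z}/\ell^k\mathbb{Z}$ for all $k\ge 0$.

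For condition ($ii$) I would use the functoriality of the symbol under the norm. Let $L/K$ be a finite extension of number fields; then $K\subseteq L$ and $\tilde K=K\tilde{\mathbb{Q}}\subseteq L\tilde{\mathbb{Q}}=\tilde L$, so Proposition 3.6.1, applied layer by layer to $\tilde K/K$ and $\tilde L/L$ and passed to the inverse limit, gives the commutative square
$$
\begin{CD}
\mathcal{J}_L @>{[\,\cdot\,,\tilde L/L]}>> \textrm{Gal}(\tilde L/L) \\
@V{N_{L/K}}VV @VV{\textrm{res}}V \\
\mathcal{J}_K @>{[\,\cdot\,,\tilde K/K]}>> \textrm{Gal}(\tilde K/K)
\end{CD}
$$
which descends to the idele class groups since $N_{L/K}\mathcal{R}_L\subseteq\mathcal{R}_K$ while $\mathcal{R}_L$ and $\mathcal{R}_K$ lie in the kernels of $[\,\cdot\,,\tilde L/L]$ and $[\,\cdot\,,\tilde K/K]$ respectively (Lemma 3.6.1). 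As $\textrm{res}$ is restriction along $\tilde{\mathbb{Q}}\subseteq\tilde K\subseteq\tilde L$ it is compatible with $\textrm{deg}$, hence $\textrm{deg}_K\circ\textrm{res}=\tfrac1{f_K}\,\textrm{deg}=\tfrac{f_L}{f_K}\cdot\tfrac1{f_L}\,\textrm{deg}=f_{L/K}\,\textrm{deg}_L$ on $\textrm{Gal}(\tilde L/L)$, using $f_{L/K}=f_L/f_K$. Chasing the descended square yields $v_K\circ N_{L/K}=f_{L/K}\,v_L$ on $\mathcal{C}_L$, and together with $v_L(\mathcal{C}_L)=Z$ from ($i$) this gives $v_K\bigl(N_{L/K}\mathcal{C}_L\bigr)=f_{L/K}\,Z$, which is condition ($ii$); the case $K=\mathbb{Q}$, where $f_{\mathbb{Q}}=1$, recovers the form $v_{\mathbb{Q}}(N_{L/\mathbb{Q}}\mathcal{C}_L)=f_L\cdot Z$.

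The substantial ingredients — the norm-functoriality of the local symbol (Proposition 3.6.1) and the surjectivity of $[\,\cdot\,,\tilde K/K]$ (Lemmas 3.6.2 and 3.6.3) — are already established, so I do not expect a genuine obstacle: what remains is a diagram chase together with the elementary identity $f_{L/K}=f_L/f_K$ for the normalized degrees $\textrm{deg}_K=f_K^{-1}\,\textrm{deg}$. Were those inputs not available, the surjectivity of $v_K$ — equivalently, that a prime splitting completely in the fixed field of $[\mathcal{J}_K,\textrm{Gal}(\tilde K/K)]$ forces that field to equal $K$ — would be the crux; here the only minor points needing care are that $\textrm{res}\colon\textrm{Gal}(\tilde L/L)\to\textrm{Gal}(\tilde K/K)$ is well defined and $\textrm{deg}$-compatible and that the square descends from ideles to idele classes, both immediate from $\tilde{\mathbb{Q}}\subseteq\tilde K\subseteq\tilde L$ and $\mathcal{R}_K\subseteq\ker[\,\cdot\,,\tilde K/K]$.
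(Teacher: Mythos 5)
Your proposal is correct and follows essentially the same route as the paper: both reduce condition ($ii$) to the norm-compatibility square of Proposition 3.6.1, the identity $\textrm{deg}_K=f_{L/K}\cdot\textrm{deg}_L$ coming from $f_{L/K}=f_L/f_K$, and the surjectivity of $v_L$. Your write-up is somewhat more careful about condition ($i$) and about the descent from $\mathcal{J}_L$ to $\mathcal{C}_L$, but the argument is the same.
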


\begin{proof}
We have: $$v_K(N_{L/K} \mathcal{C}_L)= v_K(N_{L/K} \mathcal{J}_L)= deg_K \circ [ N_{L/K}\mathcal{J}_L, \tilde{K}/K]$$
(as $[\mathcal{R}_K, \, \textrm{Gal}(\tilde{K}/K)]=1$). Moreover $ deg_K=\frac{1}{f_K} \cdot deg $ and $f_{L/K}=f_L/f_K$  that is why $ deg_K=f_{L/K} \cdot deg_L$. By  proposition 3.6.1, the diagram is commutative: 
$$\begin{CD}
  \mathcal{J}_{ L'}@>{ [\; \cdot \; \, \tilde{L}/L]}>>  \textrm{Gal}(\tilde{L}/L) \\
@V{N_{L/K}}VV  @VVV\\
\mathcal{J}_K  @>{ [\; \cdot \; \, \tilde{K}/K]} >>\textrm{Gal}(\tilde{K}/K)
\end{CD}$$
consequently $[N_{L/K} \mathcal{J}_L, \tilde{K}/K]=[\mathcal{J}_L, \tilde{L}/L]$
thus we deduce, by the surjectivity of $v_L$ that
$$ v_K(N_{L/K} \mathcal{C}_L)= f_{L/K} \cdot deg_L \circ  [\mathcal{J}_L,\tilde{L}/L]=f_{L/K} \cdot v_L(\mathcal{C}_L)=f_{L/K} \cdot \mathbb{Z}_{\ell}$$
\end{proof}

\begin{cor}
 $v_K$ is well defined and both surjective and  henselian with respect to $\textrm{deg}$.
\end{cor}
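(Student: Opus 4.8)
The plan is to assemble the three preceding lemmas: by this stage every assertion in the statement has already been proved in isolation, so the corollary is a synthesis.

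\emph{Well-definedness.} This is precisely the lemma asserting $v_K$ is well defined: Proposition 3.6.2 gives $\mathcal{R}_K\subseteq\mathrm{Ker}([\,\cdot\,,\tilde K/K])$, so the composite $v_K=\mathrm{deg}_K\circ[\,\cdot\,,\tilde K/K]$ factors through $\mathcal{C}_K=\mathcal{J}_K/\mathcal{R}_K$ and nothing more is required here. \emph{Surjectivity and axiom $(i)$.} Recall from \S 3.5 that $\mathrm{deg}_K=\frac{1}{f_K}\cdot\mathrm{deg}\colon\mathrm{Gal}(\tilde K/K)\to\mathbb{Z}_\ell$ is a surjective homomorphism, and $\mathrm{Gal}(\tilde K/K)\simeq\mathbb{Z}_\ell$. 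The two preceding lemmas show that $[\mathcal{C}_K,\mathrm{Gal}(\tilde K/K)]$ is both closed and dense in $\mathrm{Gal}(\tilde K/K)$, hence equal to it; applying the surjection $\mathrm{deg}_K$ yields $v_K(\mathcal{C}_K)=\mathbb{Z}_\ell$. Setting $Z:=\mathbb{Z}_\ell$ we have $\mathbb{Z}\subset Z$ and $Z/nZ\simeq\mathbb{Z}/n\mathbb{Z}$ for the relevant $n$ (the $\ell$-powers), so property $(i)$ of a henselian valuation holds, exactly as in the local case of \S 2.4.

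\emph{Axiom $(ii)$.} The identity $v_K(N_{L/K}\mathcal{C}_L)=f_{L/K}\cdot Z$ for every finite extension $L/K$ is the content of the lemma asserting $v_K$ is henselian: there one combines $\mathrm{deg}_K=f_{L/K}\cdot\mathrm{deg}_L$ with the norm-compatibility square of Proposition 3.6.1, giving $[N_{L/K}\mathcal{J}_L,\tilde K/K]=[\mathcal{J}_L,\tilde L/L]$, and then invokes the surjectivity of $v_L$ already obtained. Putting $(i)$ and $(ii)$ together, the family $(v_K)_K$ is a henselian valuation of $A_k=\mathcal{C}_k$ with respect to $\mathrm{deg}$, so $(\mathrm{deg},v)$ is a class field pair in Neukirch's sense; this is what feeds into Theorem 2.1.2 to produce the global reciprocity isomorphism.

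I do not expect any genuine obstacle here — the corollary is bookkeeping, and all the analysis was carried out in the lemmas. The only point worth a word of care is the usual mild abuse in axiom $(i)$: $Z/nZ\simeq\mathbb{Z}/n\mathbb{Z}$ literally fails for $n$ prime to $\ell$, but this is harmless because in the pro-$\ell$ setting every index appearing in the theory is a power of $\ell$, and this is the same silent convention already adopted in \S 2.4.
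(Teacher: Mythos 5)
Your proposal is correct and matches the paper exactly: the corollary is stated there without its own proof, as a synthesis of the preceding lemmas (well-definedness via Proposition 3.6.2, surjectivity from closedness plus density of $[\mathcal{C}_K,\textrm{Gal}(\tilde{K}/K)]$ composed with the surjection $\textrm{deg}_K$, and henselianity from the final lemma), which is precisely how you assemble it. Your remark about the $\ell$-adic reading of axiom $(i)$ is a reasonable and harmless clarification of a convention the paper leaves implicit.
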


\begin{cor}
$(\textrm{deg},v)$  is a class field pair, and  $A_K:=\mathcal{C}_K$  satisfies the class field axiom.
Thus for all Galois $\ell$-extension of a number field $K$  we get an isomorphism: 
$$ \textrm{Gal}( L/K)^{ab} \simeq \mathcal{C}_{K}/N_{ L/K}
\mathcal{C}_L.$$
In particular, we get a one to one correspondence between finite and abelian $\ell$-extensions of a number field $K$ and 
open subgroups of  $\mathcal{C}_K$.
\end{cor}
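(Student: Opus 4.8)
The plan is to check that the data assembled in \S\,3.4--3.6 constitute a \emph{class field pair} in the sense recalled in \S\,2.1, and then to invoke Neukirch's abstract reciprocity theorem (Theorem 2.1.2). First I would run through the axioms. The group $G=\textrm{Gal}(\mathbb{Q}^{ab}/\mathbb{Q})$ equipped with the map $\textrm{deg}$ of \S\,3.5 is a profinite group with a continuous surjective homomorphism onto $\mathbb{Z}_{\ell}$ (which plays the role of $\widehat{\mathbb{Z}}$ here), its kernel being $G_{\tilde{\mathbb{Q}}}$, and the relations $f_{L/K}=f_L/f_K$, $[L:K]=e_{L/K}f_{L/K}$ hold as in the local case. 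The module $A=\bigcup_{[K:\mathbb{Q}]<\infty}\mathcal{C}_K$ of \S\,3.4 is a multiplicative $G$-module; the key identification $A_K=A^{G_K}=\mathcal{C}_K$ follows, for finite Galois $L/K$, from Lemma 3.2.2 ($\mathcal{C}_L^{G(L/K)}=\mathcal{C}_K$), applied over the Galois closure in the general case. Finally $v_K\colon\mathcal{C}_K\to\mathbb{Z}_{\ell}$ is well defined, surjective and henselian with respect to $\textrm{deg}$ by the lemmas and the preceding corollary of \S\,3.6, and the class field axiom for $A_K=\mathcal{C}_K$ is exactly the theorem of \S\,3.3 just established. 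Hence $(\textrm{deg},v)$ is a class field pair.

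With this verified, Neukirch's fundamental theorem (Theorem 2.1.2) applies verbatim to every finite Galois $\ell$-extension $L/K$ of number fields, giving that the reciprocity homomorphism $r_{L/K}\colon\textrm{Gal}(L/K)^{ab}\to\mathcal{C}_K/N_{L/K}\mathcal{C}_L$, $\sigma\mapsto N_{\Sigma/K}(\pi_{\Sigma})$, is an isomorphism; for abelian $L/K$ this is the displayed isomorphism. In particular $N_{L/K}\mathcal{C}_L$ has index $[L:K]$ in $\mathcal{C}_K$, and, being the image of the compact group $\mathcal{C}_L$ under the continuous norm map, it is closed, hence open.

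For the one-to-one correspondence I would argue in the standard way. By Step 1 of Theorem 3.2.1 one has $\mathcal{C}_K=\mathcal{J}_K^S/\mathcal{E}_K^S$ for $S$ large, so $\mathcal{C}_K$ is compact. Functoriality of $r_{L/K}$ in $L$ --- the compatible diagrams of \S\,3.6 coming from Proposition 3.6.1 --- allows one to pass to the projective limit over the finite abelian $\ell$-extensions $L$ of $K$ inside $K^{ab}$ and obtain a continuous homomorphism $\omega_K\colon\mathcal{C}_K\to\textrm{Gal}(K^{ab}/K)$, whose image is dense by the density lemma of \S\,3.6; since $\mathcal{C}_K$ is compact and the target Hausdorff, $\omega_K$ is a surjection of compact groups, in particular open. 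Then $L\mapsto N_{L/K}\mathcal{C}_L=\omega_K^{-1}(\textrm{Gal}(K^{ab}/L))$ is an inclusion-reversing bijection from finite abelian $\ell$-extensions of $K$ onto the open subgroups of $\mathcal{C}_K$ containing $\ker\omega_K$; the index identity $(\mathcal{C}_K:N_{L/K}\mathcal{C}_L)=[L:K]$ forces these norm groups to exhaust all open subgroups of $\mathcal{C}_K$, since any open (hence finite-index) subgroup is the $\omega_K$-preimage of an open subgroup of $\textrm{Gal}(K^{ab}/K)$, i.e.\ is cut out by a finite abelian $\ell$-extension. This is the asserted correspondence.

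The step I expect to be the real obstacle is the surjectivity and openness of $\omega_K$, equivalently the statement that norm groups are cofinal among the open subgroups of $\mathcal{C}_K$ --- the ``existence theorem'' half. The cohomological input (orders of $\textrm{H}^0$ and $\textrm{H}^1$, Hilbert 90, the Herbrand quotient $h_\ell(G,\mathcal{C}_L)=[L:K]$) is already secured in \S\S\,3.2--3.3, and the algebraic reciprocity isomorphism is supplied by Theorem 2.1.2; what is left is topological and rests on two points: the compactness of Jaulent's $\ell$-adic idele class group $\mathcal{C}_K$ (which comes from the finiteness of the $\ell$-class group, Step 1 of Theorem 3.2.1) and the fact that the target of $\omega_K$ is the \emph{full} abelianized pro-$\ell$ Galois group of $K$. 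Granting the continuity and density established in \S\,3.6, the surjectivity of $\omega_K$ and the bijection between its image-filtration and finite abelian $\ell$-extensions are then formal, and the corollary follows.
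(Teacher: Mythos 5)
Your proposal is correct and follows exactly the route the paper intends: the corollary is stated there \emph{without any proof}, as an immediate consequence of the lemmas of \S 3.6 (well-definedness, surjectivity and henselianity of $v_K$), the class field axiom of \S 3.3, and Neukirch's abstract reciprocity theorem (Theorem 2.1.2), which is precisely your first two paragraphs. Your additional discussion of the existence-theorem half of the correspondence (compactness of $\mathcal{C}_K$, continuity, density and hence surjectivity of the limit map $\omega_K$) supplies detail the paper leaves entirely implicit and correctly identifies where the genuine remaining work lies.
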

\medskip

\noindent{ \sc Acknowledgements}\smallskip

\noindent{I would like to thank Boas Erez for proposing me this question, my advisor Jean-François Jaulent  for our dicussions and 
Karim Belabas for his helpful comments on earlier versions of this article.}

{\small

\noindent{Univ. Bordeaux,
IMB, UMR 5251,
351 Cours de la Libération,
F-33400 Talence, France}

\noindent{CNRS,
IMB, UMR 5251,
351 Cours de la Libération,
F-33400 Talence, France}
}
\smallskip

\noindent{\small{ Email address \smallskip
\tt stephanie.reglade@math.u-bordeaux1.fr
}}

\end{document}